\theoremstyle{plain}
\newtheorem{theorem}{Theorem}[section]
\newtheorem{lemma}[theorem]{Lemma}
\newtheorem{corollary}[theorem]{Corollary}
\theoremstyle{definition}
\numberwithin{equation}{section}
\newcommand{\ve}{\vert}
\newcommand{\Ve}{\Vert}
\renewcommand{\d}{\mathrm{d}}
\title[]{Harnack inequality for Nonlocal operators on Manifolds with nonnegative curvature}
\author{Jongmyeong Kim}
\address{Department of Mathematical Sciences, Seoul National University, Seoul 08826, Republic of Korea}
\email{saw2132@snu.ac.kr}
\author{Minhyun Kim}
\address{Fakult\"at f\"ur Mathematik, Universit\"at Bielefeld, 33615 Bielefeld, Germany}
\email{minhyun.kim@uni-bielefeld.de}
\author{Ki-Ahm Lee}
\address{Department of Mathematical Sciences, Seoul National University, Seoul 08826, Republic of Korea \& Korea Institute for Advanced Study, Seoul 02455, Republic of Korea}
\email{kiahm@snu.ac.kr}
\subjclass[2010]{35B65, 35J60, 47G20, 58J05.}
\begin{document}

\begin{abstract}
We establish the Krylov–Safonov Harnack inequalities and H\"older estimates for fully nonlinear nonlocal operators of non-divergence form on Riemannian manifolds with nonnegative sectional curvatures. To this end, we first define the nonlocal Pucci operators on manifolds that give rise to the concept of non-divergence form operators. We then provide the uniform regularity results for these operators which recover the classical results for second order local operators as limits.
\end{abstract}

\maketitle

\section{Introduction}

This paper is concerned with the Harnack inequalities and H\"older estimates for nonlocal equations on Riemannian manifolds with nonnegative curvature. The Harnack inequalities and H\"older estimates for second order local operators have been studied extensively on Riemannian manifolds. We refer the reader to \cite{Yau75, CY75, SC92} for second order operators of divergence form and \cite{Cab97, Kim04, WZ13, KKL14, KL14} for second order operators of non-divergence form. As nonlocal operators have attracted the attention, some of these results have been extended to nonlocal operators in various contexts. For example, the Harnack inequalities and H\"older estimates were established \cite{CKW19} in the framework of Dirichlet form theory on metric measure spaces with the volume doubling property, which include Riemannian manifolds with nonnegative curvature as a special case. Note that this result is appropriate for linear nonlocal operators of divergence form.

The operators under consideration in this paper are nonlinear and of non-divergence form. To the best of author's knowledge, the Krylov–Safonov Harnack inequalities for nonlocal operators were not available on Riemannian manifolds, while they are well-known in the Euclidean spaces \cite{CS09, GS12}. The aim of this work is to establish the Krylov–Safonov Harnack inequalities and H\"older estimates for fully nonlinear nonlocal operators of non-divergence form on Riemannian manifolds with nonnegative sectional curvatures. Since the underlying space is not flat, we focus on how the curvatures affect the regularity properties of solutions to the equations on manifolds.

\subsection{Nonlocal operators on Riemannian manifolds} \label{sec:operator}

There are several ways of understanding nonlocal operators on the Euclidean spaces—via infinitesimal generators of stochastic processes, semigroup and heat kernels, the Dirichlet-to-Neumann map, or generators of Dirichlet forms—each of which has been applied to obtain nonlocal operators on Riemannian manifolds or more abstract spaces in different contexts. Applebaum and Estrade \cite{AE00} suggested the operators of the form
\begin{equation*}
Lu(x) = \int_{T_x M \setminus \lbrace 0 \rbrace} \left( u(\exp_x \xi) - u(x) \right) \nu_x(\mathrm{d} \xi),
\end{equation*}
as infinitesimal generators of isotropic horizontal L\'evy processes on Riemannian manifold $M$ with some symmetry assumption on it, where $T_x M$ is the tangent space at $x \in M$, $\exp$ is the exponential map, and $\nu_x$ is the L\'evy measure.

On the other hand, Banica, Gonz\'alez, and S\'aez \cite{BGS15} provided the representation of the fractional Laplacian
\begin{equation} \label{eq:hyperbolic}
-(-\Delta_{\mathbb{H}^n})^{\sigma/2} u(x) = \mathrm{p.v.} \int_{\mathbb{H}^n} \left( u(z) - u(x) \right) \mathcal{K} \left( d_{\mathbb{H}^n}(z, x) \right) \mathrm{d}z
\end{equation}
on the hyperbolic spaces $\mathbb{H}^n$ with negative constant curvature, where $\mathrm{p.v.}$ denotes the Cauchy principal value, by using the Fourier transform \cite{CS07}. See \cite{BGS15} for the precise definition of the kernel $\mathcal{K}$ in \eqref{eq:hyperbolic}. For more general compact manifolds and non-compact manifolds with Ricci curvature and injectivity radius bounded below, Alonso-Or\'an, C\'ordoba, and Mart\'inez \cite{AOCM18} provided an integral representation of the fractional Laplace–Beltrami operator with an error term using the well known formula
\begin{equation*}
(-\Delta_g)^{\sigma/2} u(x) = \int_0^\infty \left( e^{-t \Delta_g} u(x) - u(x) \right) \frac{\mathrm{d} t}{t^{1+\sigma/2}}, \quad \sigma \in (0,2),
\end{equation*}
and the heat kernel bounds. However, we do not take the operators in \cite{BGS15} and \cite{AOCM18} as our definition because we are going to consider operators in a more specific form.

In the Dirichlet form theory, it is standard to assume that metric measure space $(M, d, \mu)$ satisfies the volume doubling property. In this setting, the fractional Laplacian-type Dirichlet form
\begin{equation*}
\mathcal{E}(u, v) = \iint_{M \times M \setminus \mathrm{diag}} (u(x) - u(z)) (v(x) - v(z)) J(x, z) \, \mu(\mathrm{d}x) \mu(\mathrm{d}z)
\end{equation*}
with
\begin{equation*}
\frac{\lambda}{\mu(B(x, d(x, z))) d(x, z)^\sigma} \leq J(x, z) \leq \frac{\Lambda}{\mu(B(x, d(x, z))) d(x, z)^\sigma}, \quad 0 < \lambda \leq \Lambda,
\end{equation*}
gives rise to the generator of the fractional Laplacian-type \cite{CKW19}. Motivated by the fact that the Riemannian manifolds with nonnegative curvatures are contained within this framework, we are going to modify this generator in order to define non-divergence form operator.

Let $(M, g)$ be a smooth, complete, connected $n$-dimensional Riemannian manifold with nonnegative sectional curvatures. Let $d_x(z) = d(x, z)$ be the Riemannian distance between two points $x$ and $z$ in $M$, and $\mu_g$ be the Riemannian measure induced by $g$. The operator considered in this paper is modeled on the linear operator of the form
\begin{equation} \label{eq:model_operator}
Lu(x) = (2-\sigma) \mathrm{p.v.} \int_M \frac{u(z) - u(x)}{\mu_g(B(x, d_x(z))) d_x(z)^\sigma} \, \d V(z),
\end{equation}
where $\sigma \in (0,2)$ is a constant. The choice of the factor $(2-\sigma)$ in \eqref{eq:model_operator} is now standard to obtain regularity estimates that are robust in the sense that the constants in the estimates remain uniform as $\sigma$ approaches 2 (see \Cref{sec:main_results}).

To define nonlinear operators, let us consider a class $\mathcal{L}_0$ of linear operators of the form
\begin{equation*}
Lu(x) = \mathrm{p.v.} \int_M (u(z) - u(x)) \nu_x(z) \, \d V(z),
\end{equation*}
with density functions $\nu_x$ satisfying
\begin{equation} \label{eq:symmetry}
\nu_x(z) = \nu_x(\mathcal{T}_x(z)) \quad\text{whenever} ~ d_x(z) < \mathrm{inj}(x),
\end{equation}
where $\mathrm{inj}(x)$ is the injectivity radius of $x$ and $\mathcal{T}_x : B(x, \mathrm{inj}(x)) \to B(x, \mathrm{inj}(x))$ is a map given by $\mathcal{T}_x(z) = \exp_z(-\exp_x(z))$, and
\begin{equation} \label{eq:ellipticity}
\lambda \frac{2-\sigma}{\mu_g(B(x, d_x(z))) d_x(z)^\sigma} \leq \nu_x(z) \leq \Lambda \frac{2-\sigma}{\mu_g(B(x, d_x(z))) d_x(z)^\sigma}.
\end{equation}
Whenever we evaluate $Lu$ at $x$, we split the integral as follows: for $R < \mathrm{inj}(x)$,
\begin{align} \label{eq:Lu}
\begin{split}
Lu(x) 
=&~ (2-\sigma) \mathrm{p.v.} \int_{B_R(x)} (u(z)-u(x)) \nu_x(z) \, \d V(z) \\
&~+ (2-\sigma) \int_{M \setminus B_R(x)} (u(z)-u(x)) \nu_x(z) \, \d V(z).
\end{split}
\end{align}
In contrast to the case of Euclidean spaces, the expression
\begin{align} \label{eq:Lu(x)}
\begin{split}
Lu(x) 
=&~ (2-\sigma) \int_{B_R(x)} \delta(u, x, z) \nu_x(z) \, \d V(z) \\
&~+ (2-\sigma) \int_{M \setminus B_R(x)} (u(z) - u(x)) \nu_x(z) \, \d V(z),
\end{split}
\end{align}
where $\delta(u, x, z) = (u(z) + u(\mathcal{T}_x(z)) - 2u(x))/2$ is the second order incremental quotients, is not available in general because $M$ is not a symmetric manifold. Nevertheless, we will see in \Cref{lem:well-definedness} that for $L \in \mathcal{L}_0$, \eqref{eq:Lu} is well-defined when $u$ is bounded in $M$ and $C^2$ in a neighborhood of $x$. Throughout the paper this observation will be used frequently, especially for the squared distance function $d_x^2(z)$.

The extremal operators and elliptic operators are defined in the standard way as follows. To impose ellipticity on operators, we define the {\it maximal} and {\it minimal operators} by
\begin{equation*}
\mathcal{M}^+_{\mathcal{L}_0} u(x) = \sup_{L \in \mathcal{L}_0} Lu(x) \quad\text{and}\quad \mathcal{M}^-_{\mathcal{L}_0} u(x) = \inf_{L \in \mathcal{L}_0} Lu(x).
\end{equation*}
We say that an operator $\mathcal{I}$ is {\it elliptic with respect to $\mathcal{L}_0$} if
\begin{equation*}
\mathcal{M}^-_{\mathcal{L}_0} (u-v)(x) \leq \mathcal{I}(u, x) - \mathcal{I}(v, x) \leq \mathcal{M}^+_{\mathcal{L}_0} (u-v) (x)
\end{equation*}
for every point $x \in M$ and for all bounded functions $u$ and $v$ which are $C^2$ near $x$.

We point out that the usual explicit expressions of extremal operators in the Euclidean spaces
\begin{align*}
\begin{split}
\mathcal{M}^+_{\mathcal{L}_0} u(x) &= (2-\sigma) \int_{\mathbb{R}^n} \frac{\Lambda \delta_+(u, x, y) - \lambda \delta_-(u, x, y)}{\omega_n \ve y \ve^{n+\sigma}} \, \mathrm{d}y \quad\text{and}\\
\mathcal{M}^-_{\mathcal{L}_0} u(x) &= (2-\sigma) \int_{\mathbb{R}^n} \frac{\lambda \delta_+(u, x, y) - \Lambda \delta_-(u, x, y)}{\omega_n \ve y \ve^{n+\sigma}} \, \mathrm{d}y,
\end{split}
\end{align*}
where $\omega_n$ is the volume of the $n$-dimensional unit ball and $\delta(u, x, y) = (u(x+y) + u(x-y) - 2u(x))/2$, are not available on manifolds in general. Thus, whenever we evaluate $Lu$ or $\mathcal{M}^\pm_{\mathcal{L}_0} u$ at $x$, we have to split the integral as \eqref{eq:Lu} or \eqref{eq:Lu3} to compute each integral.

\subsection{Main results} \label{sec:main_results}

The main results are the Krylov–Safonov Harnack inequality and interior H\"older estimates for fully nonlinear nonlocal operators of non-divergence form on Riemannian manifolds with nonnegative sectional curvatures. Throughout the paper we assume that $(M, g)$ is a smooth, complete, connected Riemannian manifold with nonnegative sectional curvatures, satisfying the reverse volume doubling property \eqref{eq:RVD} with constant $a_1$ and the volume comparability \eqref{eq:comparability} with constant $a_2$. See \Cref{sec:preliminaries} for the assumptions \eqref{eq:RVD} and \eqref{eq:comparability}. Let us begin with the Krylov–Safonov Harnack inequality.

\begin{theorem} [Harnack inequality] \label{thm:Harnack}
Let $\sigma_0 \in (0,2)$ and assume $\sigma \in [\sigma_0, 2)$. For $z_0 \in M$, let $K = K_{\mathrm{max}}(B(z_0, \mathrm{inj} (z_0)))$ be the supremum of the sectional curvatures in $B(z_0, \mathrm{inj}(z_0))$ and let $R > 0$ be such that $2R < \mathrm{inj}(z_0) \land \frac{\pi}{\sqrt{K}}$. If $u \in C^2(B_{2R}(z_0)) \cap L^\infty(M)$ is a nonnegative function on $M$ satisfying
\begin{equation*}
\mathcal{M}^-_{\mathcal{L}_0} u \leq C_0 \quad \text{and} \quad \mathcal{M}^+_{\mathcal{L}_0} u \geq -C_0 \quad\text{in}~ B_{2R}(z_0),
\end{equation*}
then
\begin{equation*}
\sup_{B_R(z_0)} u \leq C \left( \inf_{B_R(z_0)} u + C_0 R^\sigma \right)
\end{equation*}
for some universal constant $C > 0$, depending only on $n$, $\lambda$, $\Lambda$, $a_1$, $a_2$, and $\sigma_0$.
\end{theorem}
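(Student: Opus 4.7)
The plan is to adapt the Caffarelli--Silvestre nonlocal Krylov--Safonov scheme to the Riemannian setting. By scaling one may reduce to $C_0 = 1$, after which the Harnack inequality factors into two ingredients: an $L^\varepsilon$-estimate for nonnegative functions satisfying $\mathcal{M}^-_{\mathcal{L}_0} u \leq 1$, and a local maximum principle for functions satisfying $\mathcal{M}^+_{\mathcal{L}_0} u \geq -1$, both on $B_{2R}(z_0)$. The curvature and injectivity restriction $2R < \mathrm{inj}(z_0) \wedge \pi/\sqrt{K}$ is imposed precisely to guarantee that $B_{2R}(z_0)$ lies inside a geodesically convex normal neighborhood on which the exponential map is a diffeomorphism. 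Consequently $\mathcal{T}_x$ is well defined on the balls of interest and, by the Hessian comparison theorem applied with nonnegative sectional curvature, the squared distance functions $d_x^2$ are smooth with Hessian bounded above---exactly the ingredient needed to use $d_x^2$ as a barrier together with the splitting \eqref{eq:Lu}.

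The analytical core is a nonlocal ABP-type estimate. Following the concave envelope construction, at each contact point I would exploit that the second-order quotient $\delta(u,x,z) = \tfrac{1}{2}(u(z)+u(\mathcal{T}_x(z))-2u(x))$ is nonpositive in the symmetric region $B(x,\mathrm{inj}(x))$, and combine this with the symmetry \eqref{eq:symmetry} and the ellipticity bounds \eqref{eq:ellipticity} to extract the dyadic decay of the contact set required in the nonlocal ABP. A smooth barrier built from the Riemannian distance $d_{z_0}$ then converts the ABP inequality into the fundamental point-to-measure decay: if $\inf_{B_R(z_0)} u \leq 1$, then $\mu_g(\{u > M\} \cap B_R(z_0)) \leq CM^{-\varepsilon}\mu_g(B_R(z_0))$ for universal $C$ and $\varepsilon$. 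Volume doubling together with the reverse volume doubling \eqref{eq:RVD}, the comparability \eqref{eq:comparability}, and Bishop--Gromov (available under nonnegative sectional curvature) are what make the barrier computation uniform in $x$ and $\sigma$.

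A Vitali/Calder\'on--Zygmund-type selection on geodesic balls upgrades the point-to-measure decay to the $L^\varepsilon$-estimate, and the standard iteration on dyadic rings then produces the Harnack inequality. I expect two main obstacles. First, the asymmetry of $M$ means the clean symmetric expression \eqref{eq:Lu(x)} is only available inside $B(x,\mathrm{inj}(x))$, so every use of the second-order quotient must be paired with a separate tail estimate on $M \setminus B_R(x)$ that draws only on the pointwise ellipticity \eqref{eq:ellipticity} and the $L^\infty$ bound on $u$; absorbing these tails without spoiling universality is the most delicate bookkeeping in the proof. Second, Riemannian balls do not admit a dyadic tree structure, so the Euclidean Calder\'on--Zygmund decomposition has to be replaced by a covering argument driven by \eqref{eq:RVD} and \eqref{eq:comparability}, with the overlap and selection constants carefully tracked through the iteration. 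The factor $(2-\sigma)$ in the definition of $\mathcal{L}_0$ is crucial throughout: it keeps every barrier and tail computation bounded as $\sigma \uparrow 2$, so that the final universal constant depends on $\sigma_0$ but not on $\sigma \in [\sigma_0, 2)$, thereby recovering the classical second-order Krylov--Safonov estimate in the limit.
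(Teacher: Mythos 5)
Your high-level pipeline (ABP-type estimate plus a barrier to get a point-to-measure decay, then a Calder\'on--Zygmund step to get an $L^\varepsilon$-estimate, then iteration) is the right recipe and matches the paper's structure (Sections 3--6). However, several concrete claims in your outline are incorrect or skip over the actual technical content.

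\textbf{Dyadic cubes do exist on $M$.} You assert that ``Riemannian balls do not admit a dyadic tree structure, so the Euclidean Calder\'on--Zygmund decomposition has to be replaced by a covering argument.'' This is wrong: Christ's theorem (\Cref{thm:dyadic_cubes}) furnishes a dyadic system on any space of homogeneous type, hence on $M$ under \eqref{eq:VD}, and the paper's argument depends on it essentially. The discrete ABP estimate (\Cref{lem:ABP_discrete}, \Cref{lem:key}) is stated as a Riemann-sum bound over a \emph{finite family of dyadic cubes} covering the contact set, and the $L^\varepsilon$-estimate (\Cref{lem:L-eps}) is proved by Calder\'on--Zygmund iteration over Christ cubes of a fixed generation $k_R$. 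A ball-covering Vitali argument would not reproduce the nested, non-overlapping decomposition the discrete ABP requires, and you do not explain how to replace it.

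\textbf{Sign of $\delta$ at contact points.} You claim that at contact points ``$\delta(u,x,z)$ is nonpositive in the symmetric region.'' The sign is the opposite. At a contact point $x$ the paraboloid $P_y = c_y - \tfrac{1}{2R^2}d_y^2$ touches $u$ from \emph{below}, so $u(z) - u(x) \geq P_y(z) - P_y(x)$ nearby, giving $\delta(u,x,z) \geq \delta(P_y,x,z)$, a \emph{lower} bound. Since $P_y$ is concave, $\delta(P_y,x,z) \leq 0$, and by \Cref{lem:dist_squared} it is bounded below by $-C|\xi|_g^2/R^2$. What the ABP uses is precisely this lower bound (so that $I_1$ in \Cref{lem:good_ring} is nonnegative and can be compared to $f$); the upper bound you assert is false in general and would break the argument.

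\textbf{The missing technical core.} Your proposal does not address the main new difficulty the paper has to deal with, which it explicitly flags in the introduction: because a nonlocal operator has order strictly less than $2$, one cannot pass through $\det D^2 u$ and a change-of-variables formula as Cabr\'e does for second-order operators. Instead one needs (i) the CS09-style ``good ring'' argument (\Cref{lem:good_ring}) showing $u$ is quadratically close to the touching paraboloid on a large portion of some dyadic ring, (ii) the $-R^{-2}$-convexity of $\Gamma - P_y$ (\Cref{lem:convexity}) to propagate that flatness inward (\Cref{lem:geometric}), and (iii) a direct gradient estimate of the multivalued map $\phi$ via a telescoping difference-quotient argument plus a Jacobi-field estimate (\Cref{lem:gradient_map}) rather than the coarea formula. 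Related to this, the antisymmetric part of the volume form ($\d V_a$ in \eqref{eq:Lu3}) contributes a genuine extra term that must be shown to have the same order as the symmetric part (\Cref{lem:well-definedness}); your outline treats the tail estimates as routine but does not mention this curvature-dependent antisymmetric contribution at all. Finally, ``rescaling to $C_0 = 1$'' is not a true scaling on a manifold; the paper instead normalizes $u$ and carries the factor $R^\sigma$ through the lemmas, and the theorem's hypothesis $2R < \mathrm{inj}\wedge\pi/\sqrt{K}$ still needs to be reconciled with the $15R$ condition used in the intermediate lemmas by a covering step, which you do not mention.
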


The next result is the interior H\"older estimate for fully nonlinear nonlocal operators of non-divergence form. In contrast to the case of local operators, it does not immediately follow from the Harnack inequality. In the sequel, $\Ve \cdot \Ve'$ denotes the non-dimensional norm.

\begin{theorem} [H\"older estimates] \label{thm:Holder}
Let $\sigma_0 \in (0,2)$ and assume $\sigma \in [\sigma_0, 2)$. For $z_0 \in M$,let $K = K_{\mathrm{max}}(B(z_0, \mathrm{inj} (z_0)))$ and let $R > 0$ be such that $2R < \mathrm{inj}(z_0) \land \frac{\pi}{\sqrt{K}}$. If $u \in C^2(B_{2R}(z_0)) \cap L^\infty(M)$ is a function on $M$ satisfying
\begin{equation*}
\mathcal{M}^-_{\mathcal{L}_0} u \leq C_0 \quad \text{and} \quad \mathcal{M}^+_{\mathcal{L}_0} u \geq -C_0 \quad\text{in}~ B_{2R}(z_0),
\end{equation*}
then $u \in C^\alpha (\overline{B_R (z_0)})$ and
\begin{equation*}
\Ve u \Ve'_{C^\alpha(\overline{B_R(z_0)})} \leq C \left( \Ve u \Ve_{L^\infty(M)} + C_0 R^\sigma \right)
\end{equation*}
for some universal constants $\alpha \in (0,1)$ and $C > 0$, depending only on $n$, $\lambda$, $\Lambda$, $a_1$, $a_2$, and $\sigma_0$.
\end{theorem}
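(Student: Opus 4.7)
The plan is to derive the Hölder estimate from a geometric oscillation-decay on dyadic balls, in the Caffarelli--Silvestre style used in \cite{CS09}: the $L^\epsilon$ / weak-Harnack part of \Cref{thm:Harnack} supplies a universal decay factor at each scale, and the manifold geometry enters through the ellipticity \eqref{eq:ellipticity}, the volume comparability \eqref{eq:comparability}, and the reverse doubling \eqref{eq:RVD}. After replacing $u$ by $u/(\Ve u \Ve_{L^\infty(M)} + C_0 R^\sigma)$, which preserves the Pucci hypotheses up to universal constants, we may assume $\Ve u \Ve_{L^\infty(M)} \leq 1$ and $C_0 R^\sigma \leq 1$, so that the target reduces to $\Ve u \Ve'_{C^\alpha(\overline{B_R(z_0)})} \leq C$.

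Fix a universal $\rho \in (0, 1)$ to be chosen later, set $R_k = \rho^k R$, and write $M_k = \sup_{B_{R_k}(z_0)} u$, $m_k = \inf_{B_{R_k}(z_0)} u$. I would prove by induction on $k$ that there exist universal $\alpha \in (0, 1)$ and $Q \geq 2$ with
\begin{equation*}
M_k - m_k \leq Q \rho^{\alpha k} \quad\text{for all } k \geq 0.
\end{equation*}
Since the argument may be recentered at any point of $\overline{B_R(z_0)}$, using the Pucci hypothesis in $B_{2R}(z_0)$, this pointwise decay upgrades to the quantitative Hölder bound on the closure by a routine covering.

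For the inductive step, assume the bound for $j \leq k$ and, without loss of generality, $M_k - m_k \geq \tfrac{1}{2} Q \rho^{\alpha k}$ (else there is nothing to prove). Define
\begin{equation*}
v(x) = \frac{2}{M_k - m_k}\Bigl( u(x) - \tfrac{1}{2}(M_k + m_k) \Bigr),
\end{equation*}
so that $|v| \leq 1$ on $B_{R_k}(z_0)$. The induction hypothesis at scale $R_j$ propagates to $|v(x)| \leq C \rho^{-\alpha(k-j)}$ on $B_{R_j}(z_0) \setminus B_{R_{j+1}}(z_0)$, hence to the power-growth bound $|v(x)| \leq C (d(x, z_0)/R_k)^\alpha$ for $x \in B_R(z_0) \setminus B_{R_k}(z_0)$, together with $|v(x)| \leq C \rho^{-\alpha k}$ globally. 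Replacing $u$ by $-u$ if needed, the set $\{v \leq 0\} \cap B_{R_k/2}(z_0)$ has measure at least half that of the ball. I would then apply the $L^\epsilon$ / weak-Harnack estimate underlying \Cref{thm:Harnack} to $w := (1 - v)_+$, which is nonnegative on $M$, at least $1$ on the above set, and satisfies Pucci bounds in $B_{R_k/2}(z_0)$ modulo a tail coming from the truncation,
\begin{equation*}
(2 - \sigma) \int_{M \setminus B_{R_k}(x)} \frac{(v(z) - 1)_+}{\mu_g(B(x, d(x, z))) d(x, z)^\sigma} \, \d V(z), \quad x \in B_{R_k/2}(z_0).
\end{equation*}
A dyadic decomposition of this integral, using \eqref{eq:RVD}, \eqref{eq:comparability}, and the $\alpha$-growth of $(v-1)_+$, bounds it by a universal constant provided $\alpha < \sigma_0$. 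The weak-Harnack estimate then yields $\inf_{B_{R_{k+1}}(z_0)} w \geq \eta$ for a universal $\eta > 0$, hence $v \leq 1 - \eta$ on $B_{R_{k+1}}(z_0)$, which rewrites as $M_{k+1} - m_{k+1} \leq (1 - \eta/2)(M_k - m_k)$. Choosing $\alpha$ small enough that $\rho^\alpha \geq 1 - \eta/2$ closes the induction.

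The hard part will be the tail control: the growth of $v$ outside $B_{R_k}(z_0)$ is governed by the very exponent $\alpha$ we are trying to establish, so the dyadic tail sum must stay bounded uniformly in $k$ and, crucially, uniformly as $\sigma \uparrow 2$ so that the final constants are robust. The $(2 - \sigma)$ factor in \eqref{eq:ellipticity} and the constraint $\alpha < \sigma_0$ are precisely what make the geometric series of tail contributions converge at a universal rate; the manifold-specific estimates \eqref{eq:RVD} and \eqref{eq:comparability} are the substitutes for the Euclidean scaling that makes this trivial in $\mathbb{R}^n$. A minor technical point is that $(1 - v)_+$ is not $C^2$; applying the weak-Harnack rigorously requires either a standard smoothing or a direct reformulation of $\mathcal{M}^\pm (1 - v)_+$ via the truncation decomposition, neither of which should cause real difficulty.
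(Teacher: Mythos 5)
Your proposal is correct and follows essentially the same route as the paper's Lemma~\ref{lem:Holder_0}: an oscillation-decay iteration on dyadic scales, in which a truncated affine renormalization of $u$ is fed into the $L^\varepsilon$/weak-Harnack machinery of \Cref{lem:L-eps} (equivalently \Cref{thm:WHI}), and the nonlocal tail generated by the truncation is controlled by the inductive power growth, \eqref{eq:RVD}, \eqref{eq:comparability}, and the constraint $\alpha < \sigma_0$. The cosmetic differences---balls $\rho^k R$ versus $7\cdot 4^{-k}R$ with Christ dyadic cubes, $w=(1-v)_+$ versus $w=v_+$, and invoking \Cref{thm:WHI} rather than the measure-decay form of \Cref{lem:L-eps}---are interchangeable and do not change the substance of the argument.
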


It is noticeable that the universal constants in \Cref{thm:Harnack} and \Cref{thm:Holder} do not depend on nearby curvature upper bound although they
depend on the lower bound 0. This means that, in particular, when $M = \mathbb{R}^n$, \Cref{thm:Harnack} and \Cref{thm:Holder} provide the results on the Krylov–Safonov Harnack inequality and H\"older estimates as in \cite{CS09} without any restriction on $R$. Moreover generally, the restriction on $R$ disappears when $M$ is a manifold with $\mathrm{inj}(M) = \infty$. In this case, \Cref{thm:Harnack} extends the global Harnack inequality for local operators \cite{Cab97} to nonlocal operators.

Another important feature of \Cref{thm:Harnack} and \Cref{thm:Holder} is the robustness of the estimates. Since the universal constants in the results depend only on $\sigma_0$, not on $\sigma \in (0,2)$ itself, we could get the local Harnack inequality and H\"older estimates for the second order local operators as limit $\sigma \to 2$.

\begin{corollary}
For $z_0 \in M$, let $K = K_{\mathrm{max}}(B(z_0, \mathrm{inj} (z_0)))$ be the supremum of the sectional curvatures in $B(z_0, \mathrm{inj}(z_0))$ and let $R > 0$ be such that $2R < \mathrm{inj}(z_0) \land \frac{\pi}{\sqrt{K}}$.
\begin{enumerate}[(i)]
\item
If $u \in C^2(B_{2R}(z_0)) \cap L^\infty(M)$ is a nonnegative function on $M$ satisfying $\mathcal{M}^-(D^2 u) \leq C_0$ and $\mathcal{M}^+(D^2 u) \geq -C_0$ in $B_{2R}(z_0)$, then
\begin{equation*}
\sup_{B_R(z_0)} u \leq C \left( \inf_{B_R(z_0)} u + C_0 R^2 \right).
\end{equation*}

\item
If $u \in C^2(B_{2R}(z_0)) \cap L^\infty(M)$ is a function on $M$ satisfying $\mathcal{M}^-(D^2 u) \leq C_0$ and $\mathcal{M}^+(D^2 u) \geq -C_0$ in $B_{2R}$, then $u \in C^\alpha (\overline{B_R (z_0)})$ and
\begin{equation*}
\Ve u \Ve'_{C^\alpha(\overline{B_R(z_0)})} \leq C \left( \Ve u \Ve_{L^\infty(M)} + C_0 R^2 \right).
\end{equation*}
\end{enumerate}
The universal constants $C > 0$ and $\alpha \in (0,1)$ depend only on $n$, $\lambda$, $\Lambda$, $a_1$, and $a_2$.
\end{corollary}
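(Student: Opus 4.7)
The plan is to deduce the corollary from \Cref{thm:Harnack} and \Cref{thm:Holder} by a limiting argument $\sigma \to 2^{-}$. What makes this possible is the \emph{robustness} of the nonlocal estimates: the universal constants depend only on $\sigma_0$ and not on $\sigma$ itself, so they remain bounded along any sequence $\sigma_k \nearrow 2$ with $\sigma_k \geq \sigma_0$. The core analytic ingredient is the pointwise convergence
\[
\lim_{\sigma \to 2^{-}} \mathcal{M}^{\pm}_{\mathcal{L}_0} u(x) = c_n \, \mathcal{M}^{\pm}(D^2 u(x))
\]
for every bounded $u$ that is $C^2$ in a neighborhood of $x$, where $c_n$ is a dimensional constant that can be absorbed into the ellipticity bounds $\lambda$ and $\Lambda$. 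I would prove this by splitting $Lu(x)$ as in \eqref{eq:Lu(x)} with a small $R < \mathrm{inj}(x)$, passing to normal coordinates at $x$, and combining the symmetry \eqref{eq:symmetry} with the Taylor expansion $u(\exp_x \xi) = u(x) + \langle \nabla u(x), \xi \rangle + \tfrac{1}{2} \langle D^2 u(x) \xi, \xi \rangle + O(|\xi|^3)$. The symmetry kills the linear term in the principal-value part, the factor $(2-\sigma)$ together with the kernel in \eqref{eq:ellipticity} (comparable to $|\xi|^{-n-\sigma}$ in normal coordinates by the volume comparability) concentrates the integral at the origin to produce a weighted trace of $D^2 u(x)$ with eigenvalues in $[c_n\lambda, c_n\Lambda]$, and the tail integral is controlled by $(2-\sigma)\,\Ve u \Ve_{L^\infty(M)} \cdot O(R^{-\sigma})$, which vanishes as $\sigma \to 2^{-}$. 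Taking supremum and infimum over $L \in \mathcal{L}_0$ then yields precisely the local Pucci extremal operators.

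With this convergence available, suppose $u$ satisfies $\mathcal{M}^-(D^2 u) \leq C_0$ and $\mathcal{M}^+(D^2 u) \geq -C_0$ in $B_{2R}(z_0)$. For any $\epsilon > 0$, the uniform $C^2$ regularity of $u$ on $B_{2R}(z_0)$, combined with the pointwise limit above, lets me choose $\sigma_{*} \in (0,2)$ close enough to $2$ so that
\[
\mathcal{M}^-_{\mathcal{L}_0} u \leq C_0 + \epsilon \quad\text{and}\quad \mathcal{M}^+_{\mathcal{L}_0} u \geq -C_0 - \epsilon \quad\text{in } B_{2R}(z_0)
\]
for every $\sigma \in [\sigma_*, 2)$. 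Applying \Cref{thm:Harnack} with base exponent $\sigma_0 = \sigma_*$ produces $\sup_{B_R(z_0)} u \leq C (\inf_{B_R(z_0)} u + (C_0+\epsilon) R^\sigma)$ with $C$ independent of $\sigma$. Letting $\sigma \to 2^{-}$ and then $\epsilon \to 0$ yields part (i); part (ii) follows in the same way from \Cref{thm:Holder}, since the $C^\alpha$-estimate has the same uniform structure and the norm on the right-hand side does not involve $\sigma$.

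The main obstacle is the \emph{uniform-in-$x$} version of the limit $\mathcal{M}^{\pm}_{\mathcal{L}_0} u(x) \to c_n \mathcal{M}^{\pm}(D^2 u(x))$ on $B_{2R}(z_0)$. One must simultaneously control the error from replacing $\mu_g(B(x, d_x(z)))$ by its Euclidean counterpart in normal coordinates at each $x$, the uniformity of the Taylor remainder as $x$ varies, and the commutation of the limit $\sigma \to 2^{-}$ with the supremum and infimum over the infinite family $\mathcal{L}_0$. Uniformity of the remainder is supplied by $u \in C^2(B_{2R}(z_0))$; the volume error is handled using nonnegative sectional curvature (which yields volume comparisons via Bishop–Gromov); and the sup/inf exchange is reduced to the fact that each admissible $\nu_x$ satisfies the uniform two-sided bound \eqref{eq:ellipticity}, so that the family of measures used in the limit is precompact in a suitable sense.
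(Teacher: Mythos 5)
Your overall strategy is exactly the one the paper intends but leaves implicit: fix the auxiliary exponent, use the robustness of the constants in \Cref{thm:Harnack} and \Cref{thm:Holder}, and pass to the limit $\sigma\to 2^-$ after showing that $\mathcal{M}^{\pm}_{\mathcal{L}_0} u$ converges to the local Pucci operator. The paper offers no written proof, so your fleshed-out version is the right thing to do. However, two points need repair.

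First, and most substantively, you invoke the decomposition \eqref{eq:Lu(x)}, writing the inner integral directly against the second-order increment $\delta(u,x,z)$ with the measure $\d V$. The paper explicitly warns that this identity \emph{fails} on a non-symmetric manifold: only the symmetrized measure $\d V_s$ pairs with $\delta(u,x,z)$, and one must also keep the antisymmetric term
\[
I_2 = (2-\sigma)\int_{B_R(x)} (u(z)-u(x))\,\nu_x(z)\,\d V_a(z),
\]
as in \eqref{eq:Lu3}. Your Taylor argument ("the symmetry kills the linear term") treats the case $\d V_a\equiv 0$. To close the gap, you need to show $I_2\to 0$ as $\sigma\to 2^-$. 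This does work, but it requires the $C^1$ regularity of $u$ near $x$ rather than just boundedness: writing $u(z)-u(x)=O(d_x(z))$ and using that the antisymmetric density satisfies $\ve\d V_a\ve \lesssim K\,d_x(z)^2\,\d V$ near $x$ (because the even part of $\det(D\exp_x)$ cancels), one gets $\ve I_2\ve \lesssim (2-\sigma)\,K\,R^{3-\sigma}/(3-\sigma)\to 0$. Note that the crude bound used in \Cref{lem:well-definedness}, $\ve I_2\ve \le C\Lambda \Ve u\Ve_\infty R^{-\sigma}$, does \emph{not} vanish as $\sigma\to 2^-$, so the refined $C^1$-argument is genuinely needed here and should be spelled out rather than swept into the "obstacles" paragraph.

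Second, you write that you apply \Cref{thm:Harnack} ``with base exponent $\sigma_0 = \sigma_*$.'' This is a bookkeeping error: the Harnack constant $C$ depends on $\sigma_0$, and $\sigma_*=\sigma_*(\epsilon)$ may approach $2$ as $\epsilon\to 0$, in which case you cannot rule out $C(\sigma_*)\to\infty$ and the final limit $\epsilon\to 0$ does not close. The fix is standard and easy: fix a single $\sigma_0$ once and for all (e.g.\ $\sigma_0=1$), choose $\sigma_*\in[\sigma_0,2)$ so that the approximate differential inequalities hold on $[\sigma_*,2)$, and apply \Cref{thm:Harnack} (resp.\ \Cref{thm:Holder}) with that fixed $\sigma_0$; the resulting constant is then independent of both $\sigma$ and $\epsilon$, and the double limit $\sigma\to 2^-$, $\epsilon\to 0$ goes through. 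With these two repairs your argument is complete and matches the paper's intended route.
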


Let us make some remarks on the results. It would be the best if we get ABP-type estimate with $L^n$-norm as Cabr\'e proved in \cite{Cab97}. However, we will establish the ABP estimate with 
Riemann sums of $L^\infty$-norm as Caffarelli and Silvestre showed in the Euclidean space \cite{CS09}. To the best of author's knowledge, the full ABP estimate with $L^n$-norm for fully nonlinear operators are not available even in the Euclidean spaces. For the class of operators with additional assumptions, Guillen and Schwab \cite{GS12} provided the ABP estimates using both $L^n$ and $L^\infty$ norms in Euclidean spaces. For this type of estimate, we believe it would be applicable to our case.

For curvature bound and imposed radius condition, we refer to Cabr\'e's observation in the last paragraph of \cite{Cab97}. So, we used the injectivity radius and imposed the condition $15R < \mathrm{inj} (z_0) \wedge \frac{\pi}{\sqrt{K}}$,
for $K=K_{\mathrm{max}}(B(z_0,\mathrm{inj}(z_0)))$, on the radius of the ball. However, it might be more convenient to consider a strongly convex region (or a strongly convexity radius) instead of the injectivity radius: we
call $U \subseteq M$ is strongly convex if every ball $B_\rho(x) \subseteq U$ is
convex. This is because our operators are nonlocal and we need to consider the relation between nearby points. Nevertheless, we will use the injectivity radius because it is more general.

If we assume the global upper bound of the sectional curvature such as $\mathrm{Sect}(g) < K$ on $M$, the radius condition would be reduced to $5R < \frac{\pi}{\sqrt{K}}$. Moreover, the additional assumptions—the reverse volume doubling property \eqref{eq:RVD} and comparability of volumes \eqref{eq:comparability}—on manifold are naturally satisfied.

Since manifold is not symmetric space in general, 
a nonlocal antisymmetric part in the operator appears naturally. 
Because it has no second order incremental quotient of function,
we cannot expect the integrability of operators as usual.
However, due to the smoothness of volume form which exists inherently,
we figure out the antisymmetric part
has the same order as the traditional symmetric part.

Moreover, we want to emphasize that we do not use affine functions and cone
technique as usual because affine functions with arbitrary directions do not
exist on manifold in general. Thus, we use the squared distance function to solve this difficulty. Typically,
when we control the gradient of the envelope $\Gamma$ (defined in \Cref{sec:ABP}) with the squared distance function, we might consider the coarea formula as in \cite{Cab97}. However, since the order of differentiability of nonlocal operators is strictly less than 2, we cannot simply use the coarea formula. At this part, we will directly estimate gradient with Jacobi fields.

Lastly, for further researches, we are expecting that we can get a global Harnack
inequality for restricted manifolds. In general, in this paper, we could not
stretch the radius of ball due to the injectivity issue. We are also expecting
that we could get similar regularity properties for nonlocal operator with
kernels of variable orders, which are studied in \cite{KKL16, KL20, BM20} on
Euclidean spaces.

\subsection{Outline}

This paper is organized as follows. 
In \Cref{sec:preliminaries}, we ensure integrability of operators. We also bound
second difference of squared distance function. Mainly we need this bound for gradient
estimate of the solution. Furthermore, we introduce some definitions and collect some result on dyadic cubes for the analysis on manifold. In \Cref{sec:ABP}, we introduce an envelope defined by squared distance
function and estimate its gradient so that we get a (weak type)
Aleksandrov–Bakelman–Pucci estimate. \Cref{sec:barrier} is devoted to the construction of a barrier function. In \Cref{sec:L_eps_estimate}, $L^\varepsilon$-estimate is established by using the ABP estimate and the barrier function obtained in the previous sections. The proofs for the Harnack inequality and H\"older estimate are provided in \Cref{sec:Harnack} and \Cref{sec:Holder}, respectively.

\section{Preliminaries} \label{sec:preliminaries}

This section is devoted to the basic knowledge on Riemannian geometry that will be useful in the rest of the paper. For more details, the reader may consult \cite{Jos17, CLN06, Vil09}.

Let $(M, g)$ be a smooth, complete manifold of dimension $n$. Let us denote by $R(\xi, \eta) \zeta$ the curvature tensor, then the sectional curvature of the plane determined by linearly independent tangent vectors $\xi, \eta \in T_x M$ is given by
\begin{equation*}
\mathrm{Sect}(\xi, \eta) = \frac{g(R(\xi, \eta) \xi, \eta)}{\ve \xi \ve_g^2 \ve \eta \ve_g^2 - g(\xi, \eta)^2}.
\end{equation*}
Let $d_y(\cdot) := d(\cdot, y)$ be the distance function. We will see that the distance squared function $\frac{1}{2} d_y^2$ will play an important role in the regularity results. Let us collect and study some useful properties of this function. First of all, it is continuous in $M$ and smooth in $M \setminus \mathrm{Cut}_y$. For any $x \notin \mathrm{Cut}_y$, the Gauss lemma implies that
\begin{equation*}
\nabla ( d_y^2 /2 )(x) = - \exp_x^{-1} y.
\end{equation*}
Moreover, it is well-known that if $K_1 \leq \mathrm{Sect} \leq K_2$ in $B_{\mathrm{inj}(y)}(y)$ with $K_1 \leq 0$ and $K_2 \geq 0$, then the Hessian of $d_y^2/2$ has upper and lower bounds
\begin{equation} \label{eq:Hessian_general}
\begin{split}
\sqrt{K_2} d_y(x) \cot \left( \sqrt{K_2} d_y(x) \right) \ve \xi \ve_g^2
&\leq D^2 ( d_y^2 /2 )(x) (\xi, \xi) \\
&\leq \sqrt{-K_1} d_y(x) \coth \left( \sqrt{-K_1} d_y(x) \right) \ve \xi \ve_g^2,
\end{split}
\end{equation}
for $x \in B_\rho(y)$ and $\xi \in T_x M$, where $\rho < \frac{\pi}{2\sqrt{K_2}}$ in case $K_2 > 0$ and $\rho < \mathrm{inj}(y)$ otherwise (see, for example, \cite[Theorem 6.6.1]{Jos17}). Since we are assuming that $\mathrm{Sect} \geq 0$, the bounds \eqref{eq:Hessian_general} read as
\begin{equation} \label{eq:Hessian}
0 \leq \sqrt{K} d_y(x) \cot \left( \sqrt{K} d_y(x) \right) \ve \xi \ve_g^2 \leq D^2 ( d_y^2 /2 )(x) (\xi, \xi) \leq \ve \xi \ve_g^2,
\end{equation}
where $K = K_{\mathrm{max}}(B_{\mathrm{inj}(y)}(y))$ is the supremum of the sectional curvatures in $B_{\mathrm{inj}(y)}(y)$. Using \eqref{eq:Hessian} and the mean value theorem for integrals, we obtain the following lemma.

\begin{lemma} \label{lem:dist_squared}
For any $y \in M$ and $x \in B_\rho(y)$, let $\xi \in T_x M$ be such that $\exp_x(s \xi) \in B_\rho(y)$ for all $s \in (-1,1)$, where
\begin{equation} \label{eq:rho_K}
\rho < 
\begin{cases}
\frac{\pi}{2\sqrt{K}} &\text{if}~ K := K_{\mathrm{max}}(B_{\mathrm{inj}(y)}(y)) > 0, \\
\mathrm{inj}(y) &\text{if}~ K = 0.
\end{cases}
\end{equation}
Then,
\begin{equation*}
0 \leq (1-t) d_y^2(\exp_x(t \xi)) + t d_y^2(\exp_x((1-t)(-\xi))) - d_y^2(x) \leq t(1-t) \ve \xi \ve_g^2
\end{equation*}
for any $t \in (0,1)$.
\end{lemma}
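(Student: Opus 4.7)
The plan is to reduce the claim to a one–variable convexity/concavity estimate along the geodesic $\gamma(s) := \exp_x(s\xi)$, $s \in (-1,1)$, and then apply the Hessian bound \eqref{eq:Hessian}. First I would observe that, by the hypothesis on $\rho$ and the assumption $\exp_x(s\xi) \in B_\rho(y)$ for $s \in (-1,1)$, the function $d_y^2/2$ is smooth at every point of $\gamma$, so the two-sided bound \eqref{eq:Hessian} is applicable there. Set $f(s) := \tfrac12 d_y^2(\gamma(s))$. Since $\gamma$ is a geodesic, its velocity is parallel with constant norm $\|\gamma'(s)\|_g = \|\xi\|_g$, hence
\begin{equation*}
f''(s) = D^2(\tfrac12 d_y^2)(\gamma(s))(\gamma'(s),\gamma'(s)),
\end{equation*}
and \eqref{eq:Hessian} gives $0 \leq f''(s) \leq \|\xi\|_g^2$ for every $s \in (-1,1)$.

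Next I would identify the two evaluation points: $\gamma(t) = \exp_x(t\xi)$ and $\gamma(t-1) = \exp_x\bigl((1-t)(-\xi)\bigr)$. Since $(1-t)\cdot t + t\cdot(t-1) = 0$, the point $\gamma(0) = x$ is the corresponding convex combination along $\gamma$, and the quantity in the lemma equals $2\bigl[(1-t)f(t) + tf(t-1) - f(0)\bigr]$. Using Taylor's formula with integral remainder at $s=0$, the linear terms $(1-t)\,t\,f'(0) + t\,(t-1)\,f'(0)$ cancel, leaving
\begin{equation*}
(1-t)f(t) + tf(t-1) - f(0) = (1-t)\!\int_0^t (t-s)\,f''(s)\,ds \; + \; t\!\int_{t-1}^{0} (s - t + 1)\,f''(s)\,ds .
\end{equation*}

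Finally I would read off the bounds: the weight $(1-t)(t-s)$ is nonnegative on $[0,t]$ and $t(s-t+1)$ is nonnegative on $[t-1,0]$, so the lower bound follows immediately from $f''\geq 0$. For the upper bound I insert $f''(s) \leq \|\xi\|_g^2$ and compute the two elementary integrals, which evaluate to $t^2/2$ and $(1-t)^2/2$ respectively; summing gives $\tfrac12 t(1-t)\|\xi\|_g^2$. Doubling yields exactly $t(1-t)\|\xi\|_g^2$, as required. The only real point to check is the smoothness of $d_y^2$ along $\gamma$ — i.e.\ that $\gamma$ avoids the cut locus of $y$ — which is guaranteed by the radius condition \eqref{eq:rho_K}; everything else is a direct application of \eqref{eq:Hessian} and elementary one-variable calculus (in the spirit of the mean-value theorem for integrals mentioned in the text).
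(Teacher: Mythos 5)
Your proof is correct and matches the paper's intended argument: the paper states the lemma with only the one-line hint ``Using \eqref{eq:Hessian} and the mean value theorem for integrals, we obtain the following lemma,'' and your reduction to $f(s) = \tfrac12 d_y^2(\gamma(s))$ along the geodesic $\gamma(s)=\exp_x(s\xi)$, followed by the two-sided bound $0\leq f''\leq\|\xi\|_g^2$ from \eqref{eq:Hessian} and Taylor's formula with integral remainder, is precisely that argument made explicit. The algebra (vanishing of the $f'(0)$ terms since $(1-t)t+t(t-1)=0$, and the evaluation of the weighted integrals to $\tfrac12 t(1-t)\|\xi\|_g^2$) is correct.
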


Let us now recall Gromov's theorem in a manifold with a nonnegative Ricci curvature. Since we assume that $\mathrm{Sect} \geq 0$, the Ricci curvature is also nonnegative. The Gromov's theorem says that
\begin{equation*}
\frac{\mu_g(B(x, R))}{\ve B_R \ve} \quad\text{is nonincreasing in}~ R
\end{equation*}
for any $x \in M$, where $\ve B_R \ve$ is the volume in $\mathbb{R}^n$ of a ball of radius $R$. It is known that the ratio approaches 1 as $R$ goes to zero, so together with the monotonicity it implies that $\mu_g(B(x, R)) \leq \ve B_R \ve$. Moreover, the Gromov's theorem also gives rise to the volume doubling property
\begin{equation} \label{eq:VD} \tag{VD}
\frac{\mu_g(B(x, R))}{\mu_g(B(x, r))} \leq \left( \frac{R}{r} \right)^n, \quad 0 < r \leq R.
\end{equation}
The volume doubling property provides the following integrability of kernels $\nu_x$.

\begin{lemma} \label{lem:integrability}
Let $\sigma_0 \in (0,2)$ and assume $\sigma \in [\sigma_0, 2)$. Then,
\begin{equation} \label{eq:integrability}
(2-\sigma) \int_M \left( R^2 \land d_x(z)^2 \right) \frac{\d V(z)}{\mu_g(B(x, d_x(z))) d_x(z)^\sigma} \leq C R^{2-\sigma}
\end{equation}
for some constant $C = C(n, \sigma_0) > 0$.
\end{lemma}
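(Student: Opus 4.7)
The plan is to split the integral at the radius $R$ and estimate each piece by a dyadic annulus decomposition, relying only on the volume doubling property \eqref{eq:VD} to control how $\mu_g(B(x,d_x(z)))$ and the volumes of the annuli interact.

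First, decompose $M = B_R(x) \cup (M \setminus B_R(x))$. On $B_R(x)$ the cutoff $R^2 \land d_x(z)^2$ equals $d_x(z)^2$, and I would write $B_R(x) = \bigcup_{k \geq 0} A_k$ with $A_k = B_{2^{-k}R}(x) \setminus B_{2^{-k-1}R}(x)$. On $A_k$, one has $d_x(z) \geq 2^{-k-1}R$ and $\mu_g(B(x,d_x(z))) \geq \mu_g(B(x,2^{-k-1}R))$, while $\mu_g(A_k) \leq \mu_g(B(x,2^{-k}R)) \leq 2^n \mu_g(B(x,2^{-k-1}R))$ by \eqref{eq:VD}. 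Substituting $d_x(z)^2 \leq (2^{-k}R)^2$ yields a contribution of order $C(n)\,R^{2-\sigma}\,2^{-k(2-\sigma)}$ from each annulus, and the geometric sum gives
\begin{equation*}
(2-\sigma)\int_{B_R(x)} d_x(z)^2 \frac{\d V(z)}{\mu_g(B(x,d_x(z)))\,d_x(z)^\sigma} \leq \frac{C(n)\,(2-\sigma)}{1-2^{\sigma-2}}\, R^{2-\sigma}.
\end{equation*}

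For the exterior piece, write $M \setminus B_R(x) = \bigcup_{k \geq 0} \widetilde{A}_k$ with $\widetilde{A}_k = B_{2^{k+1}R}(x) \setminus B_{2^k R}(x)$. Now the cutoff contributes $R^2$, while on $\widetilde{A}_k$ we have $d_x(z)^\sigma \geq (2^k R)^\sigma$, $\mu_g(B(x,d_x(z))) \geq \mu_g(B(x,2^k R))$, and $\mu_g(\widetilde{A}_k) \leq 2^n \mu_g(B(x,2^k R))$ by \eqref{eq:VD}. Thus each annulus contributes at most $C(n)\,R^{2-\sigma}\,2^{-k\sigma}$, and summing gives
\begin{equation*}
(2-\sigma)\int_{M \setminus B_R(x)} R^2 \frac{\d V(z)}{\mu_g(B(x,d_x(z)))\,d_x(z)^\sigma} \leq \frac{C(n)\,(2-\sigma)}{1-2^{-\sigma}}\, R^{2-\sigma}.
\end{equation*}

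It remains to observe that both prefactors are uniformly bounded for $\sigma \in [\sigma_0,2)$. For the exterior bound, $(2-\sigma)/(1-2^{-\sigma}) \leq 2/(1-2^{-\sigma_0})$. For the interior bound, setting $\varepsilon = 2-\sigma$ we have $\varepsilon/(1-2^{-\varepsilon}) = \varepsilon/(1-e^{-\varepsilon\log 2})$, which extends continuously to the value $1/\log 2$ at $\varepsilon = 0$ and is therefore bounded on $(0,2-\sigma_0]$. This is really the only subtle point in the proof: the $(2-\sigma)$ prefactor in \eqref{eq:ellipticity} is exactly what compensates the divergence of the near-diagonal geometric series as $\sigma \to 2$, giving robustness of the estimate.
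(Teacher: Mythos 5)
Your proof is correct and follows essentially the same route as the paper: split at radius $R$, decompose each region into dyadic annuli, use the volume doubling property to compare $\mu_g(A_k)$ with $\mu_g(B(x,d_x(z)))$, and observe that the prefactor $(2-\sigma)/(1-2^{-(2-\sigma)})$ stays bounded as $\sigma\to 2$. The only cosmetic difference is that you bound $d_x(z)^2$ from above and $d_x(z)^\sigma$ from below separately, whereas the paper bounds $d_x(z)^{2-\sigma}$ directly; this costs at most an extra factor of $2^\sigma \le 4$ and changes nothing.
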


\begin{proof}
By the volume doubling property \eqref{eq:VD}, we have
\begin{align} \label{eq:B_R}
\begin{split}
\int_{B_R(x)} \frac{(2-\sigma) d_x(z)^{2-\sigma}}{\mu_g(B(x, d_x(z)))} \, \d V(z)
&= \sum_{k=0}^\infty \int_{B(x, 2^{-k} R) \setminus B(x, 2^{-(k+1)}R)} \frac{(2-\sigma) d_x(z)^{2-\sigma}}{\mu_g(B(x, d_x(z)))} \, \d V(z) \\
&\leq \sum_{k=0}^\infty \frac{\mu_g(B(x, 2^{-k}R))}{\mu_g(B(x, 2^{-(k+1)}R))} (2-\sigma) 2^{-k(2-\sigma)} R^{2-\sigma} \\
&\leq 2^n \frac{2-\sigma}{1-2^{-(2-\sigma)}} R^{2-\sigma} \leq C(n) R^{2-\sigma},
\end{split}
\end{align}
where we observed in the last inequality that the function $t/(1-2^{-t})$ is bounded in $[0, 2]$ from above. Similarly, by the volume doubling property \eqref{eq:VD} again, we obtain
\begin{align} \label{eq:B_R_complement}
\begin{split}
\int_{M \setminus B_R(x)} \frac{(2-\sigma) R^2}{\mu(B(x, d_x(z))) d_x(z)^\sigma} \, \d V(z)
&= \sum_{k=0}^\infty \int_{B(x, 2^{k+1}R) \setminus B(x, 2^k R)} \frac{(2-\sigma) R^2}{\mu(B(x, d_x(z))) d_x(z)^\sigma} \, \d V(z) \\
&\leq \sum_{k=0}^\infty \frac{\mu_g(B(x, 2^{k+1}R))}{\mu_g(B(x, 2^k R))} (2-\sigma) 2^{-k\sigma} R^{2-\sigma} \\
&\leq 2^n \frac{2 - \sigma}{1-2^{-\sigma}} R^{2-\sigma} \leq \frac{2^{n+1}}{1-2^{-\sigma_0}} R^{2-\sigma}.
\end{split}
\end{align}
Therefore, \eqref{eq:integrability} follows by combining the inequalities \eqref{eq:B_R} and \eqref{eq:B_R_complement}.
\end{proof}

Using \Cref{lem:integrability}, we show that $Lu$ is well-defined.

\begin{lemma} \label{lem:well-definedness}
Let $\sigma_0 \in (0,2)$ and assume $\sigma \in [\sigma_0, 2)$. For $x \in M$, let $K$ be the supremum of the sectional curvatures in $B_{\mathrm{inj}(x)}(x)$ and let $2R < \mathrm{inj}(x) \land \frac{\pi}{\sqrt{K}}$. Then, for $L \in \mathcal{L}_0$ and for $u \in C^2(\overline{B_R(x)}) \cap L^\infty(M)$,
\begin{equation} \label{eq:antisymmetric_part}
\ve Lu(x) \ve \leq C\Lambda \left( \Ve u \Ve'_{C^2(\overline{B_R(x)})} + \Ve u \Ve_{L^\infty(M)} \right) R^{-\sigma},
\end{equation}
where $C = C(n, \sigma_0) > 0$ is a universal constant. Therefore, the value of $Lu$ at $x$ is well-defined.
\end{lemma}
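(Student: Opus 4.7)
The plan is to use the decomposition \eqref{eq:Lu} of $Lu(x)$ into the near integral over $B_R(x)$ and the tail integral over $M\setminus B_R(x)$, and bound each piece separately via \Cref{lem:integrability}.

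For the tail, apply the crude bound $\ve u(z)-u(x)\ve \le 2\Ve u\Ve_{L^\infty(M)}$ together with the upper ellipticity estimate \eqref{eq:ellipticity}; the resulting integral is precisely the one appearing in \eqref{eq:B_R_complement}, and produces a contribution of size $C\Lambda\Ve u\Ve_{L^\infty(M)}R^{-\sigma}$.

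For the near integral the idea is to pass to normal coordinates $\xi=\exp_x^{-1}z$ on $T_xM$ (a diffeomorphism onto $B_R(x)$ since $R<\mathrm{inj}(x)$) and write $\d V(z)=J_x(\xi)\,\d\xi$, where $J_x$ is the exponential Jacobian. The symmetry \eqref{eq:symmetry} reads $\nu_x(\exp_x\xi)=\nu_x(\exp_x(-\xi))$, so symmetrizing the principal-value integrand under $\xi\leftrightarrow-\xi$ and decomposing $J_x=J^s+J^a$ into its even and odd parts, the near integral becomes the absolutely convergent
\begin{equation*}
\int_{B_R(0)\subset T_xM}\nu_x(\exp_x\xi)\bigl[\delta(u,x,\exp_x\xi)\,J^s(\xi)+u^a(\xi)\,J^a(\xi)\bigr]\,\d\xi,
\end{equation*}
with $u^a(\xi)=\tfrac12\bigl(u(\exp_x\xi)-u(\exp_x(-\xi))\bigr)$. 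The symmetric piece is handled by the Taylor bound $\ve\delta(u,x,z)\ve\le\tfrac12\Ve D^2u\Ve_{L^\infty(\overline{B_R(x)})}d_x(z)^2$, which follows from integrating $(u\circ\gamma)''(t)=D^2u(\gamma(t))(\gamma'(t),\gamma'(t))$ along the geodesic through $x$ joining $\mathcal{T}_x(z)$ and $z$. Combined with \eqref{eq:ellipticity} and \Cref{lem:integrability}, this yields a contribution of size $C\Lambda\Ve D^2u\Ve_\infty R^{2-\sigma}$.

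The antisymmetric piece is the main obstacle: $u^a$ has only first-order smallness $\ve u^a(\xi)\ve\le\Ve Du\Ve_\infty\ve\xi\ve_g$, and a naive bound $\ve J^a\ve\le 1$ would produce a non-integrable kernel for $\sigma\ge 1$. Here we invoke the smoothness of the volume form observation flagged in the introduction: because $J_x(0)=1$ and $\nabla J_x(0)=0$, the odd part $J^a(\xi)$ vanishes to higher order at $\xi=0$, and a Jacobi-field/Rauch-type comparison under the standing assumptions $\mathrm{Sect}\ge 0$ and $R<\tfrac{\pi}{\sqrt K}\wedge\mathrm{inj}(x)$ produces a pointwise estimate of the form $\ve J^a(\xi)\ve\le C\ve\xi\ve_g\,J^s(\xi)$. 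Plugging this in, the antisymmetric contribution is again controlled by $C\Lambda\Ve Du\Ve_\infty R^{2-\sigma}$ via \Cref{lem:integrability}. Summing the three estimates and using $R^2\Ve D^2u\Ve_\infty+R\Ve Du\Ve_\infty\le\Ve u\Ve'_{C^2(\overline{B_R(x)})}$ yields \eqref{eq:antisymmetric_part}.
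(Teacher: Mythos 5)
Your overall strategy mirrors the paper's proof: both decompose $Lu(x)$ into a near piece on $B_R(x)$ plus a tail, symmetrize the near piece under $\xi\leftrightarrow-\xi$ (i.e., $z\leftrightarrow\mathcal{T}_x(z)$), split the volume element into its even and odd parts, and use \Cref{lem:integrability} to close each term. Your $J^s,J^a$ decomposition coincides with the paper's $\mathrm{d}V_s,\mathrm{d}V_a$, and your symmetric-piece and tail bounds are exactly the paper's $I_1$ and $I_3$. The one genuine difference is the antisymmetric term $I_2$: the paper bounds $u(z)-u(x)$ crudely by $2\Vert u\Vert_{L^\infty(B_R)}$ and then extracts the full second-order smallness of the odd volume density, namely $\vert\mathrm{d}V_a\vert\leq C(n)\,K\,d_x(z)^2\,\mathrm{d}V$, so that $\vert I_2\vert\leq C\Lambda\Vert u\Vert_\infty KR^{2-\sigma}$, which the radius constraint $2R<\pi/\sqrt{K}$ turns into $C\Lambda\Vert u\Vert_\infty R^{-\sigma}$ without ever invoking $Du$. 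You instead use first-order smallness of the odd part of $u$ and only first-order vanishing of $J^a$.

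There is a bookkeeping gap in your handling of the antisymmetric term. You assert a pointwise estimate of the form $\vert J^a(\xi)\vert\leq C\vert\xi\vert_g\,J^s(\xi)$, but dimensionally $C$ cannot be a universal constant $C(n)$: the left side is dimensionless while $\vert\xi\vert_g$ has units of length, so $C$ must carry a factor of $\sqrt{K}$ (or $1/R$). What the Rauch comparison with $0\leq\mathrm{Sect}\leq K$ and $\sqrt{K}\vert\xi\vert<\pi/2$ actually gives is $\vert J^a(\xi)\vert\leq C(n)\,K\vert\xi\vert^2\,J^s(\xi)$, hence at best $\vert J^a(\xi)\vert\leq C(n)\sqrt{K}\,\vert\xi\vert\,J^s(\xi)$. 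If you track that $\sqrt{K}$, your antisymmetric contribution becomes $C\Lambda\Vert Du\Vert_\infty\sqrt{K}\,R^{2-\sigma}$, and the radius hypothesis $\sqrt{K}R<\pi/2$ reduces this to $C\Lambda\Vert Du\Vert_\infty R^{1-\sigma}\leq C\Lambda\Vert u\Vert'_{C^2}R^{-\sigma}$, closing the estimate. As currently written, with an unqualified constant $C$, you would obtain $C\Lambda\Vert Du\Vert_\infty R^{2-\sigma}\leq C\Lambda\Vert u\Vert'_{C^2}R^{1-\sigma}$, which is off by a factor of $R$ from the claimed bound. So the missing step is to make the $\sqrt{K}$-dependence (or $K$-dependence) of the odd-Jacobian estimate explicit and then invoke the hypothesis $2R<\pi/\sqrt{K}$ to absorb it, exactly as the paper does for $I_2$.
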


\begin{proof}
By assuming $R$ sufficiently small, we may assume that $u$ is $C^2(\overline{B_R(x)})$ and bounded in $M$.

Let us decompose the measure into the symmetric and antisymmetric parts with respect to $x$, that is, $\d V(z) = \d V_s(z) + \d V_a(z)$, where $\d V_s(z) := \frac{1}{2} (\d V(z) + \d V(\mathcal{T}_x(z)))$ and $\d V_a(z) := \frac{1}{2} (\d V(z) - \d V(\mathcal{T}_x(z)))$. Then, for $L \in \mathcal{L}_0$ we have
\begin{align} \label{eq:Lu3}
\begin{split}
Lu(x)
=~& (2-\sigma) \int_{B_R(x)} \delta(u,x,z) \nu_x(z) \, \d V_s(z) \\
&+ (2-\sigma) \int_{B_R(x)} (u(z) - u(x)) \nu_x(z) \, \d V_a(z) \\
&+ (2-\sigma) \int_{M \setminus B_R(x)} (u(z) - u(x)) \nu_x(z) \, \d V(z) =: I_1 + I_2 + I_3.
\end{split}
\end{align}
We may apply \Cref{lem:integrability} for $\d V_s$ and $\d V$ to obtain $\ve I_1 \ve \leq C\Lambda \Ve u \Ve'_{C^2(\overline{B_R(x)})} R^{-\sigma}$ and $\ve I_3 \ve \leq C\Lambda \Ve u \Ve_{L^\infty(M)} R^{-\sigma}$, respectively. For $I_2$, we observe that
\begin{align*}
\ve I_2 \ve
&\leq \Lambda (2-\sigma) \Ve u \Ve_{L^\infty(B_R)} \int_{B_R(x)} \frac{1}{\mu_g(B(x, d_x(z))) d_x(z)^\sigma} \, \ve \d V_a \ve(z) \\
&\leq \Lambda (2-\sigma) \Ve u \Ve_{L^\infty(B_R)} \int_0^R \int_{\partial B_1} \frac{t^2}{\mu_{g^\ast}(B(0, t)) t^\sigma} \bigg( t^{n-1} - \bigg( \frac{\sin(\sqrt{K}t)}{\sqrt{K}} \bigg)^{n-1} \bigg) \, \d v \, \d t,
\end{align*}
where $g^\ast$ is the induced metric. Note that the inequalities
\begin{equation*}
t^{n-1} - \bigg( \frac{\sin(\sqrt{K}t)}{\sqrt{K}} \bigg)^{n-1} \leq \frac{n-1}{3!} (\sqrt{K}t)^2 t^{n-1}, \quad t \sqrt{K} \leq \sqrt{6},
\end{equation*}
and
\begin{equation*}
t \leq \frac{\pi}{2} \frac{\sin (\sqrt{K}t)}{\sqrt{K}}, \quad t \sqrt{K} \leq \frac{\pi}{2},
\end{equation*}
can be applied to obtain that
\begin{equation*}
t^{n-1} - \bigg( \frac{\sin(\sqrt{K}t)}{\sqrt{K}} \bigg)^{n-1} \leq C Kt^2 \bigg( \frac{\sin(\sqrt{K}t)}{\sqrt{K}} \bigg)^{n-1},
\end{equation*}
since we have assumed that $2R < \pi/\sqrt{K}$. Therefore, 
\begin{equation*}
\ve I_2 \ve \leq C \Lambda (2-\sigma) \Ve u \Ve_{L^\infty(B_R)} \int_{B_R(x)} \frac{K d_x^2(z)}{\mu_g(B(x, d_x(z))) d_x(z)^\sigma} \, \d V(z).
\end{equation*}
By \Cref{lem:integrability}, we arrive at $\ve I_2 \ve \leq C\Lambda \Ve u \Ve_{L^\infty(B_R)} K R^{2-\sigma}$. Again, by using $2R < \pi/\sqrt{K}$,
\begin{equation} \label{eq:I_2_anti}
\ve I_2 \ve \leq C\Lambda \Ve u \Ve_\infty R^{-\sigma}.
\end{equation}
The estimate \eqref{eq:I_2_anti} together with estimates for $I_1$ and $I_3$ finishes the proof.
\end{proof}

Here are two assumptions on manifold we are going to use throughout the paper.
\begin{itemize}
\item
(Reversed volume doubling property) Let us assume that there is a constant $a_1 \in (0, 1]$ such that
\begin{equation} \label{eq:RVD} \tag{RVD}
\frac{\mu_g(B_R(x))}{\mu_g(B_r(x))} \geq a_1 \left( \frac{R}{r} \right)^n, \quad 0 < r \leq R < \mathrm{inj}(x).
\end{equation}

\item
(Comparability of volumes of balls with different centers) Let us assume that there is a constant $a_2 \geq 1$ such that
\begin{equation} \label{eq:comparability} \tag{Comp}
a_2^{-1} \leq \frac{\mu_g(B_R(x_1))}{\mu_g(B_R(x_2))} \leq a_2, \quad 0 < R < \mathrm{inj}(x_1) \land \mathrm{inj}(x_2).
\end{equation}
\end{itemize}

Let us close this section with the following generalization of Euclidean dyadic cubes that will be used in the decomposition of the contact set and in the Calder\'on–Zygmund technique.

\begin{theorem} [Christ \cite{Chr90}] \label{thm:dyadic_cubes}
There is a countable collection $\mathcal{D} := \lbrace Q_\alpha^j \subset M : j \in \mathbb{Z}, \alpha \in I_j \rbrace$ of open sets and constants $c_1, c_2 > 0$ (with $2c_1 \leq c_2$), and $\delta_0 \in (0,1)$, depending only on $n$, such that
\begin{enumerate}[(i)]
\item
$\mu(M \setminus \cup_\alpha Q_\alpha^j) = 0$ for each $j \in \mathbb{Z}$,
\item
if $i \geq j$, then either $Q_\beta^i \subset Q_\alpha^j$ or $Q_\beta^i \cap Q_\alpha^j = \emptyset$,
\item
for each $(j, \alpha)$ and each $i < j$, there is a unique $\beta$ such that $Q_\alpha^j \subset Q_\beta^i$, 
\item
$\mathrm{diam}(Q_\alpha^j) \leq c_2 \delta_0^j$, and
\item
each $Q_\alpha^j$ contains some ball $B(z_\alpha^j, c_1 \delta_0^j)$.
\end{enumerate}
\end{theorem}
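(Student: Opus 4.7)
The plan is to follow Christ's original construction adapted to our manifold setting, using only the doubling property coming from nonnegative Ricci curvature. For each scale $j \in \mathbb{Z}$, I would fix a maximal $\delta_0^j$-separated set $\{z_\alpha^j\}_{\alpha \in I_j} \subset M$, where $\delta_0 \in (0,1)$ will be chosen small enough at the end. Maximality ensures that the balls $B(z_\alpha^j, \delta_0^j)$ cover $M$, while the separation makes the balls $B(z_\alpha^j, \delta_0^j/2)$ pairwise disjoint. Property (v) will then force the choice $c_1 \leq 1/2$.

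Next I would build a tree structure on $\bigsqcup_j I_j$ by assigning to each $\alpha \in I_{j+1}$ a unique parent $\gamma(\alpha) \in I_j$ chosen from the (nonempty, by maximality) set of indices $\beta$ with $d(z_\alpha^{j+1}, z_\beta^j) < \delta_0^j$. Writing $\alpha \preceq \beta$ if $\beta$ is an ancestor of $\alpha$, I would provisionally define
\begin{equation*}
\widetilde{Q}_\alpha^j := \bigcup_{k \geq j} \bigcup_{\beta \in I_k,\; \beta \preceq \alpha} B\bigl(z_\beta^k, c_1 \delta_0^k\bigr),
\end{equation*}
and then set $Q_\alpha^j$ to be a suitable open modification (for instance, the interior of the closure) to arrange the almost-everywhere partition. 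Properties (ii) and (iii) are then automatic from the tree structure: a descendant cube is contained in an ancestor cube, and two cubes at the same or different scales that are not related by $\preceq$ are built from disjoint sources. Property (i) follows because the union over all $\alpha \in I_j$ of $B(z_\alpha^j,\delta_0^j)$ covers $M$, and the boundary overlap has measure zero.

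The two quantitative statements (iv) and (v) are where the work lies, and (iv) is the main obstacle. The inclusion $B(z_\alpha^j, c_1 \delta_0^j) \subset Q_\alpha^j$ in (v) is easy from the definition once $c_1$ is small. For the diameter bound, I would argue inductively: any descendant $z_\beta^k$ of $z_\alpha^j$ with $k \geq j$ satisfies, by telescoping over the parent chain,
\begin{equation*}
d(z_\beta^k, z_\alpha^j) \leq \sum_{i=j}^{k-1} \delta_0^i < \frac{\delta_0^j}{1 - \delta_0},
\end{equation*}
so $\mathrm{diam}(\widetilde{Q}_\alpha^j) \leq \tfrac{2\delta_0^j}{1-\delta_0} + 2c_1\delta_0^j$, giving (iv) with $c_2 := \tfrac{2}{1-\delta_0} + 2c_1$. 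The compatibility constraint $2c_1 \leq c_2$ is then satisfied for any $c_1 < 1/2$.

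The delicate point — and the one that genuinely uses geometry rather than just a tree — is proving that the modified open sets $Q_\alpha^j$ still partition $M$ up to a null set, i.e. that the boundary layer where the naive construction is ambiguous has measure zero. Here I would invoke the volume doubling property \eqref{eq:VD} from Gromov's theorem to show that the measure of the ``annular'' set within distance $\eta \delta_0^j$ of the boundary of the Voronoi decomposition of $\{z_\alpha^j\}$ decays as $\eta \to 0$, and then iterate Christ's geometric bootstrap across scales. Choosing $\delta_0$ sufficiently small (depending on the doubling constant, hence on $n$) makes the accumulated boundary summable and of measure zero, completing the verification of (i). All constants $c_1, c_2, \delta_0$ then depend only on $n$, as claimed.
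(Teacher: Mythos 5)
The paper does not prove this statement; it is taken as a black box from Christ \cite{Chr90}, so there is no internal proof to compare against. Your sketch correctly identifies the skeleton of Christ's construction: maximal $\delta_0^j$-separated nets, a parent map defining a tree, the diameter bound obtained by telescoping along the parent chain (which does give $c_2=\tfrac{2}{1-\delta_0}+2c_1$), property (v) from reflexivity of $\preceq$, and the doubling inequality as the essential geometric input for the measure-theoretic part.

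There is, however, a genuine gap in your verification of (ii). You claim that two cubes at the same level $j$ built from disjoint sets of descendant centers are automatically disjoint, but a union of open balls around disjoint point sets need not be disjoint. Concretely, take $\alpha\neq\alpha'$ at level $j$ and a descendant $\gamma'\preceq\alpha'$ at some level $k'>j$. The triangle inequality only yields
\begin{equation*}
d\bigl(z_\alpha^j,z_{\gamma'}^{k'}\bigr)\;\geq\; d\bigl(z_\alpha^j,z_{\alpha'}^{j}\bigr)-d\bigl(z_{\alpha'}^{j},z_{\gamma'}^{k'}\bigr)\;>\;\delta_0^j-\frac{\delta_0^j}{1-\delta_0},
\end{equation*}
and the right-hand side is negative, so no separation between $z_\alpha^j$ and the descendant centers of $\alpha'$ is guaranteed; in fact a descendant $z_{\gamma'}^{k'}$ of $\alpha'$ can lie inside $B(z_\alpha^j,c_1\delta_0^j)$, forcing $\widetilde Q_\alpha^j\cap\widetilde Q_{\alpha'}^j\neq\emptyset$ no matter how small $c_1$ is chosen. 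Passing to the interior of the closure does not remove such overlaps. Christ's actual argument therefore does not define the cubes as a union of balls: the same-level cubes are obtained from a Voronoi-type partition with a scale-consistent tie-breaking rule (equivalently, after proving the quantitative small-boundary estimate one can trim the naive sets), and it is only after this partition step that the tree gives (ii) and (iii). You invoke the Voronoi decomposition only for the null-set claim in (i), but it is already needed for (ii). As written, the proposal does not establish the theorem.
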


\section{Discrete ABP-type estimates} \label{sec:ABP}

We begin with a discrete version of the ABP-type estimate which will play a key role in the estimates of sub-level sets of $u$ in \Cref{sec:L_eps_estimate}. Cabr\'e suggested in \cite{Cab97} the use of distance squared functions instead of affine functions as touching functions due to the fact that there is no non-constant affine functions in general. This leads us to the smooth map
\begin{equation} \label{eq:phi}
y = \exp_x \nabla (R^2 u)(x).
\end{equation}
That is, if $u$ is a smooth function satisfying $u \geq 0$ in $M \setminus B_{5R}$ and $\inf_{B_{2R}} u \leq 1$, then for any point $y \in B_R$, the minimum of the function $R^2 u + \frac{1}{2} d_y^2$ in $\overline{B}_{5R}$ is achieved at some point $x \in B_{5R}$, leading us to the smooth map \eqref{eq:phi}. For the second order operators, the Jacobian of this smooth map is controlled by the determinant of $D^2 u$, which is in turn controlled by $f$ through the equations. However, since nonlocal operators have order strictly less than two, we cannot go through the determinant of $D^2 u$.

Motivated by the idea of proof of the discrete ABP estimates in \cite{CS09}, we therefore find a small ring around each contact point, in which $u$ stays quadratically close to the envelope. The main difference is that we need to construct the envelope using the distance squared functions instead of affine functions. For each $y \in B_R$, there is a unique paraboloid
\begin{equation*}
P_y(z) = c_y - \frac{1}{2R^2} d_y(z)^2
\end{equation*}
that touches $u$ from below, with a contact point $x \in B_{5R}$. We define the {\it envelope $\Gamma$ of $u$} by
\begin{equation*}
\Gamma(z) = \sup_{y \in B_R} P_y(z),
\end{equation*}
and the {\it contact set} $A = \lbrace x \in B_{5R}: u(x) = \Gamma(x) \rbrace$. In the sequel, let us fix the universal constants
\begin{equation} \label{eq:rho}
\rho_1 = 2\left( \frac{1}{a_1} \right)^{1/n} \lor \frac{1}{\delta_0} > 1 \quad\text{and} \quad \rho_0 < \frac{2c_1}{(3+4/\rho_1)c_2} \delta_0 < 1,
\end{equation}
where $c_1$, $c_2$, and $\delta_0$ are constants, depending only on $n$, in \Cref{thm:dyadic_cubes}, and $a_1$ is the constant in the reverse volume doubling property \eqref{eq:RVD}. This section is devoted to the following nonlocal ABP-type estimate on a Riemannian manifold with nonnegative sectional curvature that generalizes the result in \cite{Cab97}. Recall that $\mathcal{D}$ is a family of dyadic cubes in \Cref{thm:dyadic_cubes}.

\begin{lemma} \label{lem:key}
Let $\sigma_0 \in (0,2)$ and assume $\sigma \in [\sigma_0, 2)$. For $z_0 \in M$, let $K$ be the supremum of the sectional curvatures in $B_{\mathrm{inj}(z_0)}(z_0)$ and let $15R < \mathrm{inj} (z_0) \land \frac{\pi}{\sqrt{K}}$. Let $u \in C^2(B_{5R}(z_0)) \cap L^\infty(M)$ be a function on $M$ satisfying $u \geq 0$ in $M \setminus B_{5R}(z_0)$ and $\inf_{B_{2R}(z_0)} u \leq 1$, and let $\Gamma$ be the envelope of $u$. If $\mathcal{M}^-_{\mathcal{L}_0} u \leq f$ in $B_{5R}(z_0)$, then
\begin{equation} \label{eq:key}
\mu_g(B_R(z_0)) \leq C \sum_{\mathcal{D}_1} \bigg( \Lambda + R^\sigma \max_{\overline{Q}_\alpha^j} f \bigg)_+^n \mu_g(Q_\alpha^j),
\end{equation}
where $\mathcal{D}_1 = \lbrace Q_\alpha^j \rbrace$ is a finite subcollection of $\mathcal{D}$ of dyadic cubes, with $\mathrm{diam}(Q^j_\alpha) \leq \rho_0 \rho_1^{-1/(2-\sigma)} R$, that intersect with the contact set $A$ and satisfy $A \subset \cup_j \overline{Q}_\alpha^j$. The constant $C$ depends only on $n$, $\lambda$, $a_1$, $a_2$, and $\sigma_0$.
\end{lemma}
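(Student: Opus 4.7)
The plan is to follow the Caffarelli--Silvestre discrete ABP strategy, with Cabr\'e's substitution of affine touching functions by the squared-distance paraboloids $P_y$, and to handle the contact-map Jacobian via Jacobi fields since the operator has order strictly less than two. For each $y \in B_R(z_0)$ the function $R^2 u + \tfrac{1}{2} d_y^2$ is nonnegative on $M \setminus B_{5R}(z_0)$ and at most $\tfrac{11}{2}R^2$ at some point of $B_{2R}(z_0)$, while on $\partial B_{5R}(z_0)$ it is at least $8R^2$; hence it attains its minimum over $\overline{B_{5R}(z_0)}$ at an interior contact point $x = x(y) \in B_{5R}(z_0) \cap A$. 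The Gauss lemma gives $\nabla(\tfrac{1}{2} d_y^2)(x) = -\exp_x^{-1}(y)$, so the critical-point condition reads $y = \exp_x(R^2 \nabla u(x)) =: \Phi(x)$. Thus every $y \in B_R(z_0)$ has a preimage in $A$ under $\Phi$, and the Riemannian area formula yields
\begin{equation*}
\mu_g(B_R(z_0)) \leq \int_A \ve \mathrm{Jac}\, \Phi(x) \ve \, \d V(x).
\end{equation*}

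Next I cover $A$ by dyadic cubes $Q_\alpha^j$ from Christ's \Cref{thm:dyadic_cubes} of diameter at most $\rho_0 \rho_1^{-1/(2-\sigma)} R$ and aim to establish the pointwise bound $\ve \mathrm{Jac}\, \Phi(x) \ve \leq C(\Lambda + R^\sigma \max_{\overline{Q}_\alpha^j} f)_+^n$ for each $x \in Q_\alpha^j \cap A$. The differential $d\Phi(x)$ factors as $d(\exp_x)_{R^2 \nabla u(x)} \circ (\xi \mapsto R^2 \nabla_\xi \nabla u(x))$; the first factor is controlled by Jacobi fields along the geodesic from $x$ to $\Phi(x)$, and since $\mathrm{Sect} \geq 0$ together with $\Ve R^2 \nabla u(x) \Ve_g \leq CR$ keep this geodesic safely within $\pi/\sqrt{K}$, its norm is uniformly bounded. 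Therefore $\ve \mathrm{Jac}\, \Phi(x) \ve \leq C (1 + R^2 \Ve \nabla^2 u(x) \Ve)^n$, interpreted in the averaged sense appropriate to nonlocal operators.

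The analytic core is the replacement of this averaged Hessian by $\Lambda + R^\sigma f(x)$ using $\mathcal{M}^-_{\mathcal{L}_0} u(x) \leq f(x)$. At $x \in A$ we have $u \geq P_y$ globally with equality at $x$, and \Cref{lem:dist_squared} applied to $d_y^2$ gives $\delta(u,x,z) \geq -\tfrac{1}{2R^2} \Ve \exp_x^{-1}(z) \Ve_g^2$ for $z$ in the convexity range. Decomposing $Lu(x)$ as in \eqref{eq:Lu3} and using \Cref{lem:integrability} for the symmetric part together with the antisymmetric estimate \eqref{eq:I_2_anti} from \Cref{lem:well-definedness}, I run the Caffarelli--Silvestre ring-selection argument to produce a scale $r_x \leq \rho_0 \rho_1^{-1/(2-\sigma)} R$ at which the quadratic closeness of $u$ to $\Gamma$ is saturated; this forces $\ve \mathrm{Jac}\, \Phi(x) \ve \leq C(\Lambda + R^\sigma f(x))_+^n$. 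The constants $\rho_0, \rho_1$ in \eqref{eq:rho} are engineered so that $B_{r_x}(x)$ lies in a single dyadic cube whose $\mu_g$-volume, via \eqref{eq:RVD} and \eqref{eq:comparability}, is comparable to $\mu_g(B_{r_x}(x))$; summing the resulting Jacobian bound over the selected cubes then delivers \eqref{eq:key}.

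The principal obstacle is this Hessian-substitute step. In the local limit $\sigma = 2$ the Jacobian equals $\det(I - R^2 \mathrm{Hess}\, u)$ and one closes directly via $\mathcal{M}^-(\nabla^2 u) \leq f$; here, lacking a pointwise Hessian control for $\sigma<2$, one must both run the dyadic ring iteration to extract the correct scale $r_x$ and simultaneously absorb the nonlocal antisymmetric volume contribution---absent in the Euclidean setting---into the $\Lambda$ factor through the $KR^2 \lesssim 1$ bound afforded by the radius hypothesis $15R < \mathrm{inj}(z_0) \land \pi/\sqrt{K}$ and nonnegative sectional curvature, which also guarantees that all exponential maps and envelope constructions remain within the range where \eqref{eq:Hessian} holds.
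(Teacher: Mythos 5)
The proposal has a genuine structural flaw at its central step. You open with the area formula
\[
\mu_g(B_R(z_0)) \leq \int_A \ve \mathrm{Jac}\,\Phi(x)\ve\,\d V(x),
\]
with $\Phi(x)=\exp_x(R^2\nabla u(x))$, and then aim at a pointwise bound $\ve\mathrm{Jac}\,\Phi(x)\ve \leq C(\Lambda+R^\sigma f(x))_+^n$ for $x\in A$, claiming that the Caffarelli--Silvestre ring iteration "forces" it. If such a pointwise Jacobian bound held, the area formula would directly give the full ABP estimate $\mu_g(B_R)\le C\int_A(\Lambda+R^\sigma f)_+^n\,\d V$, i.e.\ control by $\Ve\Lambda+f\Ve_{L^n(A)}^n$. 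But the paper points out, citing Guillen--Schwab, that this stronger $L^n$-on-the-contact-set estimate is \emph{false} for nonlocal operators even in the Euclidean case. The correct statement necessarily involves $\mu_g(Q_\alpha^j)$ over whole cubes $\overline{Q}_\alpha^j$, including the part $\overline{Q}_\alpha^j\setminus A$; the loss from the contact set to the cube is unavoidable. In other words, the pointwise Jacobian bound you assert cannot be true, and the ring argument does not produce one.

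This is exactly why the paper abandons the map $\Phi$ (which it calls "not appropriate as a normal map since $u$ and $\Gamma$ do not coincide outside the contact set") in favor of the possibly multivalued vertex map $\phi$, defined on all of $\overline{Q}_\alpha^j$ and not just on $A$. The paper never estimates a Jacobian or invokes the area formula. Instead, \Cref{lem:good_ring}, \Cref{lem:convexity} (the $-R^{-2}$-convexity of $\Gamma-P_y$, which your sketch omits entirely), and \Cref{lem:geometric} give a two-sided flatness estimate of $\Gamma$ near $x$, which in \Cref{lem:gradient_map} produces the set inclusion $\phi(\overline{B_{r/4}(x)})\subset B(\phi(x),\,C(\Lambda+R^\sigma f(x))_+\,r)$ via a Jacobi-field comparison; the measure bound on $\mu_g(\phi(\overline{Q}_\alpha^j))$ then comes from volume doubling and the comparability of ball volumes, not from a Jacobian. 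The passage from this per-cube inclusion to the finite selection $\mathcal{D}_1$ is the content of \Cref{lem:ABP_discrete}, which your last sentence waves at but does not carry out. So while your proposal correctly identifies the replacement of affine functions by $P_y$, the role of the ring selection, and the use of Jacobi fields, the logical backbone---deducing the key inequality from a pointwise Jacobian bound via the area formula---is the one route the paper explicitly rules out, and the parts you leave as "the ring iteration resolves it" are precisely where the real work (and the real measure loss onto the cubes) happens.
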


It is known \cite{GS12} that the estimates \eqref{eq:key} with the Riemann sums in the right hand side replaced by $\Ve \Lambda + f \Ve_{L^n(A)}$ fails to hold even in the case of the Euclidean space. Instead, as in \cite{CS09}, the Riemann sums of $\Lambda + f$ over the set $\cup_j \overline{Q}_\alpha^j$ need to be considered. Thus, we need information not only on the contact set, but also on $\cup_j \overline{Q}_\alpha^j \setminus A$. The map \eqref{eq:phi} is not appropriate as a normal map since $u$ and $\Gamma$ do not coincide outside the contact set. Instead, we will make use of the map $\phi$, which assigns each point $x \in M$ the vertex point $y$ of the paraboloid $P_y$, where $P_y$ is some paraboloid such that $\Gamma(x) = P_y(x)$.  Note that the map $\phi$ may be multivalued since $P_y$ may not be unique.

By using the map $\phi$, we prove the following discrete ABP-type estimates.

\begin{lemma} \label{lem:ABP_discrete}
Assume the same assumptions as in \Cref{lem:key}. There is a finite subcollection $\mathcal{D}_1 \subset \mathcal{D}$ of dyadic cubes $Q_\alpha^j$, with diameters $d_{j,\alpha} \leq \rho_0 \rho_1^{-1/(2-\sigma)} R$, such that the following holds:
\begin{enumerate}[(i)]
\item
Any two different dyadic cubes in $\mathcal{D}_1$ do not intersect.
\item
$A \subset \bigcup_{\mathcal{D}_1} \overline{Q}_\alpha^j$.
\item
$\mu_g(\phi(\overline{Q}_\alpha^j)) \leq C \left(\Lambda + R^\sigma \max_{\overline{Q}_\alpha^j} f \right)_+^n \mu_g(Q_\alpha^j)$.
\item
$\gamma \mu_g(Q_\alpha^j) \leq \mu_g \left( B(z_\alpha^j, (1+4/\rho_1)c_2 \delta_0^j) \cap \lbrace u \leq \Gamma + CR^{-2} (\Lambda + R^\sigma \max_{\overline{Q}_\alpha^j} f)_+ d_{j, \alpha}^2 \rbrace \right)$.
\end{enumerate}
The constants $C > 0$ and $\gamma > 0$ depend only on $n$, $\lambda$, $a_1$, $a_2$, and $\sigma_0$.
\end{lemma}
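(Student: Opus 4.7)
The plan is to adapt the cube-selection ABP strategy of Caffarelli--Silvestre to the Riemannian setting, with the distance-squared paraboloids $P_y$ playing the role of affine touching functions. The key analytic input is a local \emph{ring estimate}: for every contact point $\bar x \in A$, at scale $r \sim \rho_0 \rho_1^{-1/(2-\sigma)} R$ a positive fraction of the points in $B_r(\bar x)$ satisfy
\begin{equation*}
u(z) - \Gamma(z) \le C R^{-2}(\Lambda + R^\sigma f)_+\, r^2.
\end{equation*}
Once this is in place, a maximal-cube extraction from \Cref{thm:dyadic_cubes} produces the collection $\mathcal{D}_1$ and the four properties.

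To derive the ring estimate, fix a paraboloid $P_y$ with $y = \phi(\bar x) \in B_R$ and set $v := u - P_y$, so that $v \ge 0$ on $M$ and $v(\bar x) = 0$. The superadditivity of $\mathcal{M}^-_{\mathcal{L}_0}$ combined with the hypothesis $\mathcal{M}^-_{\mathcal{L}_0} u \le f$ yields
\begin{equation*}
\mathcal{M}^-_{\mathcal{L}_0} v(\bar x) \le f(\bar x) - \mathcal{M}^-_{\mathcal{L}_0} P_y(\bar x).
\end{equation*}
The Hessian of $P_y$ has operator norm at most $R^{-2}$ by the upper part of \eqref{eq:Hessian}, and combined with \Cref{lem:integrability} and the antisymmetric bookkeeping inside the proof of \Cref{lem:well-definedness}, this gives $|\mathcal{M}^{\pm}_{\mathcal{L}_0} P_y| \le C\Lambda R^{-\sigma}$ on $B_{5R}(z_0)$. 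On the other hand, since $v$ attains an absolute minimum at $\bar x$, the integrand defining $L v(\bar x)$ is nonnegative, and the infimum $\mathcal{M}^-_{\mathcal{L}_0} v(\bar x)$ is attained by the smallest admissible kernel, whence
\begin{equation*}
\lambda (2-\sigma) \int_M \frac{v(z)}{\mu_g(B(\bar x, d_{\bar x}(z))) d_{\bar x}(z)^\sigma}\, \d V(z) \le f(\bar x) + C\Lambda R^{-\sigma}.
\end{equation*}
Restricting to $B_r(\bar x)$, bounding the kernel from below by $1/(\mu_g(B_r(\bar x)) r^\sigma)$, and applying a Chebyshev-type argument delivers the ring estimate; the choice $r \sim \rho_0 \rho_1^{-1/(2-\sigma)} R$ from \eqref{eq:rho} balances the $(2-\sigma)$ factors and keeps the constants robust as $\sigma \uparrow 2$.

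Given the ring estimate, we let $\mathcal{D}_1$ be the family of maximal dyadic cubes from \Cref{thm:dyadic_cubes} with $\mathrm{diam}(Q_\alpha^j) \le \rho_0 \rho_1^{-1/(2-\sigma)} R$ that intersect $A$. Property (i) is immediate from the nested-or-disjoint structure of Christ's cubes, and (ii) follows from maximality together with $\mu_g(M \setminus \bigcup_\alpha Q_\alpha^j) = 0$. For (iv), the calibration of $\rho_0, \rho_1$ in \eqref{eq:rho} ensures that for any $\bar x \in Q_\alpha^j \cap A$ the ball $B(z_\alpha^j, (1+4/\rho_1) c_2 \delta_0^j)$ contains the ring ball $B_r(\bar x)$ of the preceding paragraph, and the reverse volume doubling \eqref{eq:RVD} with comparability \eqref{eq:comparability} then convert the ring measure estimate into the required lower bound $\gamma \mu_g(Q_\alpha^j)$. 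For (iii), we apply the ring estimate at two contact points $x_1, x_2 \in \overline{Q}_\alpha^j \cap A$ with vertices $y_i = \phi(x_i)$: comparing $P_{y_1}$ and $P_{y_2}$ with the Hessian bound \eqref{eq:Hessian} forces the Lipschitz-type estimate $d(y_1, y_2) \le C (\Lambda + R^\sigma \max_{\overline{Q}_\alpha^j} f)_+\, d(x_1, x_2)$, which together with the volume comparability gives the desired bound on $\mu_g(\phi(\overline{Q}_\alpha^j))$.

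The main obstacle is the execution of the ring estimate on a non-symmetric manifold. The volume form $\d V$ is not invariant under $\mathcal{T}_{\bar x}$, so the decomposition \eqref{eq:Lu3} contains an antisymmetric integral against $\d V_a$ which a priori has no sign even for $v \ge 0$. The saving is the estimate used inside the proof of \Cref{lem:well-definedness}: $\d V_a$ carries an extra factor of order $K d_{\bar x}^2$, so under the radius restriction $15R < \mathrm{inj}(z_0) \land \pi/\sqrt{K}$ the antisymmetric contribution is of the same order as the principal $\Lambda R^{-\sigma}$ error. Carefully tracking all constants through this chain---so that they depend only on $n, \lambda, \Lambda, a_1, a_2$, and $\sigma_0$, and so that the exponent $1/(2-\sigma)$ in the cube diameter is sharp---is the subtlest step.
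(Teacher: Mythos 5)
Your broad strategy (good-ring measure estimate $\to$ flatness of $\Gamma$ $\to$ gradient/Lipschitz bound on $\phi$ $\to$ dyadic covering) is indeed the skeleton of the paper's argument, but two substantive points in your execution do not survive scrutiny.

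First, the ring estimate does \emph{not} hold at the fixed scale $r \sim \rho_0\rho_1^{-1/(2-\sigma)}R$; for a given contact point $\bar x$ the scale at which a large good-ring fraction is guaranteed is $r_k = \rho_0 \rho_1^{-1/(2-\sigma)-k}R$ for \emph{some} $k \ge 0$ depending on $\bar x$, possibly with $k$ large. This is the whole content of \Cref{lem:good_ring}: one sums the contributions of infinitely many annuli $R_k = B_{r_k}(\bar x)\setminus B_{r_{k+1}}(\bar x)$, and the robustness as $\sigma\uparrow 2$ is extracted from the geometric series $\sum_k \rho_1^{-k(2-\sigma)} = (1-\rho_1^{-(2-\sigma)})^{-1}$, whose product with $(2-\sigma)$ is bounded below. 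Your single-scale Chebyshev argument bounds the kernel below by $1/(\mu_g(B_r)r^\sigma)$ on the whole ball, which discards the singularity of the kernel; the resulting inequality carries a bare factor of $(2-\sigma)$ on the left with nothing to balance it, so the constants blow up as $\sigma\to 2$. The compensating geometric sum over rings is essential, not a refinement.

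Second, because the good-ring scale is contact-point-dependent, the covering $\mathcal{D}_1$ cannot be taken to be ``all maximal Christ cubes of diameter $\le \rho_0\rho_1^{-1/(2-\sigma)}R$ that meet $A$.'' A cube of that fixed generation may contain a contact point whose good ring is at a much finer scale $r_k \ll r_0$, in which case the flatness of $\Gamma$ (\Cref{lem:geometric}) and the image estimate for $\phi$ (\Cref{lem:gradient_map}) only hold on $B_{r_k/2}(\bar x)$ and $B_{r_k/4}(\bar x)$, which do not cover the cube. The paper instead runs a stopping-time selection: start at generation $N$ with $c_2\delta_0^N \le r_0$, and whenever a cube fails (iii) or (iv), pass to its children intersecting $A$; termination is shown by contradiction, nesting an infinite chain to a single contact point $x_0$ and applying \Cref{lem:gradient_map} there. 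Relatedly, for (iii) your Lipschitz argument compares $\phi$ only at pairs of contact points $x_1, x_2 \in A$, but $\phi(\overline{Q}_\alpha^j)$ involves $\phi$ evaluated on the full cube, including points of $\overline{Q}_\alpha^j\setminus A$ where $u>\Gamma$ and $\phi$ may be multivalued; you need the chain \Cref{lem:convexity}--\Cref{lem:geometric}--\Cref{lem:gradient_map} to get a gradient bound for \emph{every} touching paraboloid on the whole small ball, not just at contact points.

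Your observations on the antisymmetric part of $\d V$ and on using \eqref{eq:Hessian} to bound $\mathcal{M}^{\pm}_{\mathcal{L}_0}P_y$ are correct and in line with the paper, but they are not where the difficulty lies; the adaptive-scale selection is.
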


It is easy to see that \Cref{lem:key} follows from \Cref{lem:ABP_discrete}. Indeed, since we have tested all distance squared function centered on $B_R$, we have $B_R \subset \phi(A)$. Hence, for the family $\mathcal{D}_1$ of dyadic cubes constructed in \Cref{lem:ABP_discrete} we obtain
\begin{equation*}
\mu_g(B_R) \leq \mu_g(\phi(A)) \leq \sum_{\mathcal{D}_1} \mu_g(\phi(\overline{Q}_\alpha^j)) \leq C \sum_{\mathcal{D}_1} \bigg( \Lambda + R^\sigma \max_{\overline{Q}_\alpha^j} f \bigg)_+^n \mu_g(Q_\alpha^j).
\end{equation*}
We postpone the proof of \Cref{lem:ABP_discrete} until the end of this section because we need a series of lemmas in order to prove it.

The next lemma finds a ring around a contact point, where $u$ is quadratically close to the paraboloid in a large portion of the ring. Note that if $x$ is a contact point, then $\Gamma$ is touched by $u$ from above and by some paraboloid $P_y$ from below at $x$, which shows that the paraboloid $P_y$ is uniquely determined and $\Gamma$ is differentiable at $x$. Moreover, in this case, we have
\begin{equation*}
y = \phi(x) = \exp_x \nabla (R^2 \Gamma)(x) = \exp_x \nabla (R^2 u)(x),
\end{equation*}
and hence $\phi(x)$ is also uniquely determined.

\begin{lemma} \label{lem:good_ring}
Assume the same assumptions as in \Cref{lem:key}, and let $r_k = \rho_0 \rho_1^{-1/(2-\sigma)-k} R$. Then, there exists a universal constant $C_0 > 0$, depending only on $n$, $\lambda$, $a_1$, and $\sigma_0$, such that for any $x \in A$ and any $M_0 > 0$, there is an integer $k \geq 0$ such that
\begin{equation} \label{eq:good_ring}
\mu_g(G_k) \leq \frac{C_0}{M_0} \left( \Lambda + R^\sigma f(x) \right)_+ \mu_g( R_k ),
\end{equation}
where $R_k = B(x, r_k) \setminus B(x, r_{k+1})$, $G_k = \lbrace z \in R_k : u(z) > P_y(z) + M_0(r_k/R)^2 \rbrace$, and $y = \phi(x)$.
\end{lemma}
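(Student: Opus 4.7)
The plan is to argue by contradiction. Assuming \eqref{eq:good_ring} fails for every $k \geq 0$, so that $\mu_g(G_k) > \frac{C_0}{M_0}(\Lambda + R^\sigma f(x))_+ \mu_g(R_k)$ for all $k$, I would derive a lower bound on $\mathcal{M}^-_{\mathcal{L}_0} u(x)$ that exceeds $f(x)$ once $C_0$ is chosen sufficiently large, contradicting the standing hypothesis $\mathcal{M}^-_{\mathcal{L}_0} u(x) \leq f(x)$.

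First I would set $w := u - P_y$, where $P_y$ is the paraboloid with vertex $y = \phi(x)$ touching $u$ from below at $x$. Since $u \geq \Gamma \geq P_y$ on $B_{5R}(z_0)$ we have $w \geq 0$ there and $w(x) = 0$; using $\inf_{B_{2R}} u \leq 1$ one sees $c_y$ is universally bounded, so $P_y \leq 0 \leq u$ outside $B_{5R}(z_0)$ and $w \geq 0$ on all of $M$. I would also use the fact that $\mathcal{T}_x$ preserves distance from $x$, whence $\mathcal{T}_x(B_{r}(x)) = B_{r}(x)$ for $r < \mathrm{inj}(x)$, so that the touching inequality propagates to the reflected points.

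For an arbitrary $L \in \mathcal{L}_0$, I would decompose $Lu(x) = J_1 + J_2 + J_3$ as in \eqref{eq:Lu3} with splitting radius $R$. The antisymmetric and tail parts satisfy $|J_2| + |J_3| \leq C\Lambda \Ve u \Ve_{L^\infty(M)} R^{-\sigma}$ by the arguments of \Cref{lem:well-definedness} and \Cref{lem:integrability}. For the main symmetric term $J_1$, \Cref{lem:dist_squared} with $t = 1/2$ yields $0 \geq \delta(P_y,x,z) \geq -d_x(z)^2/(2R^2)$, and the touching gives $\delta(u,x,z) \geq \delta(P_y,x,z)$ on $B_{r_0}(x)$; on each $G_k \subset R_k$, combining $u(z) > P_y(z) + M_0(r_k/R)^2$ with $u(\mathcal{T}_x z) \geq P_y(\mathcal{T}_x z)$ strengthens this to
\begin{equation*}
\delta(u,x,z) \geq \delta(P_y,x,z) + \tfrac{M_0}{2}(r_k/R)^2 \geq \tfrac{M_0}{4}(r_k/R)^2 \qquad (M_0 \geq 2).
\end{equation*}

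Integrating against $\nu_x$ with the ellipticity bounds \eqref{eq:ellipticity} (using $\lambda$ on the positive part and $\Lambda$ on the negative part of $\delta$, and $dV_s \asymp dV$ near $x$), and noting that $\rho_1 \geq 2 a_1^{-1/n}$ together with \eqref{eq:RVD} gives $\mu_g(R_k) \geq \tfrac{1}{2}\mu_g(B(x,r_k))$, the failure hypothesis yields a per-ring lower bound of order $\lambda C_0 (\Lambda + R^\sigma f(x))_+ (2-\sigma) r_k^{2-\sigma}/R^2$, minus an error of order $\Lambda \cdot (2-\sigma) r_k^{2-\sigma}/R^2$ from the negative part of $\delta$. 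A direct computation with $r_k = \rho_0 \rho_1^{-1/(2-\sigma) - k} R$ shows $(2-\sigma)\sum_{k\geq 0} r_k^{2-\sigma} \geq c(n,\sigma_0) R^{2-\sigma}$ uniformly in $\sigma \in [\sigma_0, 2)$, so summing produces
\begin{equation*}
Lu(x) \geq c\lambda C_0 (\Lambda + R^\sigma f(x))_+ R^{-\sigma} - C\Lambda R^{-\sigma} \quad \text{for every } L \in \mathcal{L}_0.
\end{equation*}
Taking the infimum contradicts $\mathcal{M}^-_{\mathcal{L}_0} u(x) \leq f(x)$ once $C_0$ is chosen large in terms of $n$, $\lambda$, $\Lambda$, $a_1$, and $\sigma_0$. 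The principal technical obstacle is the absence of a clean second-order formulation on a non-symmetric manifold (cf.\ the discussion after \eqref{eq:Lu(x)}): one must carry the antisymmetric correction $dV_a$ throughout and split the integral at $R$ rather than at the smaller $r_0$, so that the curvature bound $KR^2 \lesssim 1$ keeps $|J_2|$ and $|J_3|$ at scale $R^{-\sigma}$ rather than the much larger $r_0^{-\sigma}$.
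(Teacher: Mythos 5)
Your overall strategy—argue by contradiction, exploit that $u-P_y\geq 0$ vanishes at the contact point, restrict the positive contribution to the good rings $G_k$, and use the reverse volume doubling with the geometric series $\sum_k (2-\sigma) r_k^{2-\sigma}\gtrsim R^{2-\sigma}$—is exactly the paper's. But there is a genuine gap in how you handle the antisymmetric and tail pieces.

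You split $Lu(x)=J_1+J_2+J_3$ as in \eqref{eq:Lu3} (symmetric second-order, antisymmetric first-order, tail) and claim $|J_2|+|J_3|\leq C\Lambda\|u\|_{L^\infty(M)}R^{-\sigma}$ ``by the arguments of Lemma~\ref{lem:well-definedness}.'' That bound depends on $\|u\|_{L^\infty(M)}$, which the hypotheses do not control, yet in your final display the error term silently becomes $-C\Lambda R^{-\sigma}$ with no $\|u\|_\infty$ factor. This is not a cosmetic slip: $J_2=(2-\sigma)\int_{B_R(x)}(u(z)-u(x))\nu_x\,\mathrm{d}V_a(z)$ genuinely involves the antisymmetric integral of $u$, and $u$ may be arbitrarily large inside $B_R(x)$, so your bound cannot be improved within your decomposition. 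The paper sidesteps this precisely by \emph{not} putting $u$ into the antisymmetric part. It writes the contribution over $B_R(x)\cup B_{5R}(z_0)$ as $I_1+I_2$, where $I_1$ has integrand $(u-P_y)(z)-(u-P_y)(x)=(u-P_y)(z)\geq 0$ (a sign alone suffices—no upper bound on $u$ is ever needed there), and only the smooth, universally bounded function $\tfrac{1}{2R^2}d_y^2$ goes into $I_2$, whose symmetric/antisymmetric decomposition is then controlled by \Cref{lem:dist_squared} and the curvature argument from \Cref{lem:well-definedness}. The tail $I_3$ lives on $M\setminus(B_R(x)\cup B_{5R}(z_0))$, where $u\geq 0$ and $u(x)\leq 6$ (from the minimization and $\inf_{B_{2R}}u\leq 1$) give a universal bound. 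You actually observe all the needed facts ($u\geq P_y$ on $M$, $c_y$ universally bounded), but you never apply them to $J_2,J_3$; you would need to perform the $P_y$-subtraction \emph{before} the symmetric/antisymmetric split, i.e.\ replace $J_1+J_2$ by $K_1+K_2$ with $K_1=(2-\sigma)\int_{B_R(x)}(u-P_y)(z)\nu_x\,\mathrm{d}V$ and $K_2=(2-\sigma)\int_{B_R(x)}(P_y(z)-P_y(x))\nu_x\,\mathrm{d}V$, and enlarge the tail domain to $M\setminus(B_R(x)\cup B_{5R}(z_0))$.

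A secondary, more minor issue: the lemma is stated for all $M_0>0$, but your pointwise lower bound $\delta(u,x,z)\geq\tfrac{M_0}{4}(r_k/R)^2$ requires $M_0\geq 2$ because you absorb the curvature defect $\delta(P_y,x,z)\geq -d_x(z)^2/(2R^2)$ into the same term. This is avoidable by keeping $\delta(u-P_y,x,z)\geq\tfrac{M_0}{2}(r_k/R)^2$ and $\delta(P_y,x,z)$ separate and integrating each against the appropriate extremal kernel (which is what the paper's $I_1$/$I_2$ split accomplishes automatically).
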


\begin{proof}
Let $x \in A$. By \cite{Xu18}, we have
\begin{equation*}
\mathrm{inj} (x) \geq ( \mathrm{inj} (z_0) \land \mathrm{conj}(x) ) - d(x, z_0).
\end{equation*}
Using $\mathrm{inj} (z_0) > 15R$, $d(x, z_0) < 5R$, and
\begin{equation*}
\mathrm{conj}(x) \geq \frac{\pi}{\sqrt{K}} \land (\mathrm{inj} (z_0) - d(x, z_0)) > 10R,
\end{equation*}
we obtain that
\begin{equation} \label{eq:inj_x}
\mathrm{inj} (x) > 5R.
\end{equation}
Let us compute $\mathcal{M}^-_{\mathcal{L}_0} u(x) = \inf_{L \in \mathcal{L}_0} (I_1 + I_2 + I_3)$, where
\begin{align*}
I_1 &= \int_{B_R(x) \cup B_{5R}(z_0)} \left( u(z) + \frac{1}{2R^2} d_y^2(z) - \left( u(x) + \frac{1}{2R^2} d_y^2(x) \right) \right) \nu_x(z) \, \mathrm{d}V(z), \\
I_2 &= - \int_{B_R(x) \cup B_{5R}(z_0)} \left( \frac{1}{2R^2} d_y^2(z) - \frac{1}{2R^2} d_y^2(x) \right) \nu_x(z) \, \mathrm{d}V(z), \quad\text{and} \\
I_3 &= \int_{M \setminus (B_R(x) \cup B_{5R}(z_0))} (u(z) - u(x)) \nu_x(z) \, \mathrm{d}V(z).
\end{align*}
By the fact \eqref{eq:inj_x}, the symmetry \eqref{eq:symmetry} of density functions $\nu_x(z)$, \Cref{lem:dist_squared}, and \Cref{lem:well-definedness}, we have
\begin{equation*}
\begin{split}
I_2 
=& - \frac{1}{R^2} \int_{B_R(x)} \delta (d_y^2 /2, x, z) \nu_x(z) \, \mathrm{d}V_s(z) \\
&- \frac{1}{2R^2} \int_{B_R(x)} \left( d_y^2(z) - d_y^2(x) \right) \nu_x(z) \, \mathrm{d}V_a(z) \\
&- \frac{1}{2R^2} \int_{B_{5R}(z_0) \setminus B_R(x)} \left( d_y^2(z) - d_y^2(x) \right) \nu_x(z) \, \mathrm{d}V(z) \\
\geq& -C \Lambda R^{-\sigma},
\end{split}
\end{equation*}
where $C = C(n, \sigma_0)$ is some universal constant.

On the other hand, we know that $u(x) \leq u(x) + \frac{1}{2R^2} d_y^2(x) \leq \inf_{B_{2R}} (u+\frac{1}{2R^2} d_y^2) \leq 11/2 < 6$. This fact together with the assumption that $u \geq 0$ in $M \setminus B_{5R}(z_0)$ leads us to
\begin{equation*}
I_3 \geq - \int_{M \setminus B_R(x)} \frac{6\Lambda (2-\sigma)}{\mu_g(B(x, d_x(z))) d_x(z)^\sigma} \, \mathrm{d}V(z) \geq -C\Lambda R^{-\sigma}
\end{equation*}
by the similar argument.

Let us now estimate $I_1$. Since the contact point $x$ minimizes the function $u + \frac{1}{2R^2} d_y^2$, the integrand in $I_1$ is nonnegative. Thus, we have
\begin{equation*}
I_1 \geq \lambda (2-\sigma) \sum_{k=0}^\infty \int_{G_k} \frac{u(z) + \frac{1}{2R^2} d_y^2(z) - \left( u(x) + \frac{1}{2R^2} d_y^2(x) \right)}{\mu_g(B(x, d_x(z))) d_x(z)^\sigma} \, \mathrm{d}V(z).
\end{equation*}
Let us assume to the contrary that \eqref{eq:good_ring} does not hold for all $k \geq 0$, that is, 
\begin{equation} \label{eq:hypothesis}
\mu_g( G_k ) > \frac{C_0}{M_0} (\Lambda + R^\sigma f(x))_+ \mu_g( R_k ) \quad\text{for all}~ k \geq 0,
\end{equation}
for some $C_0 > 0$ that will be chosen at the end of the proof. If $z \in G_k$, then $u(z) + \frac{1}{2R^2} d_y^2(z) - \left( u(x) + \frac{1}{2R^2} d_y^2(x) \right) \geq M_0 (r_k/R)^2$. Thus, using \eqref{eq:hypothesis} and the reverse volume doubling property \eqref{eq:RVD}, we obtain the lower bound of $I_1$ as
\begin{align*}
I_1 
&\geq \lambda (2-\sigma) C_0 \sum_{k=0}^\infty \frac{(r_k / R)^2}{\mu_g(B(x, r_k)) r_k^\sigma} \left( \Lambda + R^\sigma f(x) \right)_+ \mu_g(R_k) \\
&= \lambda \frac{2-\sigma}{\rho_1} C_0 \sum_{k=0}^\infty \rho_0^{2-\sigma} \rho_1^{-k(2-\sigma)} R^{-\sigma} \left( 1- \frac{\mu_g(B(x, r_{k+1}))}{\mu_g(B(x, r_k))} \right) \left( \Lambda + R^\sigma f(x) \right)_+ \\
&\geq \lambda \frac{\rho_0^2}{\rho_1} C_0 \frac{2-\sigma}{1-\rho_1^{-(2-\sigma)}} \left( 1- \frac{1}{a_1 \rho_1^n} \right) \left( \Lambda R^{-\sigma} + f(x) \right)_+.
\end{align*}
Recalling \eqref{eq:rho} and observing that the function $t/(1-\rho_1^{-t})$ is bounded away from 0 in $[0, 2]$, we arrive at
\begin{equation*}
I_1 \geq c_1 C_0 \left( \Lambda R^{-\sigma} + f(x) \right)_+,
\end{equation*}
where $c_1 = c_1(n, \lambda, a_1) > 0$.

We have obtained that
\begin{equation*}
f(x) \geq \mathcal{M}^-_{\mathcal{L}_0} u(x) \geq \inf_{L \in \mathcal{L}_0} (I_1 + I_2 + I_3) \geq c_1 C_0 \left( \Lambda R^{-\sigma} + f(x) \right)_+ - C\Lambda R^{-\sigma}.
\end{equation*}
Therefore, by taking $C_0$ sufficiently large, we arrive at a contradiction.
\end{proof}

The next lemma shows that the function $\Gamma - P_y$ is $-R^{-2}$-convex in the sense of second order incremental quotients.

\begin{lemma} \label{lem:convexity}
Let $x \in A$, $y \in \phi(x)$, $K = K_{\mathrm{max}}(B_{\mathrm{inj}(y)}(y))$, and let $\rho > 0$ satisfy \eqref{eq:rho_K}. For $z \in B_\rho(y)$, let $\xi \in T_z M$ be such that $\exp_z(s \xi) \in B_\rho(y)$ for all $s \in (-1, 1)$. Then,
\begin{equation} \label{eq:convexity}
(\Gamma - P_y)(z) \leq (1-t) (\Gamma - P_y)(z_1) + t(\Gamma - P_y)(z_2) + \frac{1}{2R^2} t(1-t) \ve \xi \ve_g^2
\end{equation}
for all $t \in (0,1)$, where $z_1 = \exp_z(t \xi)$ and $z_2 = \exp_z((1-t)(-\xi))$.
\end{lemma}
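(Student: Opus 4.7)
The approach I would take is based on the observation that $\Gamma = \sup_{y' \in B_R} P_{y'}$ and that $P_y$ is independent of $y'$, so that
\begin{equation*}
\Gamma(z) - P_y(z) = \sup_{y' \in B_R} \bigl(P_{y'}(z) - P_y(z)\bigr).
\end{equation*}
The plan is therefore to first establish the inequality \eqref{eq:convexity} for each single-paraboloid difference $P_{y'} - P_y$, with a slack $\frac{t(1-t)}{2R^2}\ve\xi\ve_g^2$ that is \emph{uniform} in $y'$, then bound $P_{y'}(z_i) \leq \Gamma(z_i)$ on the right, and finally take the supremum over $y' \in B_R$ on the left.

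Fix $y' \in B_R$. The additive constants $c_{y'}, c_y$ cancel in the second-order incremental quotient, and a direct computation gives
\begin{equation*}
(1-t)(P_{y'}-P_y)(z_1) + t(P_{y'}-P_y)(z_2) - (P_{y'}-P_y)(z) = \frac{1}{2R^2}\bigl[\Delta_2 d_y^2 - \Delta_2 d_{y'}^2\bigr],
\end{equation*}
where $\Delta_2 f := (1-t)f(z_1) + tf(z_2) - f(z)$. Applying \Cref{lem:dist_squared} to the distance-squared function centered at $y$ yields $\Delta_2 d_y^2 \geq 0$, while applying it to the distance-squared function centered at $y'$ yields $\Delta_2 d_{y'}^2 \leq t(1-t)\ve\xi\ve_g^2$. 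Subtracting produces the required lower bound $-\frac{t(1-t)}{2R^2}\ve\xi\ve_g^2$ for the single-paraboloid quotient, which is exactly \eqref{eq:convexity} with $\Gamma$ replaced by $P_{y'}$.

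The main obstacle is purely geometric: \Cref{lem:dist_squared} requires $z, z_1, z_2$ to lie in a geodesic ball around its base point of radius less than $\frac{\pi}{2\sqrt{K'}}$, where $K'$ bounds the sectional curvature in that ball, and this must be secured uniformly in $y' \in B_R$, not only for the fixed $y$. I would verify this by combining $y, y' \in B_R(z_0)$, the standing assumption $15R < \mathrm{inj}(z_0) \land \frac{\pi}{\sqrt{K}}$, and the injectivity-radius estimate from \cite{Xu18} already invoked in the proof of \Cref{lem:good_ring}; together these show that $\mathrm{inj}(y')$ is of order $R$ or larger and that $z, z_1, z_2$ sit inside a ball around $y'$ of radius $O(R) \ll \frac{\pi}{2\sqrt{K}}$, contained in $B_{\mathrm{inj}(y')}(y')$. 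Since $K = K_{\max}(B_{\mathrm{inj}(z_0)}(z_0))$ dominates the sectional curvature throughout this region, \Cref{lem:dist_squared} applies uniformly in $y' \in B_R$, and the proof is complete.
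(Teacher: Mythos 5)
Your proof is correct and is essentially the same argument as the paper's. The paper uses the fact that some $P_\ast = P_{y_\ast}$ touches $\Gamma$ from below at $z$, so $(\Gamma - P_\ast)(z) = 0 \leq (1-t)(\Gamma - P_\ast)(z_1) + t(\Gamma - P_\ast)(z_2)$, and then inserts $\pm P_y$ to reduce to bounding $\Delta_2 d_{y_\ast}^2$ from above and $\Delta_2 d_y^2$ from below via \Cref{lem:dist_squared} — exactly the two one-sided applications of the lemma that you perform, just with $y' = y_\ast$ fixed rather than sup'd at the end. Your organization (prove the estimate for each $P_{y'} - P_y$, replace $P_{y'}$ by $\Gamma$ on the right, then take $\sup_{y'}$ on the left) is a harmless repackaging of the same computation. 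One small point in your favor: you explicitly flag that \Cref{lem:dist_squared} has to be applicable at the auxiliary base point ($y'$ or $y_\ast$), not only at $y$, and sketch how the standing radius condition $15R < \mathrm{inj}(z_0) \land \pi/\sqrt{K}$ and the injectivity-radius estimate from \cite{Xu18} secure this; the paper invokes \Cref{lem:dist_squared} at $y_\ast$ without comment.
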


\begin{proof}
By the definition of $\Gamma$, there is a paraboloid $P_\ast := P_{y_\ast}$ with some point $y_\ast \in B_R$, such that $\Gamma(z) = P_\ast(z)$. Then we have
\begin{equation*}
(\Gamma - P_\ast)(z) = 0 \leq (1-t)(\Gamma - P_\ast)(z_1) + t (\Gamma - P_\ast)(z_2),
\end{equation*}
and hence
\begin{align} \label{eq:convexity_ast}
\begin{split}
(\Gamma - P_y)(z) 
\leq&~ (1-t) (\Gamma - P_y)(z_1) + t(\Gamma - P_y)(z_2) \\
&~ - \left( (1-t) P_\ast(z_1) + t P_\ast(z_2) - P_\ast(z) \right) \\
&~ + \left( (1-t) P_y(z_1) + t P_y(z_2) - P_y(z) \right).
\end{split}
\end{align}
Using \Cref{lem:dist_squared}, we obtain
\begin{align} \label{eq:P_ast}
\begin{split}
- \left( (1-t) P_\ast(z_1) + t P_\ast(z_2) - P_\ast(z) \right) 
&= \frac{1}{2R^2} \left( (1-t) d_{y_\ast}^2(z_1) + t d_{y_\ast}^2 (z_2) - d_{y_\ast}^2(z) \right) \\
&\leq \frac{1}{2R^2} t(1-t) \ve \xi \ve_g^2
\end{split}
\end{align}
and
\begin{equation} \label{eq:P_y}
(1-t) P_y(z_1) + t P_y(z_2) - P_y(z) = - \frac{1}{2R^2} \left( (1-t) d_y^2(z_1) + t d_y^2 (z_2) - d_y^2(z) \right) \leq 0.
\end{equation}
Therefore, \eqref{eq:convexity} follows from \eqref{eq:convexity_ast}, \eqref{eq:P_ast}, and \eqref{eq:P_y}.
\end{proof}

By means of \Cref{lem:good_ring} and \Cref{lem:convexity}, we will show that in a small ball near a contact point the envelope is captured by two paraboloids that are quadratically close to each other. Recall that the convex envelope constructed by affine functions in the case of Euclidean spaces \cite{CS09} is captured by two affine planes. The idea in \cite{CS09} is to carry information from the ``good ring" to the ball enclosed by the ring, by using the convexity of the function $\Gamma$. In our setting, we use $-R^{-2}$-convexity of the function $\Gamma - P_y$ instead.

\begin{lemma} \label{lem:geometric}
Assume the same assumptions as in \Cref{lem:key}. Let $x \in A$, $y = \phi(x)$, and let $r = r_k$ be the radius in \Cref{lem:good_ring}. There is a small constant $\varepsilon_0 = \varepsilon_0(n) \in (0,1)$ such that if
\begin{equation} \label{eq:geo_assumption}
\mu_g(\lbrace z \in B_r(x) \setminus B_{r/2}(x): \Gamma(z) > P_y(z) + h \rbrace) \leq \varepsilon_0 \mu_g(B_r(x) \setminus B_{r/2}(x)),
\end{equation}
then
\begin{equation*}
\Gamma(z) \leq P_y(z) + h + \frac{1}{2} \left( \frac{r}{R} \right)^2
\end{equation*}
for all $z \in B_{r/2}(x)$.
\end{lemma}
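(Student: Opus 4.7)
The plan is to transfer the bound from the ring to the inner ball by exploiting the $-R^{-2}$-convexity of $f := \Gamma - P_y$ along geodesics through $z$ (\Cref{lem:convexity}), and to produce a suitable such geodesic via a pigeonhole on directions. Fix $z \in B_{r/2}(x)$. For any unit $v \in T_z M$ and $s_1, s_2 > 0$ such that $z_1 := \exp_z(s_1 v)$ and $z_2 := \exp_z(-s_2 v)$ lie in the normal neighborhood of $z$ (secured by the radius condition on $R$), \Cref{lem:convexity} applied with $\xi = (s_1+s_2)v$ and $t = s_1/(s_1+s_2)$ yields
\[
f(z) \le \frac{s_2 f(z_1) + s_1 f(z_2)}{s_1+s_2} + \frac{s_1 s_2}{2R^2}.
\]
When $z_1, z_2 \in A_r := B_r(x) \setminus B_{r/2}(x)$, one has $s_1+s_2 = d(z_1, z_2) \le \mathrm{diam}(A_r) \le 2r$, so $s_1 s_2 \le r^2$ by AM--GM, whence the last term is at most $\frac{1}{2}(r/R)^2$. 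Thus, if additionally $z_1, z_2 \notin G := \{w \in A_r : f(w) > h\}$, so that $f(z_i) \le h$, the desired bound $f(z) \le h + \frac{1}{2}(r/R)^2$ follows.

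It therefore suffices to exhibit, for each such $z$, a unit $v \in T_z M$ and scalars $s_1, s_2 > 0$ with $z_1, z_2 \in A_r \setminus G$. Under the imposed radius and curvature hypotheses, $B_r(x)$ and $B_{r/2}(x)$ are geodesically convex (using \eqref{eq:Hessian} together with $r < R < \frac{\pi}{2\sqrt{K}}$), so the convex function $s \mapsto d_x^2(\exp_z(sv))$ has interval sub-level sets, and every geodesic through $z$ meets $A_r$ in exactly two arcs, one for each sign of $s$; denote their parameter intervals by $[a^+(v), b^+(v)]$ and $[-b^-(v), -a^-(v)]$. Call $v$ \emph{bad in direction $\pm$} if the corresponding arc is entirely contained in $G$; a direction good in both $\pm$ immediately furnishes the required pair $(z_1, z_2)$.

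The main obstacle is the measure-theoretic control of bad directions. In geodesic polar coordinates $(v, s)$ around $z$, the Jacobian $J_z(v, s)$ of $\exp_z$ is comparable to $s^{n-1}$ up to multiplicative constants depending only on $n$, because the sectional curvatures lie between $0$ and $K$ on the relevant region, with $2R < \pi/\sqrt{K}$ keeping the Rauch/Bishop--Gromov comparison factors bounded. A direct computation in these coordinates then gives $\int_{a^+(v)}^{b^+(v)} J_z(v,s)\,ds \ge c(n) r^n$ uniformly in $v$ (and analogously on the $-$ side), so Fubini produces
\[
\mu_g(G) \;\ge\; c(n)\, r^n \, |V_{\mathrm{bad},+}|,
\]
with the symmetric inequality for $V_{\mathrm{bad},-}$. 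Combined with $\mu_g(G) \le \varepsilon_0 \mu_g(A_r) \le C(n) \varepsilon_0 r^n$, this forces $|V_{\mathrm{bad},+}| + |V_{\mathrm{bad},-}| \le C(n) \varepsilon_0$. Choosing $\varepsilon_0 = \varepsilon_0(n)$ sufficiently small makes $|V_{\mathrm{bad},+} \cup V_{\mathrm{bad},-}| < |\mathbb{S}^{n-1}|$, yielding a direction good on both sides and completing the proof via the reduction in the first paragraph. The subtle point is the uniform lower bound on $\int_{a^+}^{b^+} J_z \, ds$, which requires verifying that even when $z$ lies near $\partial B_{r/2}(x)$ (so that the inner endpoint $a^+(v)$ can be small) the outer endpoint $b^+(v)$ stays comparable to $r$; this is what the geodesic convexity of the balls, together with the curvature-controlled volume estimates, guarantees.
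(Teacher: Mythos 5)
Your proof is correct and takes a genuinely different route from the paper's. Both arguments reduce, via the $-R^{-2}$-convexity of $\Gamma - P_y$ (\Cref{lem:convexity}), to exhibiting two points $w_1, w_2$ of the ring $B_r(x)\setminus B_{r/2}(x)$, collinear with $z$ on a geodesic, with $(\Gamma - P_y)(w_i) \le h$. The paper finds them by fixing the segment through $x$ and $z$, locating a small ball $B_{r/8}(z_1)$ in which at least half the measure is good, and pushing it to the other side with a contraction--reflection map $F = \exp_z\circ T\circ\exp_z^{-1}$ (with $T(\xi) = -\tfrac{3r/4-d}{3r/4+d}\xi$); the Jacobian of $F$ is controlled by the curvature bounds, and the assumption that $F$ maps the good half-ball entirely into the bad set yields a contradiction. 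You instead run a Fubini/pigeonhole argument over all unit directions at $z$: in geodesic polar coordinates the Jacobian is $\asymp s^{n-1}$ with dimensional constants (by the two-sided curvature bound and the radius restriction), each arc of $A_r$ along a geodesic from $z$ has length $\ge r/2$ with outer endpoint $\ge r/2$, so each ``bad'' direction forces a contribution $\ge c(n)r^n$ to $\mu_g(G)$; since $\mu_g(G) \le C(n)\varepsilon_0 r^n$, both bad-direction sets have small spherical measure and a doubly-good direction exists. This buys a more transparent argument that dispenses with the auxiliary reflection map and its volume-change estimate, at the cost of a slightly more delicate Fubini setup. Two details you wave at that the paper handles explicitly and that you would need to spell out: the injectivity radius bound $\mathrm{inj}(z) > 4r$ (so that polar coordinates around $z$ are valid on the whole ring and the geodesics from $z$ are minimizing there), and the quantitative lower bound $\int_{a^\pm(v)}^{b^\pm(v)} s^{n-1}\,\d s \ge n^{-1}(r/2)^n$ coming from $b^\pm(v) - a^\pm(v) \ge r/2$ and $b^\pm(v) \ge r/2$.
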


\begin{proof}
Let us fix $z \in B_{r/2}(x)$ and claim that there are two points $w_1, w_2 \in B_r(x) \setminus B_{r/2}(x)$ such that three points $w_1$, $z$, and $w_2$ are joined by a geodesic, and that
\begin{equation} \label{eq:w_i}
\Gamma(w_i) \leq P_y(w_i) + h, \quad i = 1, 2.
\end{equation}
Once we find such points, we may write $w_1 = \exp_z(t \xi)$ and $w_2 = \exp_z((1-t)(-\xi))$ for some $\xi \in T_z M$ with $\ve \xi \ve_g$ being the length of the line segment between $w_1$ and $w_2$, and $t \in (0, 1)$. Then, we have $\ve \xi \ve_g \leq 2r$ and $t(1-t) \leq 1/4$. Thus, by \Cref{lem:convexity} and \eqref{eq:w_i}, we obtain that
\begin{equation*}
(\Gamma - P_y)(z) \leq h + \frac{1}{2R^2} t(1-t) \ve \xi \ve_g^2 \leq h + \frac{1}{2} \left( \frac{r}{R} \right)^2,
\end{equation*}
finishing the proof.

To prove the claim, we first extend the line segment between $x$ and $z$ in both directions to find two points $z_1$ and $z_2$ on $\partial B_{3r/4}(x)$. We call the farther one from $z$ as $z_1$ and the closer one from $z$ as $z_2$. Let $D = \lbrace z: \Gamma(z) \leq P_y(z) + h \rbrace$, then it follows from \eqref{eq:geo_assumption} and \eqref{eq:VD} that
\begin{align*}
\mu_g(B_{r/8}(z_1) \cap D^c) 
&\leq \mu_g((B_r(x) \setminus B_{r/2}(x)) \cap D^c) \\
&\leq \varepsilon_0 \mu_g(B_r(x) \setminus B_{r/2}(x)) \\
&\leq \varepsilon_0 \mu_g(B_{2r}(z_1)) \leq \varepsilon_0 16^n \mu_g(B_{r/8}(z_1)).
\end{align*}
Assuming $\varepsilon_0 < 2^{-4n-1}$, we obtain
\begin{equation} \label{eq:half}
\mu_g(D \cap B_{r/8}(z_1)) \geq \frac{1}{2} \mu_g(B_{r/8}(z_1)).
\end{equation}
Let us write $d = d(z, x) \in [0, r/2)$ and define a map $F = \exp_z \circ T \circ \exp_z^{-1} : B_{\mathrm{inj}(z)}(z) \to M$, where $T: T_z M \to T_z M$ is a linear map given by
\begin{equation*}
T(\xi) = - \frac{\frac{3r}{4} - d}{\frac{3r}{4} + d} \, \xi.
\end{equation*}
By \cite{Xu18}, we know that $\mathrm{inj}(z) \geq \left( \mathrm{inj}(z_0) \land \mathrm{conj} (z) \right) - d(z, z_0)$. It follows from $\mathrm{inj}(z_0) > 15R$, $d(z, z_0) \leq d(z, x) + d(x, z_0) < r/2 + 5R$, and
\begin{equation*}
\mathrm{conj}(z) \geq \frac{\pi}{\sqrt{K}} \land \left( \mathrm{inj}(z_0) - d(z, z_0) \right) \geq 10R - r/2,
\end{equation*}
that $\mathrm{inj}(z) \geq 5R - r$. Let us recall that we have $r = r_k = \rho_0 \rho_1^{-1/(2-\sigma)-k} R < R$ from the choice \eqref{eq:rho}. Thus, we obtain $\mathrm{inj}(z) > 4r$, and hence $F$ is well-defined in $B_{r/8}(z_1) \subset B_{\mathrm{inj}(z)}(z)$. 

What we only need to show is that $(B_r(x) \setminus B_{r/2}(x)) \cap D \cap F(D \cap B_{r/8}(z_1))$ is not empty. Let us assume to the contrary that
\begin{equation} \label{eq:contradiction}
F(D \cap B_{r/8}(z_1)) \subset (B_r(x) \setminus B_{r/2}(x)) \cap D^c
\end{equation}
and find a contradiction by estimating the volume change by $F$. Let $D_1 = \exp_z^{-1} (D \cap B_{r/8}(z_1))$, $D_2 = T(D_1)$, and $D_3 = \exp_z(D_2)$. Due to the lower bound of curvature, we estimate
\begin{equation} \label{eq:D_1}
\mu_g(D \cap B_{r/8}(z_1)) = \iint_{D_1} \det (D \exp_z)(tv) t^{n-1} \, \d v \, \d t \leq \iint_{D_1} t^{n-1} \, \d v \, \d t = \ve D_1 \ve.
\end{equation}
by means of the polar coordinates (see, for instance, \cite{Jos17}). Since $d \in [0, r/2)$, we have
\begin{equation} \label{eq:D_2}
|D_2| = \left( \frac{\frac{3r}{4}-d}{\frac{3r}{4}+d} \right) ^n |D_1| \geq 5^{-n} \ve D_1 \ve.
\end{equation}
Moreover, since the curvature in $D_3$ is bounded from above by $K_{\mathrm{max}}(D_3)$, which is less than or equal to $K = K_{\mathrm{max}}(B_{\mathrm{inj}(z_0)}(z_0))$, we obtain
\begin{equation*}
\mu_g(D_3) = \iint_{D_2} \det(D \exp_z)(tv) t^{n-1} \, \d v \, \d t \geq \iint_{D_2} \left( \frac{\sin ( \sqrt{K} t )}{\sqrt{K} t} \right)^{n-1} t^{n-1} \, \d v \, \d t.
\end{equation*}
Note that the function $\sin(\sqrt{K} t)/(\sqrt{K}t)$ is nonnegative and decreasing in $(0, \frac{\pi}{\sqrt{K}}]$.
If $(t, v) \in D_2$, then
\begin{equation*}
t \leq \frac{\frac{3r}{4} - d}{\frac{3r}{4} + d} \left( \frac{7r}{8} + d \right) \leq \frac{3r}{2} < \frac{3R}{2} < \frac{\pi}{10 \sqrt{K}},
\end{equation*}
and hence
\begin{equation} \label{eq:D_3}
\mu_g(D_3) \geq \left( \frac{\sin \frac{\pi}{10}}{\frac{\pi}{10}}\right)^{n-1} \ve D_2 \ve.
\end{equation}
Combining \eqref{eq:half} and \eqref{eq:D_1}–\eqref{eq:D_3}, we have $\mu_g(B_{r/8}(z_1)) \leq C(n) \mu_g(D_3)$. Moreover, by using \eqref{eq:contradiction}, \eqref{eq:geo_assumption}, and \eqref{eq:VD}, we obtain
\begin{align*}
\mu_g(B_{r/8}(z_1)) 
&\leq C \mu_g((B_r(x) \setminus B_{r/2}(x)) \cap D^c) \\
&\leq C \varepsilon_0 \mu_g(B_r(x) \setminus B_{r/2}(x)) \\
&\leq C \varepsilon_0 \mu_g(B_{2r}(z_1)) \leq 16^n C \varepsilon_0 \mu_g(B_{r/8}(z_1)).
\end{align*}
Therefore, we arrive at a contradiction by taking $\varepsilon_0 < 16^{-n} C^{-1}$.
\end{proof}

The flatness of $\Gamma$ in a small region, obtained in \Cref{lem:geometric}, allows us to control the gradient of $\Gamma$ in a smaller region, where the gradient of $\Gamma$ is understood as the gradient of touching paraboloid. This is done by estimating the image of the map $\phi$.

\begin{lemma} \label{lem:gradient_map}
Assume the same assumptions as in \Cref{lem:good_ring}, and let $\varepsilon_0$ be the constant in \Cref{lem:geometric}. For any $x \in A$ there is an $r = r_k \leq r_0$ such that
\begin{equation} \label{eq:ring_eps}
\mu_g \left( \left\lbrace z \in R_k: u(z) > P_y(z) + C\left( \Lambda + R^\sigma f(x) \right)_+ (r_k/R)^2 \right\rbrace \right) \leq \varepsilon_0 \mu_g(R_k)
\end{equation}
and
\begin{equation} \label{eq:gradient_map}
\mu_g \left(\phi \left(\overline{B_{r/4}(x)} \right) \right) \leq C \left( \Lambda + R^\sigma f(x) \right)_+^n \mu_g(B_{r/4}(x)),
\end{equation}
where $C > 0$ is a universal constant depending only on $n$, $\lambda$, $a_1$, $a_2$, and $\sigma_0$.
\end{lemma}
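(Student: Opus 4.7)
The plan is to pick $r = r_k$ from \Cref{lem:good_ring} tuned to \Cref{lem:geometric}, derive flatness of $\Gamma - P_y$ on $B_{r/2}(x)$, upgrade that flatness to a gradient bound on $B_{r/4}(x)$ through the $-R^{-2}$-semiconvexity of \Cref{lem:convexity}, and finally convert this into a volume bound for $\phi(\overline{B_{r/4}(x)})$ via Jacobi field comparison under nonnegative curvature. Concretely, I would apply \Cref{lem:good_ring} with $M_0 = C_0(\Lambda + R^\sigma f(x))_+/\varepsilon_0$, where $C_0$ is the constant furnished by that lemma and $\varepsilon_0$ is the one from \Cref{lem:geometric}; this produces an index $k \ge 0$, hence $r = r_k \le r_0$, satisfying $\mu_g(G_k) \le \varepsilon_0 \mu_g(R_k)$, which is \eqref{eq:ring_eps} after relabeling constants. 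Because every paraboloid in the supremum defining $\Gamma$ lies below $u$ in $\overline{B_{5R}(z_0)}$, we have $\Gamma \le u$ there, so the set $\{z \in R_k : \Gamma(z) > P_y(z) + h\}$ with $h = M_0 (r/R)^2$ is contained in $G_k$, and \Cref{lem:geometric} then gives
\[
 0 \le (\Gamma - P_y)(z) \le h + \tfrac{1}{2}(r/R)^2 \le C(\Lambda + R^\sigma f(x))_+ (r/R)^2 \quad \text{on } B_{r/2}(x),
\]
where $\Gamma \ge P_y$ holds by definition of the envelope and the $\tfrac{1}{2}(r/R)^2$ term is absorbed into the $\Lambda$ factor.

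The main obstacle is upgrading this oscillation bound to a pointwise gradient bound on $B_{r/4}(x)$. Affine functions are unavailable on $M$, so the Euclidean argument of \cite{CS09} cannot be used verbatim. Instead, \Cref{lem:convexity} furnishes the key substitute: $\Gamma - P_y$ is semiconvex of modulus $R^{-2}$ along every geodesic whose image lies in $B_{r/2}(x)$. Combining this with $(\Gamma - P_y)(x) = 0$ and the upper bound from the previous step, I would run a one-dimensional secant comparison along every geodesic of length $r/4$ emanating from an arbitrary differentiability point $x' \in B_{r/4}(x)$---such a geodesic stays in $B_{r/2}(x)$ by the triangle inequality---to conclude
\[
 \ve \nabla (\Gamma - P_y)(x') \ve_g \le C(\Lambda + R^\sigma f(x))_+ \, r / R^2.
\]
The delicate point is that the ``secant'' is a geodesic rather than a straight line, so the standard Euclidean ``semiconvexity plus oscillation implies Lipschitz'' step must be rigorously rephrased on $M$ using Jacobi fields, exactly as flagged in the introduction.

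For the final step, I would translate the gradient bound into the volume estimate \eqref{eq:gradient_map}. At every differentiability point $x' \in B_{r/4}(x)$, both $y = \exp_{x'}(\nabla R^2 P_y(x'))$ and $\phi(x') = \exp_{x'}(\nabla R^2 \Gamma(x'))$ are images under $\exp_{x'}$ of tangent vectors whose difference has norm at most $C r (\Lambda + R^\sigma f(x))_+$. Under $\mathrm{Sect} \ge 0$, Rauch comparison yields that $\exp_{x'}$ is $1$-Lipschitz on its injectivity domain, so $d(y, \phi(x')) \le C r (\Lambda + R^\sigma f(x))_+$ and hence $\phi(\overline{B_{r/4}(x)}) \subset B(y, C r (\Lambda + R^\sigma f(x))_+)$. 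The volume doubling \eqref{eq:VD} reduces this ball to one of radius $r/4$ up to a factor $C(\Lambda + R^\sigma f(x))_+^n$, and the comparability \eqref{eq:comparability} swaps the center from $y$ to $x$ since both injectivity radii exceed $r/4$, yielding
\[
 \mu_g(\phi(\overline{B_{r/4}(x)})) \le C(\Lambda + R^\sigma f(x))_+^n \, \mu_g(B_{r/4}(x)),
\]
as required.
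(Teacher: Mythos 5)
Your proposal is correct and follows essentially the same route as the paper: pick $r=r_k$ via \Cref{lem:good_ring} with the same choice of $M_0$, pass to flatness of $\Gamma-P_y$ on $B_{r/2}(x)$ via $\Gamma\le u$ and \Cref{lem:geometric}, upgrade to a gradient bound on $B_{r/4}(x)$ using the $-R^{-2}$-semiconvexity from \Cref{lem:convexity}, show $d(\phi(z),\phi(x))\lesssim r(\Lambda+R^\sigma f(x))_+$ via the nonnegative-curvature Jacobi-field comparison (1-Lipschitz property of $\exp_z$), and finish with \eqref{eq:VD} and \eqref{eq:comparability}. The only cosmetic difference is that the paper argues the gradient bound by contradiction with a telescoping of the semiconvexity inequality over a partition of the geodesic, valid for every $y_\ast\in\phi(z)$ without restricting to differentiability points, whereas your ``secant comparison'' is phrased at differentiability points of $\Gamma$; this is easily patched (the same one-dimensional estimate applies to $P_{y_\ast}-P_y$ for any touching paraboloid), so the two arguments are interchangeable in substance.
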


\begin{proof}
Let $x \in A$ and $y = \phi(x)$. By applying \Cref{lem:good_ring} to $u$ with $M_0 = \frac{C_0}{\varepsilon_0} \left( \Lambda + R^\sigma f(x) \right)_+$, we find $r = r_k \leq r_0$ such that \eqref{eq:ring_eps} holds. Moreover, since $\Gamma \leq u$, we have
\begin{equation*}
\mu_g \left( \left\lbrace z \in R_k : \Gamma (z) > P_y(z) + C \left( \Lambda + R^\sigma f(x) \right)_+ (r_k/R)^2 \right\rbrace \right) \leq \varepsilon_0 \mu_g(R_k).
\end{equation*}
Thus, \Cref{lem:geometric} shows that
\begin{equation} \label{eq:flatness}
P_y(z) \leq \Gamma(z) \leq P_y(z) + C \left( \Lambda + R^\sigma f(x) \right)_+ \left( \frac{r}{R} \right)^2
\end{equation}
for all $z \in B_{r/2}(x)$.

We first claim that there is a constant $C_1 > 0$ such that
\begin{equation} \label{eq:grad}
\ve \nabla P_{y_\ast} (z) - \nabla P_y (z) \ve_{g(z)} \leq \frac{C_1}{R^2} \left( \Lambda + R^\sigma f(x) \right)_+ r
\end{equation}
for all $z \in B_{r/4}(x)$ and $y_\ast \in \phi(z)$. It is enough to show that
\begin{equation} \label{eq:directional_grad}
\left\ve \left. \frac{\d}{\d t} \right\ve_{t=0} \left( P_{y_\ast} - P_y \right)(\gamma(t)) \right\ve \leq \frac{C_1}{R^2} \left( \Lambda + R^\sigma f(x) \right)_+ r
\end{equation}
for all geodesics $\gamma$, with unit speed, starting from $\gamma(0) = z$. Suppose that there is a geodesic $\gamma$ such that \eqref{eq:directional_grad} does not hold. We may assume that
\begin{equation*}
\frac{C_1}{R^2} \left( \Lambda + R^\sigma f(x) \right)_+ r \leq \left. \frac{\d}{\d t} \right\ve_{t=0} \left( P_{y_\ast} - P_y \right)(\gamma(t)),
\end{equation*}
by considering $\tilde{\gamma}(t) = \gamma(-t)$ instead of $\gamma(t)$ if necessary. Let $\varepsilon > 0$, then there is $\delta > 0$ such that if $\ve t \ve < \delta$, we have
\begin{equation} \label{eq:t}
\frac{C_1}{R^2} \left( \Lambda + R^\sigma f(x) \right)_+ r - \varepsilon \leq \frac{(P_{y_\ast} - P_y)(\gamma(t)) - (P_{y_\ast} - P_y)(\gamma(0))}{t} \leq \frac{h(t) - h(0)}{t},
\end{equation}
where $h(t) = (\Gamma - P_y)(\gamma(t))$. Let $T > 0$ be the first time when $\gamma$ reaches the boundary of $B_{3r/8}(x)$, namely, $\gamma(T) \in \partial B_{3r/8}(x)$. Let $N$ be the least integer not smaller than $T/\delta$, and let $0 = t_0 < t_1 < \cdots < t_N = T$ be equally distributed times. Then we have $t_{i+1} - t_i = T/N \leq \delta$. We observe that \Cref{lem:convexity} shows
\begin{equation} \label{eq:t_i}
\frac{h(t_i) - h(t_{i-1})}{t_i - t_{i-1}} \leq \frac{h(t_{i+1}) - h(t_i)}{t_{i+1} - t_i} + \frac{1}{2R^2} (t_{i+1} - t_{i-1}), \quad i = 1, 2, \cdots, N-1.
\end{equation}
Thus, it follows from \eqref{eq:t} and \eqref{eq:t_i} that
\begin{align*}
\frac{C_1}{R^2} \left( \Lambda + R^\sigma f(x) \right)_+ r - \varepsilon 
&\leq \frac{h(t_1) - h(t_0)}{T/N} \leq \frac{h(t_2) - h(t_1)}{T/N} + \frac{1}{2R^2} \frac{2T}{N} \\
&\leq \cdots \leq \frac{h(t_N) - h(t_{N-1})}{T/N} + \frac{1}{2R^2} \frac{2T}{N} (N-1).
\end{align*}
Therefore, we obtain that
\begin{equation*}
N \left( \frac{C_1}{R^2} \left( \Lambda + R^\sigma f(x) \right)_+ r - \varepsilon \right) \leq \frac{h(t_N) - h(t_0)}{T/N} + \frac{1}{2R^2} \frac{2T}{N} \frac{N(N-1)}{2}.
\end{equation*}
Since $\gamma$ has a unit speed, we have $r/8 < T <r$, and hence
\begin{equation*}
\frac{C_1}{R^2} \left( \Lambda + R^\sigma f(x) \right)_+ r - \varepsilon \leq \frac{(\Gamma - P_y)(\gamma(T)) - (\Gamma - P_y)(z)}{r/8} + \frac{r}{2R^2} \leq \frac{(\Gamma - P_y)(\gamma(T))}{r/8} + \frac{r}{2R^2}.
\end{equation*}
Recalling that $\varepsilon$ was arbitrary, we have
\begin{equation} \label{eq:not_flat}
\frac{C_1}{8} \left( \Lambda + R^\sigma f(x) \right)_+ \left( \frac{r}{R} \right)^2 - \frac{r^2}{16R^2} \leq (\Gamma - P_y) (\gamma(T)).
\end{equation}
Since $\gamma(T) \in \partial B_{3r/8}(x) \subset B_{r/2}(x)$, the inequality \eqref{eq:not_flat} with sufficiently large constant $C_1 > 0$ contradicts to \eqref{eq:flatness}. Therefore, we have proved the claim \eqref{eq:grad}.

Let us next prove \eqref{eq:gradient_map} using \eqref{eq:grad}. It is enough to show that
\begin{equation} \label{eq:image_phi}
\phi \left( \overline{B(x, r/4)} \right) \subset B \Big( \phi(x), C(\Lambda + R^\sigma f(x))_+ r \Big).
\end{equation}
Indeed, once \eqref{eq:image_phi} is proved, then by \eqref{eq:comparability} and \eqref{eq:VD} (or \eqref{eq:RVD}) we have
\begin{equation*}
\mu_g \left( \phi \left( \overline{B(x, r/4)} \right) \right) \leq \mu_g \Big( B( \phi(x), C(\Lambda + R^\sigma f(x))_+r ) \Big) \leq C \left( \Lambda + R^\sigma f(x) \right)_+^n \mu_g(B(x, r/4)).
\end{equation*}
To verify \eqref{eq:image_phi}, let us fix $z \in \overline{B(x, r/4)}$ and $y_\ast \in \phi(z)$. Then we know from \eqref{eq:grad} that
\begin{equation*}
\left\ve \exp_z^{-1} y_\ast - \exp_z^{-1} y \right\ve_{g(z)} = R^2 \ve \nabla P_{y_\ast} (z) - \nabla P_y (z) \ve_{g(z)} \leq C_1 \left( \Lambda + R^\sigma f(x) \right)_+ r.
\end{equation*}
Thus, it only remains to show that
\begin{equation} \label{eq:dist_vertex}
d(y_\ast, y) \leq \left\ve \exp_z^{-1} y_\ast - \exp_z^{-1} y \right\ve_{g(z)}.
\end{equation}
Let $\xi_1 = \exp_z^{-1} y_\ast$ and $\xi_2 = \exp_z^{-1} y$. Let us consider a family of geodesics
\begin{equation*}
\gamma(s, t) = \exp_z (t(\xi_1 + s(\xi_2 - \xi_1))),
\end{equation*}
and the Jacobi field $J$ along $\gamma$. Then, by \cite[Equation (1.9)]{Gig12} (or see, e.g. \cite{Jos17}), we have
\begin{equation*}
\ve J(1) \ve_{g(y_\ast)} \leq \ve J'(0) \ve_{g(z)} = \ve \xi_2 - \xi_1 \ve_{g(z)}.
\end{equation*}
Therefore, \eqref{eq:dist_vertex} follows by considering the curve $s \mapsto \gamma(s, 1)$ and observing that
\begin{equation*}
d(y_\ast, y) \leq \int_0^1 \ve \gamma'(s, 1) \ve_{g(y_\ast)} \, \d s \leq \int_0^1 \ve \xi_2 - \xi_1 \ve_{g(z)} \, \d s = \ve \xi_2 - \xi_1 \ve_{g(z)}.
\end{equation*}
We have proved \eqref{eq:dist_vertex}, from which we deduce \eqref{eq:image_phi}.
\end{proof}

We are now ready to prove \Cref{lem:ABP_discrete} by using the previous lemmas and the dyadic cubes in \Cref{thm:dyadic_cubes}.

\begin{proof} [Proof of \Cref{lem:ABP_discrete}]
In order to construct such a family, we are going to use \Cref{thm:dyadic_cubes}. Let us first fix the smallest integer $N \in \mathbb{Z}$ such that $c_2 \delta_0^N \leq r_0$. Then there are finitely many dyadic cubes $Q_\alpha^N$ of generation $N$ such that $\overline{Q}_\alpha^N \cap A \neq \emptyset$ and $A \subset \cup_\alpha \overline{Q}_\alpha^N$. Whenever a dyadic cube $Q_\alpha^N$ does not satisfy (iii) and (iv), we consider its successors $Q_\beta^{N+1} \subset Q_\alpha^N$ instead of $Q_\alpha^N$. Among these successors of $N+1$ generation, we only keep those whose closures intersect $A$, and discard the rest. We iterate the process in the same way. The only part we need to prove is that the process finishes in a finite number of steps.

Assume that the process produces an infinite sequence of nested dyadic cubes $\lbrace Q_\alpha^j \rbrace_{j=N}^\infty$ with $\alpha = \alpha_j \in I_j$. Then the intersection of their closures is some point $x_0$ which is contained in the contact set $A$. By \Cref{lem:gradient_map}, we there is an $r = r_k \leq r_0$ such that \eqref{eq:ring_eps} and \eqref{eq:gradient_map} hold. The condition $\rho_1 \geq 1/\delta_0$ in \eqref{eq:rho} allows us to find $j \geq N$ satisfying $\rho_1 r/4 \leq c_2 \delta_0^j < r/4$. Then it follows from \Cref{thm:dyadic_cubes} (iv) that
\begin{equation} \label{eq:Q_in_B}
\overline{Q}_\alpha^j \subset B_{r/4}(x_0).
\end{equation}
Moreover, by \Cref{thm:dyadic_cubes} (v), $Q_\alpha^j$ contains some ball $B(z_\alpha^j, c_1 \delta_0^j)$. If $z \in B_r(x_0)$, then $d(z, z_\alpha^j) \leq d(z, x_0) + d(x_0, z_\alpha^j) < (1+4/\rho_1)c_2 \delta_0^j$, which shows that
\begin{equation} \label{eq:B_in_Q}
B_r(x_0) \subset B(z_\alpha^j, (1+4/\rho_1)c_2 \delta_0^j).
\end{equation}
Therefore, it follows from \eqref{eq:gradient_map}, \eqref{eq:Q_in_B}, \eqref{eq:comparability}, and the volume doubling property \eqref{eq:VD} that
\begin{align*}
\mu_g ( \phi ( \overline{Q}_\alpha^j )) \leq \mu_g(\phi(\overline{B_{r/4}(x_0)})) 
&\leq C \left( \Lambda + R^\sigma f(x_0) \right)_+^n \mu_g(B_{r/4}(x_0)) \\
&\leq C \bigg( \Lambda + R^\sigma \max_{\overline{Q}_\alpha^j} f \bigg)_+^n \mu_g(Q_\alpha^j).
\end{align*}
Furthermore, \eqref{eq:ring_eps}, \eqref{eq:comparability}, \eqref{eq:VD}, and \eqref{eq:RVD} show that
\begin{align*}
&\mu_g \bigg( B(z_\alpha^j, (1+4/\rho_1)c_2 \delta_0^j) \cap \bigg\lbrace u \leq \Gamma + CR^{-2} \bigg( \Lambda + R^\sigma \max_{\overline{Q}_\alpha^j} f \bigg)_+ d_{j, \alpha}^2 \bigg\rbrace \bigg) \\
&\geq \mu_g \bigg( B_r(x_0) \cap \bigg\lbrace u \leq P_y + C \bigg( \Lambda + R^\sigma \max_{\overline{Q}_\alpha^j} f \bigg)_+ \left( \frac{r}{R} \right)^2 \bigg\rbrace \bigg) \\
&\geq (1-\varepsilon) \mu_g(R_k(x_0)) \geq \gamma \mu_g(Q_\alpha^j)
\end{align*}
for some universal constant $\gamma > 0$. We have shown that $Q_\alpha^j$ satisfy (iii) and (iv), which yields a contradiction. Therefore, the process must stop in a finite number of steps.
\end{proof}

\section{A barrier function} \label{sec:barrier}

In this section we construct a barrier function, which is one of the key ingredients for the Krylov–Safonov Harnack inequality. We use the function of the form $(d^2_{z_0}(\cdot))^{-\alpha}$ with $\alpha > 0$ large, which have been used as a barrier function both for nonlocal operators in the Euclidean spaces \cite{CS09} and local operators on Riemannian manifolds \cite{Cab97}. In \cite{Cab97}, the Hessian bound of distance squared function at one point is enough to evaluate the operator's value. However, for nonlocal operators on Riemannian manifolds, the curvatures near the given point have to be taken into account to evaluate the operator. In order to make the universal constants independent of the curvatures, we need to look at a small region.

\begin{lemma} \label{lem:barrier1}
For $z_0 \in M$, let $K$ be the supremum of the sectional curvatures in $B_{\mathrm{inj}(z_0)}(z_0)$ and let $15R < \mathrm{inj}(z_0) \land \frac{\pi}{\sqrt{K}}$. There are universal constants $\alpha > 0$ and $\sigma_0 \in (0,2)$, depending only on $n$, $\lambda$, and $\Lambda$, such that the function
\begin{equation*}
v(x) = \max \bigg\lbrace - \left( \frac{\rho_0}{20} \right)^{-2\alpha}, - \left( \frac{d_{z_0}(x)}{5R} \right)^{-2\alpha} \bigg\rbrace
\end{equation*}
is a supersolution to
\begin{equation*}
R^\sigma \mathcal{M}^+_{\mathcal{L}_0}v(x) + \Lambda \leq 0,
\end{equation*}
for every $\sigma_0 \leq \sigma < 2$ and $x \in B_{5R}(z_0) \setminus \overline{B}_{\rho_0 R}(z_0)$.
\end{lemma}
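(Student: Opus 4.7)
The plan is to bound $Lv(x)\leq -\Lambda R^{-\sigma}$ for every $L\in\mathcal{L}_0$ and every $x$ in the annulus $B_{5R}(z_0)\setminus\overline{B}_{\rho_0R}(z_0)$, then take the supremum over $L$ to conclude the claim. Since the truncation kink of $v$ sits at $d_{z_0}=\rho_0R/4<\rho_0R<d_{z_0}(x)$, the function $v$ coincides with the smooth function $-(d_{z_0}(\cdot)/5R)^{-2\alpha}$ on a ball $B_r(x)$ whose radius $r$ I will choose as a small universal fraction of $\rho_0R$; the hypothesis $15R<\mathrm{inj}(z_0)\wedge\pi/\sqrt{K}$ then ensures that $B_r(x)$ lies in the region where \eqref{eq:Hessian} applies and that \Cref{lem:integrability} and the argument of \Cref{lem:well-definedness} are valid at $x$.

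I would begin by computing $D^2v(x)$ via the chain rule applied to $v=-\phi\circ d_{z_0}$ with $\phi(t)=(t/5R)^{-2\alpha}$. Writing $\beta=2\alpha$, the radial eigenvalue of $D^2v(x)$ equals $-\beta(\beta+1)|v(x)|/d_{z_0}(x)^2$, while for a unit vector $\eta\perp\nabla d_{z_0}$ the identity $D^2d_{z_0}=(D^2(d_{z_0}^2/2)-\mathrm{d}d_{z_0}\otimes\mathrm{d}d_{z_0})/d_{z_0}$ combined with \eqref{eq:Hessian} yields $D^2v(x)(\eta,\eta)\in[0,\beta|v(x)|/d_{z_0}(x)^2]$. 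Consequently $D^2v(x)$ has one strongly concave radial eigenvalue of magnitude $\beta^2|v(x)|/d_{z_0}(x)^2$ and at most $(n-1)$ mildly convex tangential eigenvalues of order $\beta|v(x)|/d_{z_0}(x)^2$.

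Next, for a fixed $L\in\mathcal{L}_0$ I decompose $Lv(x)=I_1+I_2+I_3$ as in \eqref{eq:Lu3} with ball radius $r$. The Taylor expansion $\delta(v,x,z)=\tfrac12 D^2v(x)(\exp_x^{-1}z,\exp_x^{-1}z)+O(\Ve v\Ve_{C^3(B_r(x))}d_x(z)^3)$, \Cref{lem:integrability}, and the observation that the supremum over admissible kernels is attained by setting $\nu_x$ at its upper bound where the Hessian form is positive and at its lower bound where it is negative, together give
\begin{equation*}
\sup_{L\in\mathcal{L}_0}I_1 \leq -C(n)\bigl(\lambda(\beta+1)-(n-1)\Lambda\bigr)\beta\,|v(x)|\,d_{z_0}(x)^{-2}\,r^{2-\sigma}+C\Ve v\Ve_{C^3(B_r(x))}r^{3-\sigma}.
\end{equation*}
The antisymmetric piece $I_2$ and the tail $I_3$ are estimated as in the proof of \Cref{lem:well-definedness}; the tail I would further refine by exploiting that $|v|$ attains its $L^\infty$-norm only in $B_{\rho_0R/4}(z_0)$, at distance $\gtrsim d_{z_0}(x)$ from $x$, so its near-core contribution inherits an extra geometric decay.

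Finally, I choose $\alpha$ large enough that $\lambda(2\alpha+1)>2(n-1)\Lambda$, making the principal coefficient of $I_1$ strictly negative of magnitude $c\lambda\alpha^2|v(x)|/d_{z_0}(x)^2$, and possibly $\alpha$ larger to absorb the lower-order positive contributions. Using $d_{z_0}(x)\leq 5R$ and $|v(x)|\geq 1$, together with the restriction $\sigma\geq\sigma_0$ (which keeps $(2-\sigma)/(1-2^{-\sigma})$ in \Cref{lem:integrability} uniformly bounded), one arrives at $R^\sigma Lv(x)+\Lambda\leq 0$. The main obstacle is precisely this calibration: the $(n-1)$ nonnegative tangential eigenvalues enter $\sup_L I_1$ with weight $\Lambda$ while only the single strongly negative radial one enters with weight $\lambda$, so the $O(\alpha^2)$ radial growth must beat the $O((n-1)\alpha)$ tangential sum, which is exactly why $\alpha$ must be chosen depending on $n$, $\lambda$, $\Lambda$. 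A secondary but essential technicality is that the antisymmetric term $I_2$, a priori of size $\Lambda\Ve v\Ve_{L^\infty}R^{-\sigma}$ with $\Ve v\Ve_{L^\infty}=(\rho_0/20)^{-2\alpha}$, must be controlled using the quadratic-in-$K$ estimate from the proof of \Cref{lem:well-definedness}, which is what keeps the final constants independent of the curvature upper bound.
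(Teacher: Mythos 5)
Your overall structure mirrors the paper's: decompose $Lv(x)$ into a near-field symmetric part, an antisymmetric part, and a tail, bound the symmetric part using the concavity of $v$ along the radial direction, and take $\alpha$ large so the single radial eigenvalue (weighted by $\lambda$) beats the $(n-1)$ tangential ones (weighted by $\Lambda$). But there is a genuine gap in how you close the estimate. You treat $\sigma_0$ as a pre-given parameter whose only role is to keep the factor $(2-\sigma)/(1-2^{-\sigma})$ in \Cref{lem:integrability} uniformly bounded, and you claim one can take ``$\alpha$ larger to absorb the lower-order positive contributions.'' That cannot work. The positive higher-order error in the expansion of $\delta(v,x,z)$ scales like $\alpha^3|v(x)|r^{4-\sigma}/d_{z_0}(x)^4$ after integration (the second symmetric difference kills the odd term, so the error is quartic, not $O(\Ve v\Ve_{C^3}d_x(z)^3)$ as you write), while the main negative contribution scales like $\alpha^2|v(x)|r^{2-\sigma}/d_{z_0}(x)^2$. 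Their ratio grows \emph{linearly} in $\alpha$ for any fixed truncation ratio $r/d_{z_0}(x)$; the same happens after you trade $r/d_{z_0}(x)$ against the tail $I_3$ (which is worse for small $r$). Increasing $\alpha$ only makes the error term win. What actually saves the day in the paper is that the offending terms carry an explicit factor of $(2-\sigma)$ (e.g.\ the $(2-\sigma)/(4-\sigma)$ in \eqref{eq:I_1,2} and the $(2-\sigma)/(1-2^{-\sigma})$ in \eqref{eq:I_3}), and the proof \emph{chooses} $\sigma_0$ close to $2$, as a function of $n,\lambda,\Lambda$, after fixing $\alpha$, so that these terms vanish. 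That $\sigma_0$ is an output of the proof, not an input, is stated explicitly in the lemma (``there are universal constants $\alpha>0$ and $\sigma_0\in(0,2)$\ldots''), and the companion \Cref{lem:barrier2} is precisely the device that removes the restriction $\sigma\geq\sigma_0$ afterwards by shrinking $\kappa$. Your argument, as written, would not produce such a $\sigma_0$ and would fail for $\sigma$ bounded away from $2$.

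A secondary point: the paper does not go through $D^2v$ and a Taylor remainder at all. It uses Toponogov's comparison to dominate $d_{z_0}(z)$ and $d_{z_0}(\mathcal{T}_x(z))$ by Euclidean distances, and then invokes the sharp algebraic inequality for power functions from \cite[Lemma 9.1]{CS09}, which directly yields the bound \eqref{eq:second_difference} with the correct quartic error and explicit $\alpha$-dependence. Your route through the Hessian of $d_{z_0}^2/2$ and \eqref{eq:Hessian} is workable in principle, but you would need the fourth-order symmetric-difference remainder, not the cubic one, and you would still land on exactly the same $(2-\sigma)$-smallness constraint. Finally, note that the ``at most $(n-1)$ mildly convex tangential eigenvalues'' estimate only becomes useful after the curvature-dependent Jacobian $\det(D\exp_x)$ is handled, which is where the hypothesis $5R<\pi/\sqrt{K}$ enters (cf.\ \eqref{eq:sint_t}); your proposal does not address this, though the fix is the same as in the paper.
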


\begin{proof}
Fix $x$ and let $R_0 := d_{z_0}(x) \in (\rho_0 R, 5R)$. Let us consider normal coordinates centered at $x$, then by \eqref{eq:inj_x} the point $z_0$ is included in the normal coordinates. Thus, we may assume that $\exp_x^{-1} z_0 = R_0 e_1$. Let $\xi := \exp_x^{-1} z$. By the Toponogov's triangle comparison (see, e.g., \cite{Pet16}), we have $d_{z_0}(z) \leq \ve R_0 e_1 - \xi \ve_\delta$ and $d_{z_0}(\mathcal{T}_x(z)) \leq \ve R_0 e_1 + \xi \ve_\delta$, and hence,
\begin{equation*}
\delta(v, x, z) \leq - \left( \frac{1}{5R} \right)^{-2\alpha} \left( \ve R_0 e_1 + \xi \ve_\delta^{-2\alpha} + \ve R_0 e_1 - \xi \ve_\delta^{-2\alpha} - 2 R_0^{-2\alpha} \right)
\end{equation*}
for $d(z, x) \leq R_0/2$. As $T_x M$ being identified as $\mathbb{R}^n$, a simple algebraic inequality shows that
\begin{equation} \label{eq:second_difference}
\delta (v, x, z) \leq 2\alpha \left( \frac{R_0}{5R} \right)^{-2\alpha} \left( \frac{\ve \xi \ve^2}{R_0^2} - (2\alpha+2) \frac{\xi_1^2}{R_0^2} + (2\alpha+2)(\alpha+2) \frac{\xi_1^2 \ve \xi \ve^2}{R_0^4} \right)
\end{equation}
as in the proof of \cite[Lemma 9.1]{CS09}.

Let us take $\alpha = \alpha(n, \lambda, \Lambda) > 0$ sufficiently large so that
\begin{equation} \label{eq:alpha}
\lambda (2\alpha+2) \left( \frac{3}{\pi} \right)^{n-1} \int_{\partial B_1} v_1^2 \, \d v - \Lambda \ve \partial B_1 \ve > C_1 \Lambda,
\end{equation}
for some universal constant $C_1 > 0$ to be determined later, where $\d v$ is the usual spherical measure on $\partial B_1$. Then,
\begin{align} \label{eq:I}
\begin{split}
R^\sigma \mathcal{M}^+_{\mathcal{L}_0} v (x)
\leq ~ & (2-\sigma) R^\sigma \int_{B(0, R_0/2)} \frac{\Lambda \delta(v, x, z)_+ - \lambda \delta(v, x, z)_-}{\mu_{g^\ast} (B(0, \ve \xi \ve)) \ve \xi \ve^\sigma} \, \d V_s^\ast (\xi) \\
&+ R^\sigma \sup_{L \in \mathcal{L}_0} \int_{B(x, R_0/2)} (v(z) - v(x)) \nu_x(z) \, \d V_a (z) \\
&+ R^\sigma \sup_{L \in \mathcal{L}_0} \int_{M \setminus B(x, R_0/2)} (v(z) - v(x)) \nu_x(z) \, \d V(z) =: I_1 + I_2 + I_3.
\end{split}
\end{align}
For $I_1$, we use \eqref{eq:second_difference} to obtain
\begin{equation} \label{eq:I_1}
I_1 \leq 2\alpha (2-\sigma) \frac{R^\sigma}{R_0^2} \left( \frac{5R}{R_0} \right)^{2\alpha} \int_{B_{R_0/2}} \frac{\Lambda \ve \xi \ve^2 - \lambda(2\alpha+2) \xi_1^2 + \Lambda (2\alpha+2) (\alpha+2) \xi_1^2 \ve \xi \ve^2 R_0^{-2}}{\mu_{g^\ast}(B(0, \ve \xi \ve)) \ve \xi \ve^\sigma} \, \d V_s^\ast (\xi).
\end{equation}
Since the sectional curvatures on $B(x, R_0/2)$ are bounded by $K$ from above and 0 from below, we have
\begin{align*}
I_{1,1}
:=& \int_{B_{R_0/2}} \frac{\Lambda \ve \xi \ve^2 - \lambda(2\alpha+2) \xi_1^2}{\mu_{g^\ast}(B(0, \ve \xi \ve)) \ve \xi \ve^\sigma} \, \d V_s^\ast (\xi) \\
=& \int_0^{R_0/2} \int_{\partial B_1} \frac{\Lambda - \lambda(2\alpha+2) v_1^2}{\mu_{g^\ast}(B(0, t)) t^\sigma} \det(D \exp_x)(tv) t^{n+1} \, \d v \, \d t \\
\leq& \int_0^{R_0/2} \int_{\partial B_1} \left( \Lambda - \lambda(2\alpha+2) \bigg( \frac{\sin(\sqrt{K} t)}{\sqrt{K} t} \bigg)^{n-1} v_1^2 \right) \d v \frac{t^{n+1}}{\mu_{g^\ast}(B(0, t)) t^\sigma} \, \d t.
\end{align*}
As in the proof of \Cref{lem:geometric}, we observe from $t \leq R_0/2 < \frac{5}{2} R < \frac{\pi}{6\sqrt{K}}$ that
\begin{equation} \label{eq:sint_t}
\frac{\sin(\sqrt{K}t)}{\sqrt{K} t} \geq \frac{\sin (\pi/6)}{\pi/6} = \frac{3}{\pi}.
\end{equation}
Thus, \eqref{eq:alpha}, \eqref{eq:sint_t}, and the Gromov's theorem yield that
\begin{equation} \label{eq:I_1,1}
I_{1,1} \leq -C_1 \Lambda \int_0^{R_0/2} \frac{t^{n+1}}{\ve B_t \ve t^\sigma} \, \d t = - \frac{C_1 \Lambda}{(2-\sigma) \omega_n} (R_0/2)^{2-\sigma}.
\end{equation}
Similarly, we obtain that
\begin{align} \label{eq:I_1,2}
\begin{split}
I_{1,2}
:=&~ \int_{B_{R_0/2}} \frac{\Lambda(2\alpha+2)(\alpha+2) \xi_1^2 \ve \xi \ve^2 R_0^{-2}}{\mu_{g^\ast}(B(0, \ve \xi \ve)) \ve \xi \ve^\sigma} \, \d V_s^\ast (\xi) \\
\leq&~ C \frac{\Lambda(2\alpha+2)(\alpha+2)}{R_0^2} \int_0^{R_0/2} \frac{t^{n+3}}{\mu_{g^\ast}(B(0, t)) t^\sigma} \, \d t \\
\leq&~ C \frac{\Lambda(2\alpha+2)(\alpha+2)}{R_0^2} \left( \frac{3}{\pi} \right)^{1-n} \int_0^{R_0/2} \frac{t^{n+3}}{\omega_n t^{n+\sigma}} \, \d t \\
\leq&~ C \frac{\Lambda(2\alpha+2)(\alpha+2)}{R_0^2} \frac{(R_0/2)^{4-\sigma}}{4-\sigma}.
\end{split}
\end{align}
Therefore, combining \eqref{eq:I_1}, \eqref{eq:I_1,1}, and \eqref{eq:I_1,2}, and using $R_0 \in (\rho_0, 5R)$, we estimate $I_1$ as
\begin{equation} \label{eq:I_1_final}
I_1 \leq 2\alpha \Lambda \left( - c C_1 + C \frac{2-\sigma}{4-\sigma} (2\alpha+2)(\alpha+2) \right),
\end{equation}
for some universal constants $c, C > 0$, with $c$ independent of $\alpha$.

For $I_2$, we use a similar computation in \Cref{lem:well-definedness} to obtain
\begin{equation} \label{eq:I_2}
I_2 \leq \frac{C}{\sigma} \Lambda.
\end{equation}
On the other hand, since $v$ is bounded, by following the proof of \Cref{lem:integrability} we have
\begin{equation} \label{eq:I_3}
I_3 \leq C \Lambda \left( \frac{R}{r} \right)^\sigma \frac{2-\sigma}{1-2^{-\sigma}}
\end{equation}
for some $C = C(n, \lambda, \Lambda, \alpha) > 0$. Thus, \eqref{eq:I}, \eqref{eq:I_1_final}, \eqref{eq:I_2}, and \eqref{eq:I_3} yield
\begin{equation*}
R^\sigma \mathcal{M}^+_{\mathcal{L}_0} v (x) \leq 2\alpha \Lambda \left( - c C_1 + C \frac{2-\sigma}{4-\sigma} (2\alpha+2)(\alpha+2) \right) + \frac{C}{\sigma}\Lambda + C \Lambda \left( \frac{R}{r} \right)^\sigma \frac{2-\sigma}{1-2^{-\sigma}}.
\end{equation*}
We now choose $\sigma_0$ close to 2 so that the terms containing $(2-\sigma)$ become small. Then we obtain
\begin{equation*}
R^\sigma \mathcal{M}^+_{\mathcal{L}_0} v (x) \leq -c C_1 \Lambda + \frac{C}{\sigma_0} \Lambda,
\end{equation*}
which finishes the proof by assuming that we have taken $\alpha$ sufficiently large so that \eqref{eq:alpha} holds with $-cC_1+\frac{C}{\sigma_0}<-1$.
\end{proof}

\begin{lemma} \label{lem:barrier2}
For $z_0 \in M$, let $K$ be the supremum of the sectional curvatures in $B_{\mathrm{inj}(z_0)}(z_0)$ and let $15R < \mathrm{inj}(z_0) \land \frac{\pi}{\sqrt{K}}$. Given any $\sigma_0 \in (0,2)$, there are universal constants $\alpha > 0$ and $\kappa \in (0, 1/4]$, depending only on $n$, $\lambda$, $\Lambda$, and $\sigma_0$, such that the function
\begin{equation*}
v(x) = \max \bigg\lbrace - \left( \frac{\kappa \rho_0}{5} \right)^{-2\alpha}, - \left( \frac{d_{z_0}(x)}{5R} \right)^{-2\alpha} \bigg\rbrace
\end{equation*}
is a supersolution to
\begin{equation*}
R^{\sigma} \mathcal{M}^+_{\mathcal{L}_0} v(x) + \Lambda \leq 0,
\end{equation*}
for every $\sigma_0 <\sigma <2$ and $x \in B_{5R}(z_0) \setminus \overline{B}_{\rho_0 R}(z_0)$.
\end{lemma}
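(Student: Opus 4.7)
The plan is to use \Cref{lem:barrier1} directly when $\sigma$ is close to $2$ and to re-run the barrier computation from scratch when $\sigma$ is bounded away from $2$, where the small parameter $\kappa$ will deepen the plateau of $v$ near $z_0$ and thereby produce a large negative contribution to the nonlocal integral.

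Let $\tilde\sigma_0\in(0,2)$ and $\bar\alpha>0$ be the universal constants supplied by \Cref{lem:barrier1}, and fix $\alpha\ge\bar\alpha$ large enough that $2\alpha>n$. Write $w$ for the \Cref{lem:barrier1} barrier, $w(x)=\max\{-(\rho_0/20)^{-2\alpha},-(d_{z_0}(x)/5R)^{-2\alpha}\}$. For every $\kappa\in(0,1/4]$ the inequality $-(\kappa\rho_0/5)^{-2\alpha}\le-(\rho_0/20)^{-2\alpha}$ gives $v\le w$ pointwise on $M$, while $v(x)=w(x)=-(d_{z_0}(x)/5R)^{-2\alpha}$ at every $x$ in the domain $B_{5R}(z_0)\setminus\overline{B}_{\rho_0 R}(z_0)$. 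Nonnegativity of $\nu_x$ then yields $\mathcal{M}^+_{\mathcal{L}_0}v(x)\le\mathcal{M}^+_{\mathcal{L}_0}w(x)$, so \Cref{lem:barrier1} already covers $\sigma\in[\tilde\sigma_0,2)$. It therefore suffices to handle $\sigma\in(\sigma_0,\tilde\sigma_0)$, which we may assume is nonempty.

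For that range, I would re-use the splitting from the proof of \Cref{lem:barrier1}: write $R^{\sigma}\mathcal{M}^+_{\mathcal{L}_0}v(x)\le I_1+I_2+I_3$ with $I_1,I_2$ integrals on $B(x,R_0/2)$ and $I_3$ on $M\setminus B(x,R_0/2)$, where $R_0=d_{z_0}(x)\in(\rho_0 R,5R)$. Since $\kappa\le 1/4$ and $R_0\ge\rho_0 R$, any $z\in B(x,R_0/2)$ satisfies $d_{z_0}(z)\ge R_0/2\ge\kappa\rho_0 R$, so on that ball $v$ coincides with the smooth function $-(d_{z_0}/5R)^{-2\alpha}$, and the estimates for $I_1$ and $I_2$ in the proof of \Cref{lem:barrier1} carry over verbatim. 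Because $\sigma$ now sits in the compact interval $[\sigma_0,\tilde\sigma_0]$, they yield $I_1+I_2\le C_\ast=C_\ast(n,\lambda,\Lambda,\alpha,\sigma_0)$ uniformly in $\kappa$. The main new estimate is for $I_3$: using the pointwise-extremal upper bound $\sup_{L\in\mathcal{L}_0}Lv(x)\le R^{\sigma}\int[\Lambda(v(z)-v(x))_+-\lambda(v(z)-v(x))_-](2-\sigma)/(\mu_g(B(x,d(x,z)))d(x,z)^{\sigma})\,\mathrm{d}V$, split the integration domain into $A:=B(z_0,\kappa\rho_0 R)$ and its complement. On $A$ the plateau gives $v(z)-v(x)\le-\tfrac{1}{2}(\kappa\rho_0/5)^{-2\alpha}$ once $\kappa$ is small; the triangle inequality yields $d(x,z)\in(3R_0/4,6R)$ there, so $\mu_g(B(x,d(x,z)))d(x,z)^{\sigma}\le CR^{n+\sigma}$ by Gromov's theorem and \eqref{eq:comparability}, while \eqref{eq:VD} together with the Euclidean asymptotic of small volumes gives $\mu_g(A)\ge c(\kappa\rho_0 R)^n$. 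These bounds control the $A$-contribution to $I_3$ by $-c\lambda(2-\sigma)(\kappa\rho_0)^{n-2\alpha}$. On the complement of $A$, the positive part of $v(z)-v(x)$ is bounded by $(5/\rho_0)^{2\alpha}$ independently of $\kappa$, because $v(z)>v(x)$ forces $d_{z_0}(z)>R_0$ and then $(d_{z_0}(z)/5R)^{-2\alpha}\le(5/\rho_0)^{2\alpha}$; the annular decomposition used in the proof of \Cref{lem:integrability} then controls this piece by some $C'=C'(n,\Lambda,\alpha,\rho_0,\sigma_0)$.

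Collecting everything,
\[
R^{\sigma}\mathcal{M}^+_{\mathcal{L}_0}v(x)+\Lambda\le C_\ast+C'+\Lambda-c\lambda(2-\tilde\sigma_0)(\kappa\rho_0)^{n-2\alpha}.
\]
Since $2\alpha>n$, the last term tends to $-\infty$ as $\kappa\to 0^+$, so one chooses $\kappa=\kappa(n,\lambda,\Lambda,\sigma_0)\in(0,1/4]$ small enough to make the right-hand side nonpositive. The principal obstacle is the bookkeeping that keeps $C_\ast$ and $C'$ genuinely independent of $\kappa$; this rests on the positive part of $v(z)-v(x)$ lying entirely in $\{d_{z_0}(z)>R_0\}$, where $v$ is the smooth function of bounded magnitude, so that only $A$ introduces $\kappa$-dependence. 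Mildly enlarging $\alpha$ beyond the value coming from \Cref{lem:barrier1} ensures $n-2\alpha<0$ and hence that the plateau contribution blows up as $\kappa\to 0$.
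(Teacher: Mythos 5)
Your proof follows the paper's two-regime strategy---invoke \Cref{lem:barrier1} when $\sigma$ is close to $2$, deepen the plateau via a small $\kappa$ when $\sigma$ is bounded away from $2$---and the reduction $v\leq w$ (with equality near $x$, so that $\mathcal{M}^+_{\mathcal{L}_0}v\leq\mathcal{M}^+_{\mathcal{L}_0}w$) is a clean way to handle the first regime. The one substantive difference concerns where the $\kappa$-gain actually comes from. The paper's proof estimates the symmetric integral over $B(x,R_0/2)$, collapses the tail into a constant $C$, and asserts that a small $\kappa$ makes $I_1+I_2+C<-\Lambda$; but on $B(x,R_0/2)$ the function $v$ coincides with the $\kappa$-independent smooth profile $-(d_{z_0}/5R)^{-2\alpha}$ (since the plateau ball $B(z_0,\kappa\rho_0 R)$ sits at distance $\geq R_0-\kappa\rho_0 R>R_0/2$ from $x$ for $\kappa\leq 1/4$ and $R_0\geq\rho_0 R$), so $I_1$ and $I_2$ are independent of $\kappa$ and the remark that ``$\delta_-$ is not integrable'' does not apply there. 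You correctly place the $\kappa$-dependence in the tail $I_3$: the contribution from $A=B(z_0,\kappa\rho_0 R)$ carries $v(z)-v(x)\leq-\tfrac12(\kappa\rho_0/5)^{-2\alpha}$ and, together with the volume estimates, yields the term $-c\lambda(2-\sigma)(\kappa\rho_0)^{n-2\alpha}$ which diverges as $\kappa\to0$ since $2\alpha>n$. In that step your version is the correct implementation of what the paper's proof gestures at, so the route is the same but your accounting is tighter.

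Two small imprecisions worth fixing. First, once you set $\alpha\geq\bar\alpha$ with $2\alpha>n$, you cannot invoke \Cref{lem:barrier1} as a black box, since it produces a specific pair $(\bar\alpha,\tilde\sigma_0)$; you should note that its proof goes through for any larger $\alpha$ after adjusting $\tilde\sigma_0$ closer to $2$, because the factor $(2\alpha+2)(\alpha+2)$ in \eqref{eq:I_1_final} grows with $\alpha$. (The paper has the same silent gap when it resets $\alpha=\max(\alpha_0,n/2)$.) Second, the lower bound on the $A$-contribution should not rest on a ``Euclidean asymptotic of small volumes''; the estimate you need is simply
\begin{equation*}
\frac{\mu_g(A)}{\mu_g(B(x,6R))(6R)^\sigma}\ \geq\ \frac{a_2^{-1}\,\mu_g(B(x,\kappa\rho_0 R))}{\mu_g(B(x,6R))(6R)^\sigma}\ \geq\ a_2^{-1}\Big(\frac{\kappa\rho_0}{6}\Big)^n(6R)^{-\sigma},
\end{equation*}
using \eqref{eq:comparability} and then \eqref{eq:VD}, with no limiting argument required.
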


\begin{proof}
Let $\sigma_1$ and $\alpha_0$ be the $\sigma_0$ and $\alpha$ in \Cref{lem:barrier1}, respectively. If $\sigma \geq \sigma_1$, then the desired result holds with $\alpha_0$ and $\kappa = 1/4$. Now for $\sigma_0 < \sigma < \sigma_1$, we
will show the result still holds if we choose $\kappa$ small enough.

Let $\alpha = \max(\alpha_0, n/2)$. For $x$ with $R_0 = d_{z_0}(x) \in (\rho_0 R, 5R)$,
let us consider normal coordinates centered at $x$.
Then, as in \Cref{lem:barrier1} we have
\begin{align*}
R^\sigma \mathcal{M}^+_{\mathcal{L}_0} v (x)
\leq ~ & (2-\sigma) R^\sigma \int_{B(0, R_0/2)} \frac{\Lambda \delta(v, x, z)_+}{\mu_{g^\ast} (B(0, \ve \xi \ve)) \ve \xi \ve^\sigma} \, \d V_s^\ast (\xi) \\
&- (2-\sigma) R^\sigma \int_{B(0, R_0/2)} \frac{\lambda \delta(v, x, z)_-}{\mu_{g^\ast} (B(0, \ve \xi \ve)) \ve \xi \ve^\sigma} \, \d V_s^\ast (\xi) + C =: I_1 + I_2 + C.
\end{align*}
Since $v \in C^2$ in $B(x, R_0/2)$ and $v$ is bounded above, $\delta_+$ is bounded above.
Hence, $I_1 \le C$ for some universal constant. On the other hand, since $\delta_-$ is not
integrable and $\sigma < \sigma_1 < 2$, we choose $\kappa$ small enough so that $I_1 + I_2 + C < - \Lambda$.
\end{proof}

\begin{corollary} \label{cor:barrier}
For $z_0 \in M$, let $K$ be the supremum of the sectional curvatures in $B_{\mathrm{inj}(z_0)}(z_0)$ and let $15R < \mathrm{inj}(z_0) \land \frac{\pi}{\sqrt{K}}$. There is a function $v$ such that
\begin{equation*}
\begin{cases}
v \geq 0 &\text{in}~ M \setminus B_{5R}(z_0), \\
v \leq 0 &\text{in}~ B_{2R}(z_0), \\
R^\sigma \mathcal{M}^+_{\mathcal{L}_0} v + \Lambda \leq 0 &\text{in}~ B_{5R} \setminus \overline{B}_{\rho_0 R}(z_0), \\
R^\sigma \mathcal{M}^+_{\mathcal{L}_0} v \leq C &\text{in}~ B_{5R}(z_0), \\
v \geq -C &\text{in}~ B_{5R}(z_0), \\
\end{cases}
\end{equation*}
for some universal constant $C > 0$, depending only on $n$, $\lambda$, $\Lambda$, and $\sigma_0$.
\end{corollary}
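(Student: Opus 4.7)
The plan is to take the function $w$ provided by Lemma \ref{lem:barrier2} and translate it vertically to fix the sign conditions. Since each $L \in \mathcal{L}_0$ acts on differences $u(z)-u(x)$, both $L$ and $\mathcal{M}^+_{\mathcal{L}_0}$ are invariant under addition of a constant, so the supersolution property transfers to $v(x) := w(x) + 1$ with no loss, i.e.\
\begin{equation*}
R^\sigma \mathcal{M}^+_{\mathcal{L}_0} v(x) + \Lambda = R^\sigma \mathcal{M}^+_{\mathcal{L}_0} w(x) + \Lambda \leq 0 \quad \text{in } B_{5R}(z_0) \setminus \overline{B}_{\rho_0 R}(z_0)
\end{equation*}
by Lemma \ref{lem:barrier2}. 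Moreover, since $w \geq -(\kappa\rho_0/5)^{-2\alpha}$, the lower bound $v \geq -C$ in $B_{5R}(z_0)$ holds with $C = (\kappa\rho_0/5)^{-2\alpha}-1$.

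Next I would verify the two sign conditions from the size of $w$. Recall that $w$ is the maximum of the negative constant $-(\kappa\rho_0/5)^{-2\alpha}$ and the power function $-(d_{z_0}/5R)^{-2\alpha}$. Because $\kappa\rho_0 < 5$, the constant satisfies $-(\kappa\rho_0/5)^{-2\alpha} < -1$, while the power term equals $-1$ exactly when $d_{z_0} = 5R$, is $> -1$ when $d_{z_0} > 5R$, and is $< -(2/5)^{-2\alpha} \leq -1$ when $d_{z_0} < 2R$. Consequently, $w \geq -1$ on $M \setminus B_{5R}(z_0)$ (giving $v \geq 0$ there) and $w \leq -1$ on $B_{2R}(z_0)$ (giving $v \leq 0$ there).

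The remaining condition is the upper bound $R^\sigma \mathcal{M}^+_{\mathcal{L}_0} v \leq C$ on all of $B_{5R}(z_0)$. Outside $\overline{B}_{\rho_0 R}(z_0)$ it is immediate from the supersolution inequality, so the only new work is inside $B_{\rho_0 R}(z_0)$. There $v$ is uniformly bounded in $M$, and it is smooth in a neighborhood of any point with $d_{z_0} \neq \kappa\rho_0 R$: on the open sub-ball $B_{\kappa\rho_0 R}(z_0)$ it is constant, and on the open annulus $\kappa\rho_0 R < d_{z_0} < \rho_0 R$ it is a smooth power of $d_{z_0}$ (which is smooth because $\rho_0 R < \mathrm{inj}(z_0) \wedge \frac{\pi}{\sqrt{K}}$). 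For such points, Lemma \ref{lem:well-definedness}, together with the integrability estimate of Lemma \ref{lem:integrability}, yields $R^\sigma \mathcal{M}^+_{\mathcal{L}_0} v(x) \leq C$ for a universal $C$.

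The main obstacle, which is routine but the only delicate point, is the lack of $C^2$ regularity of $v$ on the sphere $\{d_{z_0} = \kappa\rho_0 R\}$: the gradient of $v$ jumps from $0$ inside to a positive radial direction outside. I would handle it in the standard way by smoothing the $\max$ with a regularized maximum $\max_\varepsilon$ to obtain $v_\varepsilon \in C^2$ coinciding with $v$ outside an $\varepsilon$-neighborhood of that sphere; since Lemma \ref{lem:barrier2} holds pointwise away from the crease and the inequalities in question are stable under $\varepsilon \to 0$, every property transfers to $v$. Equivalently, one can read the inequality in the viscosity sense across the crease. Either interpretation preserves the four remaining properties without altering the construction, completing the proof.
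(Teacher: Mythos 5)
Your overall strategy matches the paper's: use Lemma~\ref{lem:barrier2}, shift by a constant to fix the signs, use Lemmas~\ref{lem:integrability} and \ref{lem:well-definedness} to bound $\mathcal{M}^+_{\mathcal{L}_0} v$ inside $\overline{B}_{\rho_0 R}$, and repair the crease. The sign verifications are fine (with a cosmetic fix: $-(2/5)^{-2\alpha}=-(5/2)^{2\alpha}<-1$, strictly). The paper accomplishes the repair from the outset by setting $v=\psi(d_{z_0}^2/R^2)$ with $\psi$ smooth and increasing, equal to $(3/5)^{-2\alpha}-(t/25)^{-\alpha}$ for $t\geq(\kappa\rho_0)^2$; since $\psi$ is increasing, $\psi(t)\leq(3/5)^{-2\alpha}+w$ for $t<(\kappa\rho_0)^2$, i.e.\ the smooth barrier lies \emph{below} the shifted max.

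Your last paragraph is where the gap lies, and it matters for two reasons. First, the direction of the smoothing is not a matter of taste here: a generic ``regularized maximum'' satisfies $\max_\varepsilon\geq\max$, but you need $v_\varepsilon\leq w+1$ globally. If $v_\varepsilon>w+1$ anywhere inside the smoothing region, then for $x\in B_{5R}\setminus\overline{B}_{\rho_0 R}$ the nonlocal integrand $v_\varepsilon(z)-v_\varepsilon(x)$ increases relative to $(w+1)(z)-(w+1)(x)$, so $\mathcal{M}^+_{\mathcal{L}_0} v_\varepsilon(x)\geq\mathcal{M}^+_{\mathcal{L}_0}(w+1)(x)$ — the wrong inequality. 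You must smooth from below, and then still give the one-sided comparison argument: since $v_\varepsilon\leq w+1$ with equality at every $x$ outside the $\varepsilon$-neighborhood, one gets $Lv_\varepsilon(x)\leq L(w+1)(x)$ for every $L\in\mathcal{L}_0$, hence $\mathcal{M}^+_{\mathcal{L}_0} v_\varepsilon(x)\leq\mathcal{M}^+_{\mathcal{L}_0}(w+1)(x)$. Your stated reason — invariance of $\mathcal{M}^+_{\mathcal{L}_0}$ under additive constants — only applies to $v=w+1$, not to $v_\varepsilon$, which differs from $w+1$ on a set of positive measure; nonlocal operators see that difference. Second, the claim that ``the inequalities in question are stable under $\varepsilon\to0$, every property transfers to $v$'' is false: at a crease point $x$ the max has a convex kink, so $\delta(w+1,x,z)\sim m\,d_x(z)$ with $m>0$ in a cone of directions, and the integral $\int\delta_+\,\nu_x$ diverges whenever $\sigma\geq1$; thus $\mathcal{M}^+_{\mathcal{L}_0}(w+1)(x)=+\infty$ on the crease and the bound does not survive the limit. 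The resolution is not a limit at all: fix a small universal $\varepsilon$ (the paper's $\psi$ amounts to a fixed choice) and take $v_\varepsilon$ itself as the barrier — it is $C^2$, which is also required downstream when $v$ is added to $u$ and Lemma~\ref{lem:key} is invoked.
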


\begin{proof}
Let $\alpha$ and $\kappa$ be constants given in \Cref{lem:barrier2}. We define $v(x) = \psi(d_{z_0}^2(x) / R^2)$, where $\psi$ is a smooth and increasing function on $[0, \infty)$ such that
\begin{equation*}
\psi(t) = \left( \frac{3^2}{5^2} \right)^{-\alpha} - \left( \frac{t}{5^2} \right)^{-\alpha} \quad\text{if}~ t \geq (\kappa \rho_0)^2.
\end{equation*}
By \Cref{lem:barrier2}, $R^\sigma \mathcal{M}^+_{\mathcal{L}_0} v + \Lambda \leq 0$ in $B_{5R} \setminus B_{\rho_0 R}$. Thus, it only remains to show that $R^\sigma \mathcal{M}^+_{\mathcal{L}_0} v \leq C$ in $\overline{B}_{\rho_0 R}$. Indeed, for $x \in \overline{B}_{\rho_0 R}$, we have that $\ve \delta(v, x, z) \ve \leq C d_x(z)^2/R^2$ for $z \in B_R(x)$, and that $v$ is bounded by a uniform constant. Therefore, we obtain $R^\sigma \mathcal{M}^+_{\mathcal{L}_0} v (x) \leq C $ by \Cref{lem:well-definedness}.
\end{proof}

\section{\texorpdfstring{$L^\varepsilon$}{L}-estimate} \label{sec:L_eps_estimate}

This section is devoted to the so-called $L^\varepsilon$-estimate, which is the main ingredient in the proof of the Harnack inequality. It will follow from the following lemma, which connects a pointwise estimate to an estimate in measure, and the standard Calder\'on–Zygmund technique in \cite{Cab97}.

\begin{lemma} \label{lem:base}
Let $\sigma_0 \in (0,2)$ and assume $\sigma \in [\sigma_0, 2)$. Let $c_1$, $c_2$, and $\delta_0$ be the constants in \Cref{thm:dyadic_cubes}, and let $\delta = \frac{2c_1}{c_2} \delta_0$. For $z_0 \in M$, let $K$ be the supremum of the sectional curvatures in $B_{\mathrm{inj}(z_0)}(z_0)$ and let $15R < \mathrm{inj} (z_0) \land \frac{\pi}{\sqrt{K}}$. If $u \in C^2(B_{7R}(z_0))$ is a nonnegative function on $M$ satisfying $R^\sigma \mathcal{M}^-_{\mathcal{L}_0} u \leq \varepsilon_0$ in $B_{7R}(z_0)$ and $\inf_{B_{2R}} u \leq 1$, then
\begin{equation*}
\frac{\mu_g ( \lbrace u \leq M_0 \rbrace \cap B_{\delta R}(z_0))}{\mu_g(B_{7R}(z_0))} \geq c_0,
\end{equation*}
where $\varepsilon_0 > 0$, $c_0 \in (0,1)$, and $M_0 > 1$ are constants depending only on $n$, $\lambda$, $\Lambda$, $a_1$, $a_2$, and $\sigma_0$.
\end{lemma}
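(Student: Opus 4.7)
The plan is to combine the discrete ABP-type estimate (\Cref{lem:key} via \Cref{lem:ABP_discrete}) with the barrier from \Cref{cor:barrier}, following the scheme of Caffarelli--Silvestre. First I introduce $w = u + v$, where $v$ is the barrier from \Cref{cor:barrier}. Then $w \geq 0$ in $M \setminus B_{5R}(z_0)$ (since both $u \geq 0$ on $M$ and $v \geq 0$ outside $B_{5R}$), and $\inf_{B_{2R}} w \leq \inf_{B_{2R}} u \leq 1$ because $v \leq 0$ in $B_{2R}$. Using the sub-linearity of $\mathcal{M}^-_{\mathcal{L}_0}$,
\begin{equation*}
R^\sigma \mathcal{M}^-_{\mathcal{L}_0} w \leq R^\sigma \mathcal{M}^-_{\mathcal{L}_0} u + R^\sigma \mathcal{M}^+_{\mathcal{L}_0} v \leq f \quad\text{in } B_{5R}(z_0),
\end{equation*}
where $R^\sigma f \leq \varepsilon_0 - \Lambda$ in $B_{5R} \setminus \overline{B}_{\rho_0 R}$ (so $(\Lambda + R^\sigma f)_+ \leq \varepsilon_0$ there) and $R^\sigma f \leq \varepsilon_0 + C$ in $\overline{B}_{\rho_0 R}$.

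Next, I apply \Cref{lem:ABP_discrete} to $w$ to obtain a disjoint family $\mathcal{D}_1 = \{Q_\alpha^j\}$ of dyadic cubes of diameters $d_{j,\alpha} \leq \rho_0 \rho_1^{-1/(2-\sigma)} R$ whose closures cover the contact set $A_w$ and satisfy (iii) and (iv). I split $\mathcal{D}_1$ into good cubes $\mathcal{G}$ (those with $\overline{Q}_\alpha^j \subset B_{5R} \setminus \overline{B}_{\rho_0 R}$) and bad cubes $\mathcal{B}$ (those intersecting $\overline{B}_{\rho_0 R}$). From \Cref{lem:key},
\begin{equation*}
\mu_g(B_R) \leq C \varepsilon_0^n \sum_{\mathcal{G}} \mu_g(Q_\alpha^j) + C \sum_{\mathcal{B}} \mu_g(Q_\alpha^j) \leq C \varepsilon_0^n \mu_g(B_{5R}) + C \sum_{\mathcal{B}} \mu_g(Q_\alpha^j).
\end{equation*}
By the doubling property and by choosing $\varepsilon_0$ small enough, the first term is absorbed into the left-hand side, leaving $\mu_g(B_R) \leq C \sum_{\mathcal{B}} \mu_g(Q_\alpha^j)$.

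To convert this into a measure estimate for $\{u \leq M_0\}$, I invoke (iv) of \Cref{lem:ABP_discrete}: for each bad cube, a fraction $\gamma$ of its mass lies in $\tilde B_\alpha := B(z_\alpha^j, (1+4/\rho_1)c_2 \delta_0^j)$ intersected with $\{w \leq \Gamma_w + C(\Lambda + R^\sigma \max_{\overline{Q}} f)_+ d_{j,\alpha}^2/R^2\}$. Since $\Gamma_w \leq \inf_{\overline{B}_{5R}}(w + \frac{1}{2R^2} d_y^2) \leq 11/2$ universally, since $d_{j,\alpha}/R \leq \rho_0$ is bounded, and since $|v| \leq C$ in $B_{5R}$ by \Cref{cor:barrier}, we deduce that on this intersection $u = w - v \leq M_0$ for a universal $M_0$. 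Hence $\gamma \mu_g(Q_\alpha^j) \leq \mu_g(\tilde B_\alpha \cap \{u \leq M_0\})$. Every bad cube lies in $B_{2\rho_0 R}(z_0)$, so every $\tilde B_\alpha$ lies in $B_{(3+4/\rho_1)\rho_0 R}(z_0) \subset B_{\delta R}(z_0)$ by the definition of $\rho_0$ in \eqref{eq:rho}. Finally, the balls $B(z_\alpha^j, c_1 \delta_0^j) \subset Q_\alpha^j$ are pairwise disjoint and each $\tilde B_\alpha$ is a fixed dilation of this inner ball, so the doubling property of $\mu_g$ yields bounded overlap of $\{\tilde B_\alpha\}_{\mathcal{B}}$ with a universal constant $N_0$. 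Summing,
\begin{equation*}
\mu_g(B_R) \leq C \sum_{\mathcal{B}} \mu_g(\tilde B_\alpha \cap \{u \leq M_0\}) \leq C N_0 \mu_g(B_{\delta R}(z_0) \cap \{u \leq M_0\}),
\end{equation*}
and combined with the doubling bound $\mu_g(B_{7R}) \leq C \mu_g(B_R)$ this gives the lemma.

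The main technical obstacle is verifying the geometric containment $\tilde B_\alpha \subset B_{\delta R}(z_0)$ uniformly in $\sigma$, together with the bounded overlap estimate in the metric measure setting; both hinge on the careful choice of the universal constants $\rho_0, \rho_1, \delta$ in terms of $c_1, c_2, \delta_0, a_1$ (already fixed in \eqref{eq:rho}) and on the reverse volume doubling \eqref{eq:RVD} to ensure the dilated balls do not ``spread out'' beyond $B_{\delta R}$ as $\sigma \to 2$ (where $\rho_1^{-1/(2-\sigma)}$ could become extreme). The absorption step also requires choosing $\varepsilon_0$ independently of $\sigma \geq \sigma_0$, which is possible because the constants in \Cref{lem:key} depend only on $\sigma_0$ through the integrability in \Cref{lem:integrability}.
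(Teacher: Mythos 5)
Your proof follows essentially the same route as the paper's: set $w = u+v$ with the barrier $v$ from \Cref{cor:barrier}, apply \Cref{lem:key} (via \Cref{lem:ABP_discrete}), observe that cubes away from $\overline{B}_{\rho_0 R}$ contribute only $C\varepsilon_0^n \mu_g(B_{5R})$ which is absorbed by taking $\varepsilon_0$ small, then use \Cref{lem:ABP_discrete}(iv) and the geometric containment $B(z_\alpha^j,(1+4/\rho_1)c_2\delta_0^j)\subset B_{\delta R}(z_0)$ (which follows from the choice of $\rho_0$ in \eqref{eq:rho} exactly as you compute) to land in $\{u\le M_0\}\cap B_{\delta R}$.

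The one place where your argument slips is the summation step: you claim that because the inner balls $B(z_\alpha^j,c_1\delta_0^j)$ are pairwise disjoint and each $\tilde B_\alpha$ is a fixed dilation of its inner ball, doubling alone gives a uniform bound $N_0$ on the overlap of $\{\tilde B_\alpha\}$. That is false in general, even in $\mathbb{R}$: one can build infinitely many pairwise disjoint balls $B_i$ of rapidly decreasing radii, alternating sides of a point $p$, such that $p\in\lambda B_i$ for every $i$; thus disjointness plus a fixed dilation factor does not force bounded overlap when the radii vary over many scales, which they may here since the cubes in $\mathcal{D}_1$ come from a stopping-time argument at all scales $\leq \rho_0\rho_1^{-1/(2-\sigma)}R$. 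The correct replacement, and what the paper's phrase ``taking a subcover with finite overlapping'' is shorthand for, is a Vitali covering argument: extract a pairwise disjoint subfamily $\{\tilde B_\alpha\}_{\alpha\in S}$ whose $5$-dilations still cover $\bigcup_\alpha \tilde B_\alpha$, and then
\begin{equation*}
\sum_\alpha \mu_g(Q_\alpha) = \mu_g\bigl(\textstyle\bigcup_\alpha Q_\alpha\bigr) \le \sum_{\alpha\in S}\mu_g(5\tilde B_\alpha) \le C\sum_{\alpha\in S}\mu_g(\tilde B_\alpha) \le \frac{C}{\gamma}\sum_{\alpha\in S}\mu_g(\tilde B_\alpha\cap E) \le \frac{C}{\gamma}\mu_g(E),
\end{equation*}
with $E = \{u\le M_0\}\cap B_{\delta R}$, using \eqref{eq:VD} and disjointness of the $\tilde B_\alpha$, $\alpha\in S$. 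With this substitution your proof is complete and coincides with the paper's. Finally, your closing worry that $\rho_1^{-1/(2-\sigma)}$ could ``become extreme'' as $\sigma\to2$ is a non-issue: since $\rho_1>1$, this factor tends to $0$, so the cubes only shrink, and no constant degenerates.
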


\begin{proof}
Let $v$ be the barrier function constructed in \Cref{cor:barrier} and define $w = u + v$. Then $w$ satisfies $w \geq 0$ in $M \setminus B_{5R}$, $\inf_{B_{2R}} w \leq 1$, and $R^\sigma \mathcal{M}^-_{\mathcal{L}_0} w \leq \varepsilon_0 + R^\sigma \mathcal{M}^+_{\mathcal{L}_0} v$ in $B_{5R}$. By applying \Cref{lem:key} to $w$ with its envelope $\Gamma_w$, we have
\begin{equation*}
\mu_g(B_R) \leq C \sum_j \bigg( \varepsilon_0 + R^\sigma \max_{\overline{Q}_\alpha^j} \mathcal{M}^+_{\mathcal{L}_0} v + \Lambda \bigg)_+^n \mu_g(Q_\alpha^j).
\end{equation*}
Since $R^\sigma \mathcal{M}^+_{\mathcal{L}_0} v + \Lambda \leq 0$ in $B_{5R} \setminus \overline{B}_{\rho_0 R}$ and $R^\sigma \mathcal{M}^+_{\mathcal{L}_0} v \leq C$ in $B_{5R}$, we obtain
\begin{equation*}
\mu_g(B_R) \leq C \varepsilon_0^n \mu_g(B_{5R}) + C \sum_{\overline{Q}_\alpha^j \cap \overline{B}_{\rho_0 R} \neq \emptyset} \mu_g(Q_\alpha^j).
\end{equation*}
We use the volume doubling property \eqref{eq:VD} and then take $\varepsilon_0 > 0$ sufficiently small so that we have
\begin{equation*}
\mu_g(B_{7R}) \leq C \sum_{\overline{Q}_\alpha^j \cap \overline{B}_{\rho_0 R} \neq \emptyset} \mu_g(Q_\alpha^j).
\end{equation*}
By using \Cref{lem:ABP_discrete} (iv), we obtain
\begin{equation*}
\mu_g(B_{7R}) \leq C \sum_{\overline{Q}_\alpha^j \cap \overline{B}_{\rho_0 R} \neq \emptyset} \mu_g \left( B(z_\alpha^j, (1+4/\rho_1)c_2 \delta_0^j) \cap \lbrace w \leq \Gamma + C \rbrace \right).
\end{equation*}
We claim that $B(z_\alpha^j, (1+4/\rho_1)c_2 \delta_0^j) \subset B_{\delta R}(z_0)$ whenever $\overline{Q}_\alpha^j \cap \overline{B}_{\rho_0 R} \neq \emptyset$. Indeed, let $z_\ast \in \overline{Q}_\alpha^j \cap \overline{B}_{\rho_0 R}$, then for any $z \in B(z_\alpha^j, (1+4/\rho_1)c_2 \delta_0^j)$, we have
\begin{equation*}
d(z, z_0) \leq d(z, z_\alpha^j) + d(z_\alpha^j, z_\ast) + d(z_\ast, z_0) < (1+4/\rho_1)c_2 \delta_0^j + c_2 \delta_0^j + \rho_0 R.
\end{equation*}
We recall from the construction of $Q_\alpha^j$ and \eqref{eq:rho}, that $c_2 \delta_0^j \leq r_0/4 < \rho_0 R$ and
\begin{equation*}
d(z, z_0) < (3+4/\rho_1) \rho_0 R \leq \delta R,
\end{equation*}
which proves the claim. Thus, by taking a subcover of $\lbrace B(z_\alpha^j, (1+4/\rho_1)c_2 \delta_0^j) \rbrace$ with finite overlapping and using $v \geq -C$ in $B_{5R}$, we arrive at
\begin{equation*}
\mu_g(B_{7R}) \leq C \mu_g(\lbrace u \leq M_0 \rbrace \cap B_{\delta R})
\end{equation*}
for some $M_0 > 1$. Therefore, we obtain the desired result by letting $c_0 = 1/C$.
\end{proof}

Let $\delta_1 = \delta_0 (1-\delta_0)/2 \in (0,1)$. Let $k_R$ be the integer satisfying
\begin{equation*}
c_2 \delta_0^{k_R - 1} < R \leq c_2 \delta_0^{k_R - 2},
\end{equation*}
which is the generation of a dyadic cube whose size is comparable to that of some ball of radius $R$. \Cref{lem:base}, together with the Calder\'on–Zygmund technique developed in \cite{Cab97}, provides the following $L^\varepsilon$-estimate.

\begin{lemma} \label{lem:L-eps}
Let $\sigma_0 \in (0,2)$ and assume $\sigma \in [\sigma_0, 2)$. Let $\varepsilon_0$, $c_0$, and $M_0$ be the constants in \Cref{lem:base}. For $z_0 \in M$, let $R > 0$ be such that $15R < \mathrm{inj} (z_0) \land \frac{\pi}{\sqrt{K}}$. Let $u \in C^2(B_{7R}(z_0))$ be a nonnegative function on $M$ satisfying $R^\sigma \mathcal{M}^-_{\mathcal{L}_0} u \leq \varepsilon_0$ in $B_{7R}(z_0)$ and $\inf_{B(z_0, \delta_1 R)} u \leq 1$. If $Q_1$ is a dyadic cube of generation $k_R$ such that $d(z_0, Q_1) \leq \delta_1 R$, then
\begin{equation*}
\frac{\mu_g ( \lbrace u > M_0^i \rbrace \cap Q_1 )}{\mu_g(Q_1)} \leq (1 - c_0)^i.
\end{equation*}
for all $i= 1, 2, \cdots$. As a consequence, we have
\begin{equation*}
\frac{\mu_g( \lbrace u > t \rbrace \cap Q_1 )}{\mu_g(Q_1)} \leq C t^{-\varepsilon}, \quad t > 0,
\end{equation*}
for some universal constants $C > 0$ and $\varepsilon > 0$.
\end{lemma}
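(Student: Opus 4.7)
The plan is to prove the iterated density estimate $\mu_g(\{u > M_0^i\} \cap Q_1) \leq (1-c_0)^i \mu_g(Q_1)$ by induction on $i$, and then deduce the power-decay tail bound by a routine summation. Writing $A_i = \{u > M_0^i\} \cap Q_1$, the inductive step rests on the Calder\'on--Zygmund dyadic decomposition used in \cite{Cab97}: if, for every dyadic subcube $\tilde Q \subsetneq Q_1$ with $\mu_g(A_i \cap \tilde Q) > (1-c_0) \mu_g(\tilde Q)$, the dyadic predecessor $\hat Q$ satisfies $\hat Q \subset A_{i-1}$, then $\mu_g(A_i) \leq (1-c_0) \mu_g(A_{i-1})$. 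This reduction is standard: the maximal stopping cubes cover $A_i$ almost everywhere by Lebesgue differentiation, and the overlap of their predecessors is controlled by the dyadic structure of \Cref{thm:dyadic_cubes}.

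The substance is the verification of the Calder\'on--Zygmund condition, formulated contrapositively: if some $x_0 \in \hat Q$ satisfies $u(x_0) \leq M_0^{i-1}$, then $\mu_g(A_i \cap \tilde Q) \leq (1-c_0) \mu_g(\tilde Q)$. Set $\tilde u = u/M_0^{i-1}$, so $\tilde u \geq 0$, $\tilde u(x_0) \leq 1$, and $R^\sigma \mathcal{M}^-_{\mathcal{L}_0} \tilde u \leq \varepsilon_0/M_0^{i-1} \leq \varepsilon_0$ (using $M_0 > 1$). Let $j$ be the generation of $\tilde Q$, take $\tilde z_0$ to be the center of the inscribed ball of $\tilde Q$ from \Cref{thm:dyadic_cubes}(v), and pick $\tilde R$ of order $c_2 \delta_0^{j-1}$ calibrated so that
\begin{equation*}
B(\tilde z_0, \delta \tilde R) \subset \tilde Q \subset \hat Q \subset B(\tilde z_0, 2\tilde R) \subset B(\tilde z_0, 7\tilde R);
\end{equation*}
the first inclusion uses $\delta \tilde R \leq c_1 \delta_0^j$ and the third uses $\mathrm{diam}(\hat Q) \leq c_2 \delta_0^{j-1} \leq 2\tilde R$. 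Since $\tilde R < R$ and $\tilde z_0$ lies within $O(R)$ of $z_0$, the restriction $15 \tilde R < \mathrm{inj}(\tilde z_0) \land \pi/\sqrt{K}$ is inherited from the hypothesis on $z_0$. Applying \Cref{lem:base} to $\tilde u$ then yields
\begin{equation*}
\mu_g(\{u \leq M_0^i\} \cap \tilde Q) \geq \mu_g(\{\tilde u \leq M_0\} \cap B(\tilde z_0, \delta \tilde R)) \geq c_0 \mu_g(B(\tilde z_0, 7\tilde R)) \geq c_0 \mu_g(\tilde Q),
\end{equation*}
by \eqref{eq:VD} applied to $B(\tilde z_0, 7\tilde R) \supset \tilde Q$, which is precisely the desired density bound.

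The principal obstacle is the boundary case where $\tilde Q$ is a child of $Q_1$ so that $\hat Q = Q_1$: the CZ condition then demands an interior point of $Q_1$ with $u \leq M_0^{i-1}$. For $i \geq 2$ this is immediate from the inductive hypothesis, which gives $\mu_g(A_{i-1}) < \mu_g(Q_1)$. For the base case $i = 1$, however, the specific value $\delta_1 = \delta_0(1-\delta_0)/2$ is engineered precisely so that, given $\inf_{B(z_0, \delta_1 R)} u \leq 1$, one can apply \Cref{lem:base} at an auxiliary pair $(\tilde z_0, \tilde R)$---with $\tilde z_0$ the inscribed-ball center of $Q_1$ and $\tilde R$ in the narrow range forced by $B(\tilde z_0, \delta \tilde R) \subset Q_1 \subset B(\tilde z_0, 7\tilde R)$ together with $B(z_0, \delta_1 R) \subset B(\tilde z_0, 2\tilde R)$---to obtain $\mu_g(A_1) \leq (1-c_0) \mu_g(Q_1)$ directly, without needing an interior point. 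Finally, for the tail estimate: given $t \geq M_0$, pick $i \geq 1$ with $M_0^i \leq t < M_0^{i+1}$, so $\mu_g(\{u > t\} \cap Q_1) \leq (1-c_0)^i \mu_g(Q_1)$; taking $\varepsilon = -\log(1-c_0)/\log M_0$ yields the bound $C t^{-\varepsilon} \mu_g(Q_1)$, with $t < M_0$ absorbed by enlarging $C$.
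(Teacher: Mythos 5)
Your proof is correct and takes the same route the paper intends: the paper states the lemma as a consequence of \Cref{lem:base} together with "the Calder\'on--Zygmund technique developed in \cite{Cab97}" without writing out the argument, and you have supplied precisely that argument, with the scaling-and-dyadic-geometry bookkeeping (choice of $\tilde z_0$, $\tilde R = c_2\delta_0^{j-1}/2$, the inclusion chain, and the role of $\delta_1$ and $k_R$ in the base case) carried out correctly. One cosmetic remark: in the line $\mu_g(B(\tilde z_0, 7\tilde R)) \geq \mu_g(\tilde Q)$ you invoke \eqref{eq:VD}, but the inequality is just monotonicity of measure under the inclusion $\tilde Q \subset B(\tilde z_0,7\tilde R)$; no volume-doubling is needed there.
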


A simple chaining argument and \Cref{lem:L-eps} prove the following weak Harnack inequality.

\begin{theorem} [Weak Harnack inequality] \label{thm:WHI}
Let $\sigma_0 \in (0,2)$ and assume $\sigma \in [\sigma_0, 2)$. For $z_0 \in M$, let $K$ be the supremum of the sectional curvatures in $B_{\mathrm{inj}(z_0)}(z_0)$ and let $R > 0$ be such that $2R < \mathrm{inj} (z_0) \land \frac{\pi}{\sqrt{K}}$. If $u \in C^2(B_{2R}(z_0))$ is a nonnegative function satisfying $\mathcal{M}^-_{\mathcal{L}_0} u \leq C_0$ in $B_{2R}(z_0)$, then
\begin{equation*}
\left( \frac{1}{\mu_g(B_R)} \int_{B_R} u^p \, \d V(z) \right)^{1/p} \leq C \left( \inf_{B_R} u + C_0 R^\sigma \right),
\end{equation*}
where $p > 0$ and $C > 0$ are universal constants depending only on $n$, $\lambda$, $\Lambda$, $a_1$, $a_2$, and $\sigma_0$.
\end{theorem}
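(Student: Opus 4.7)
The plan is to derive \Cref{thm:WHI} from the $L^\varepsilon$-estimate of \Cref{lem:L-eps} via a chaining argument and a layer-cake computation. The principal gap to bridge is the radius condition: \Cref{lem:L-eps} requires $15R < \mathrm{inj}(z_0) \wedge \pi/\sqrt{K}$, whereas here only $2R < \mathrm{inj}(z_0) \wedge \pi/\sqrt{K}$ is available; hence the $L^\varepsilon$-estimate has to be applied at a smaller scale $r \leq 2R/15$ and its conclusion propagated across $B_R(z_0)$.

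I would first normalize. Set $\kappa := \inf_{B_R(z_0)} u + C_0 R^\sigma$, assumed positive and finite (for the general case one can add a small constant and pass to the limit), and replace $u$ by $\tilde u := (\varepsilon_0/\kappa) u$, where $\varepsilon_0$ is the constant from \Cref{lem:L-eps}. Then $\tilde u \geq 0$ on $M$, $\inf_{B_R(z_0)} \tilde u \leq \varepsilon_0 \leq 1$, and $R^\sigma \mathcal{M}^-_{\mathcal{L}_0}\tilde u \leq \varepsilon_0$. It suffices to prove the distributional decay
\begin{equation*}
\mu_g(\{\tilde u > t\} \cap B_R(z_0)) \leq C\, t^{-\varepsilon}\, \mu_g(B_R(z_0)), \quad t > 0,
\end{equation*}
for universal $C, \varepsilon > 0$, since the layer-cake identity then gives, for any $p \in (0, \varepsilon)$,
\begin{equation*}
\frac{1}{\mu_g(B_R)} \int_{B_R} \tilde u^p \, \d V \leq 1 + pC \int_1^\infty t^{p-1-\varepsilon}\, \d t \leq C',
\end{equation*}
after a split at $t = 1$. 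Unscaling $\tilde u = (\varepsilon_0/\kappa) u$ yields the stated $L^p$ inequality.

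For the distributional bound, set $r := 2R/15$ so that $15r < \mathrm{inj}(z_0) \wedge \pi/\sqrt{K}$, and note that \Cref{lem:L-eps} can be invoked on balls of radius $r$ centered at any point of $\overline{B_R(z_0)}$. Choose $y_0 \in \overline{B_R(z_0)}$ with $\tilde u(y_0) \leq 2\varepsilon_0$; then \Cref{lem:L-eps} gives the decay $\mu_g(\{\tilde u > t\} \cap Q) \leq Ct^{-\varepsilon}\mu_g(Q)$ for dyadic cubes $Q$ of generation $k_r$ near $y_0$. To carry the bound across $B_R(z_0)$, I would chain: the pointwise decay produces, within the first such cube, a point $y_1$ with $\tilde u(y_1) \leq M_0$; rescaling $\tilde u \mapsto (\varepsilon_0/M_0)\tilde u$ and reapplying \Cref{lem:L-eps} centered at $y_1$ yields the same decay on the next cube, with a multiplicative loss of $(M_0/\varepsilon_0)^\varepsilon$ in the constant. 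Since $R/r = 15/2$ is universal and the assumptions \eqref{eq:VD}--\eqref{eq:comparability} bound the number of generation-$k_r$ cubes fitting in $B_R(z_0)$ by a universal integer $N$, a chain of length at most $N$ suffices to reach every such cube, producing the required distributional inequality with a universal constant.

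The main obstacle I anticipate is carrying out the chaining so that the final constant depends only on the listed parameters, uniformly in $\sigma \in [\sigma_0, 2)$: the chain length $N$ must be controlled through the universal ratio $R/r$ together with \eqref{eq:VD} and \eqref{eq:comparability}, while the per-step loss from \Cref{lem:L-eps} must be robust as $\sigma \to 2$, which was already ensured in the proof of the $L^\varepsilon$-estimate.
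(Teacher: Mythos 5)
Your proposal is correct and takes essentially the same approach as the paper, which itself offers no proof beyond pointing to the chaining argument of Cabr\'e's Theorem 8.1 combined with \Cref{lem:L-eps} (normalize, apply the $L^\varepsilon$-estimate at a reduced scale, chain across $B_R(z_0)$, and finish by a layer-cake integration). The only detail worth double-checking is that the injectivity-radius and curvature hypotheses of \Cref{lem:L-eps} must hold at the shifted chain centers $y \in B_R(z_0)$, not just at $z_0$; with $r = 2R/15$ this is borderline since one only gets $\mathrm{inj}(y) > R$ from $\mathrm{inj}(y) \geq (\mathrm{inj}(z_0) \wedge \mathrm{conj}(y)) - d(y,z_0)$ (the estimate from \cite{Xu18} used in \Cref{lem:good_ring}), so a slightly smaller universal fraction of $R$ is the safer choice, though this does not change the structure of the argument.
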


See, for instances, \cite[Theorem 8.1]{Cab97} for the proof of \Cref{thm:WHI}.

\section{Harnack inequality} \label{sec:Harnack}

In this section we prove the following theorem, from which \Cref{thm:Harnack} will follow. Let us recall that $\delta_1 = \delta_0 (1-\delta_0)/2 \in (0,1)$.

\begin{theorem}
Let $\sigma_0 \in (0,2)$ and assume $\sigma \in [\sigma_0, 2)$. For $z_0 \in M$, let $K$ be the supremum of the sectional curvatures in $B_{\mathrm{inj}(z_0)}(z_0)$ let $R > 0$ be such that $15R < \mathrm{inj} (z_0) \land \frac{\pi}{\sqrt{K}}$. If a nonnegative function $u \in C^2(B_{7R}(z_0))$ satisfies
\begin{equation*}
R^\sigma \mathcal{M}^-_{\mathcal{L}_0} u \leq \varepsilon_0 \quad\text{and}\quad R^\sigma \mathcal{M}^+_{\mathcal{L}_0} u \geq -\varepsilon_0 \quad\text{in}~ B_{7R}(z_0)
\end{equation*}
and $\inf_{B(z_0, \delta_1 R)} u \leq 1$, then
\begin{equation*}
\sup_{B(z_0, \delta_1 R/4)} u \leq C,
\end{equation*}
where $\varepsilon_0 > 0$ and $C > 0$ are universal constants depending only on $n$, $\lambda$, $\Lambda$, $a_1$, $a_2$, and $\sigma_0$.
\end{theorem}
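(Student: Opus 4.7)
The plan is to argue by contradiction, adapting the iteration scheme of Caffarelli--Silvestre \cite{CS09} to the manifold setting. The key ingredients have already been assembled: the $L^\varepsilon$-estimate of Lemma 4.3 (equivalently, the weak Harnack inequality of Theorem 4.4) gives the distributional bound
\begin{equation*}
\mu_g(\{u > t\} \cap Q) \leq C\, t^{-\varepsilon} \mu_g(Q)
\end{equation*}
for every dyadic cube $Q$ meeting $\overline{B(z_0, \delta_1 R)}$, after chaining through finitely many dyadic cubes of generation $k_R$ as in Lemma 4.3. Assume toward contradiction that some $x_0 \in \overline{B(z_0, \delta_1 R/4)}$ satisfies $u(x_0) \geq A$ for an arbitrarily large constant $A$. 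The goal is to construct a sequence $(x_k) \subset B(z_0, \delta_1 R)$ with $u(x_k) \to \infty$, which contradicts the distributional bound above for any fixed cube containing all $x_k$.

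The inductive construction proceeds as follows. Given $x_k$ with $M_k := u(x_k) \geq A\, 2^k$, the core claim is a growth lemma: there exist universal constants $\beta > 0$ and $M_\ast > 0$ such that, whenever $M_k \geq M_\ast$, one can find $x_{k+1}$ with $d(x_k, x_{k+1}) \leq r_k := c R\, M_k^{-\beta}$ and $u(x_{k+1}) \geq 2 M_k$. Assuming this, the radii $r_k$ are summable with total less than $\delta_1 R/4$ for $A$ large, so the sequence remains in $B(z_0, \delta_1 R/2)$ and $u(x_k) \to \infty$ yields the required contradiction. To prove the claim, suppose the opposite: $u \leq 2 M_k$ throughout $B(x_k, r_k)$. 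Rescale the barrier $v$ of Corollary 4.6 to scale $r_k$ and recenter it at $x_k$; this is legitimate because $15R < \mathrm{inj}(z_0) \wedge \pi/\sqrt{K}$ combined with the injectivity-radius estimate used in Lemma 3.3 implies $B(x_k, 5 r_k) \subset B_{\mathrm{inj}(x_k)}(x_k)$ where the curvature bound is still $K$. The hypothesis $R^\sigma \mathcal{M}^+_{\mathcal{L}_0} u \geq -\varepsilon_0$, coupled with the pointwise bound $\mathcal{M}^+_{\mathcal{L}_0} v \leq -C r_k^{-\sigma}$ of the barrier outside its core, lets one evaluate $\mathcal{M}^+_{\mathcal{L}_0}(u - \eta M_k v)$ at $x_k$ and estimate $u(x_k)$ from above by a sum of three pieces: the $L^\varepsilon$-controlled far-field tail of $u$, the pointwise bound $2 M_k$ on $B(x_k, r_k)$ against the barrier density, and the barrier-induced negative contribution. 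Choosing $\beta$ so that $\beta\sigma$ slightly exceeds $\varepsilon$ forces this sum to be strictly less than $M_k$, contradicting $u(x_k) = M_k$.

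The main obstacle will be the scale balancing in the growth step: one must select $\beta$ so that the $L^\varepsilon$-tail contribution $\sim M_k^{1-\varepsilon} r_k^{-\sigma}$ and the far-field $L^\infty(M)$-contribution (controlled by $\|u\|_{L^\infty}$ only through the distributional estimate, not a uniform bound) are both strictly smaller than the barrier gain $\sim M_k \cdot r_k^{-\sigma}$, while keeping $\sum_k r_k$ small. Robustness in $\sigma \in [\sigma_0, 2)$ must be preserved throughout, which is guaranteed by the uniform $\varepsilon_0$, $\alpha$, and $\kappa$ produced in Corollary 4.6 and by Lemma 3.4. The only manifold-specific issue, beyond the recentering of the barrier, is the need to compare $\mu_g$ across different centers; this is where the volume comparability \eqref{eq:comparability} and reverse volume doubling \eqref{eq:RVD} are used, to transfer both the $L^\varepsilon$-estimate and the measure fractions in the growth lemma from dyadic cubes near $z_0$ to balls centered at the successive $x_k$. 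Once these transfers are in place, the remainder of the iteration closely follows the Euclidean template of \cite{CS09}.
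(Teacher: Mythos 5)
Your proposal departs from the paper's argument in a substantive way, and I think the departure introduces a real gap.

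The paper (following the scheme of Caffarelli--Silvestre that you cite) does \emph{not} build a growth sequence $(x_k)$ with $u(x_k)\to\infty$. Instead it introduces a touching function $h_t(x)=t\bigl(1-d_{z_0}(x)/(\delta_1 R)\bigr)^{-\gamma}$ with $\gamma=n/\varepsilon$, minimizes $t$ so that $u\leq h_t$ in $B(z_0,\delta_1 R)$, and analyzes a single contact point $x_0$. The two facts that collide are: (a) the $L^\varepsilon$-estimate applied to $u$ near $z_0$ forces $\mu_g(\{u>u(x_0)/2\}\cap Q_1)\lesssim t^{-\varepsilon}(r/R)^n\mu_g(Q_1)$; and (b) the $L^\varepsilon$-estimate applied to $w=\bigl((1-\theta/2)^{-\gamma}u(x_0)-u\bigr)_+$ in a small cube $Q_2\subset Q_1$ around $x_0$ forces the complementary set to occupy at least half of $Q_2$. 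For $t$ large these are incompatible. Your iterative scheme is an alternative strategy that appears in some nonlocal Harnack proofs, but it is not the CS09 approach, and you should not describe it as ``adapting the iteration scheme of Caffarelli--Silvestre.''

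More seriously, your proposal does not address the central technical obstruction in either approach: controlling the far-field tail
\begin{equation*}
\int_{M\setminus B(x_k,r_k)}\frac{(u(z)-cM_k)_+}{\mu_g(B(x,d_x(z)))\,d_x(z)^\sigma}\,\d V(z),
\end{equation*}
which enters when you pass from $\mathcal{M}^-w$ to $-\mathcal{M}^+u$ plus tail. You state this tail is ``controlled by $\|u\|_{L^\infty}$ only through the distributional estimate,'' but the distributional bound from the $L^\varepsilon$-estimate holds only on dyadic cubes near $z_0$, not globally on $M$, and in any case it bounds measures of superlevel sets rather than weighted integrals of $(u-c)_+$. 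The paper resolves this by touching $u$ from below at a point $x_1$ with the concave function $g_\beta(x)=\beta(1-d_{z_0}^2(x)/R^2)_+$, using $\inf_{B(z_0,\delta_1 R)}u\leq 1$ to force $\beta\leq c$, and then reading off from $R^\sigma\mathcal{M}^-_{\mathcal{L}_0}u(x_1)\leq\varepsilon_0$ the \emph{integral} bound
\begin{equation*}
(2-\sigma)\int_M\frac{(u(z)-c)_+}{\mu_g(B(x_1,d_{x_1}(z)))\,d_{x_1}(z)^\sigma}\,\d V(z)\leq C R^{-\sigma},
\end{equation*}
which is then transferred to kernels centered near $x_0$ via the kernel-comparison inequality $\mu_g(B(x_1,d_{x_1}(z)))d_{x_1}(z)^\sigma / \bigl(\mu_g(B(x,d_x(z)))d_x(z)^\sigma\bigr)\leq C(R/(\theta r))^{n+\sigma}$. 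Without this lower touching step your growth lemma cannot be closed: the sum you want to be ``strictly less than $M_k$'' contains a term with no uniform bound. The claimed estimate ``$\sim M_k^{1-\varepsilon}r_k^{-\sigma}$'' for that term is not justified. This is the missing idea, and it is precisely the part of the argument that makes the nonlocal Harnack inequality nontrivial even after the $L^\varepsilon$-estimate and the barrier are in hand.

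A secondary issue: the statement of your growth lemma and its justification via ``evaluate $\mathcal{M}^+_{\mathcal{L}_0}(u-\eta M_k v)$ at $x_k$'' is too vague to verify. In the paper's version, the conclusion is not a pointwise growth $u(x_{k+1})\geq 2M_k$ but a measure estimate on $\{u\leq u(x_0)/2\}$ inside a small cube $Q_2$, obtained by applying the $L^\varepsilon$-estimate at the rescaled scale to the function $w$. If you want to run an iteration, you must at each step re-establish a measure bound of this form, and before that, supply the touching-paraboloid tail estimate described above; otherwise the argument has a genuine gap.
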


\begin{proof}
Let $\varepsilon$ and $\varepsilon_0$ be the constants as in \Cref{lem:L-eps} and let $\gamma = n/\varepsilon$. Let us consider the minimal value of $t > 0$ such that 
\begin{equation} \label{eq:h_t}
u(x) \leq h_t(x) := t \left( 1 - \frac{d_{z_0}(x)}{\delta_1 R} \right)^{-\gamma} \quad \text{for all} ~ x \in B(z_0, \delta_1 R).
\end{equation}
Then by \eqref{eq:h_t}, we have $\sup_{B(z_0, \delta_1 R/4)} u \leq t (3/4)^{-\gamma}$, from which we can conclude the theorem once we show that $t \leq C$ for some universal constant.

There exists a point $x_0 \in B(z_0, \delta_1 R)$ satisfying $u(x_0) = h_t(x_0)$. Let $d = \delta_1 R - d_{z_0}(x_0)$, $r = d/2$, and $A = \lbrace u > u(x_0)/2 \rbrace$, then we have $u(x_0) = h_t(x_0) = t (\delta_1 R/d)^\gamma$. Let $Q_1$ be the unique dyadic cube of generation $k_R$ that contains the point $x_0$, which clearly satisfies $d(z_0, Q_1) \leq \delta_1 R$. Then, by applying \Cref{lem:L-eps} to $u$ with $Q_1$, we obtain
\begin{equation} \label{eq:Q_1}
\mu_g(A \cap Q_1) \leq C \left( \frac{u(x_0)}{2} \right)^{-\varepsilon} \mu_g(Q_1) \leq C t^{-\varepsilon} \left( \frac{r}{R} \right)^n \mu_g(Q_1).
\end{equation}
We will show that there is a small constant $\theta > 0$ such that
\begin{equation} \label{eq:ball_theta}
\mu_g(\lbrace u \leq u(x_0) / 2 \rbrace \cap Q_2) \leq \frac{1}{2} \mu_g(Q_2),
\end{equation}
where $Q_2 \subset Q_1$ is the dyadic cube of generation $k_{\theta r/14}$ containing the point $x_0$, provided that $t$ is sufficiently large. Recalling that $Q_2$ contains some ball $B(z, c_1 \delta_0^{k_{\theta r/14}})$, we have from \eqref{eq:Q_1} and \eqref{eq:VD} that
\begin{equation*}
\mu_g(A \cap Q_2) \leq \mu_g(A \cap Q_1) \leq \frac{C}{t^\varepsilon} \left( \frac{r}{R} \right)^n \mu_g(B(z, c_2 \delta_0^{k_R})) \leq \frac{C}{t^\varepsilon} \mu_g(B(z, c_1 \delta_0^{k_{r\theta/14}})) \leq \frac{C}{t^\varepsilon} \mu_g(Q_2).
\end{equation*}
However, when $t$ is large, we also obtain
\begin{equation*}
\mu_g(A \cap Q_2) < \frac{1}{2} \mu_g(Q_2),
\end{equation*}
which contradicts to \eqref{eq:ball_theta}. Therefore, the rest of the proof is dedicated to proving \eqref{eq:ball_theta}.

For every $x \in B(x_0, \theta r)$, we have
\begin{equation*}
u(x) \leq h_t(x) \leq t \left( \frac{d-\theta r}{\delta_1 R} \right)^{-\gamma} = \left( 1- \frac{\theta}{2} \right)^{-\gamma} u(x_0).
\end{equation*}
Let us define the functions
\begin{equation*}
v(x) := \left( 1- \frac{\theta}{2} \right)^{-\gamma} u(x_0) - u(x)
\end{equation*}
and $w := v_+$. We will apply \Cref{lem:L-eps} to $w$ in $B(x_0, 7(\theta r/14))$. For $x \in B(x_0, 7(\theta r/14))$, since $v$ is nonnegative in $B(x_0, \theta r)$, we have
\begin{align} \label{eq:Mw}
\begin{split}
\mathcal{M}^-_{\mathcal{L}_0} w(x)
&\leq \mathcal{M}_{\mathcal{L}_0}^- v(x) + \mathcal{M}_{\mathcal{L}_0}^+ v_-(x) \\
&\leq - \mathcal{M}^+_{\mathcal{L}_0} u(x) + \Lambda (2-\sigma) \int_{M \setminus B(x_0, \theta r)} \frac{v_-(z)}{\mu_g(B(x, d_x(z))) d_x(z)^\sigma} \, \d V(z) \\
&\leq R^{-\sigma} \varepsilon_0 + \Lambda (2-\sigma) \int_{M \setminus B(x_0, \theta r)} \frac{(u(z) - (1-\theta/2)^{-\gamma} u(x_0))_+}{\mu_g(B(x, d_x(z))) d_x(z)^\sigma} \, \d V(z).
\end{split}
\end{align}

Let us define a function
\begin{equation*}
g_\beta(x) := \beta \left( 1- \frac{d_{z_0}(x)^2}{R^2} \right)_+,
\end{equation*}
and consider the largest number $\beta > 0$ such that $u \geq g_\beta$. From the assumption $\inf_{B(z_0, \delta_1 R)} u \leq 1$, we have $(1-\delta_1^2) \beta \leq 1$. Let $x_1 \in B(z_0, R)$ be a point where $u(x_1) = g_\beta(x_1)$. Then, since
\begin{align*}
&(2-\sigma) \mathrm{p.v.} \int_M (u(z) - u(x_1))_- \nu_{x_1}(z) \, \d V(z) \\
&\leq (2-\sigma) \mathrm{p.v.} \int_M (g_\beta(z) - g_\beta(x_1))_- \nu_{x_1}(z) \, \d V(z) \leq C R^{-\sigma},
\end{align*}
we obtain that
\begin{align*}
\varepsilon_0
&\geq R^\sigma \mathcal{M}^-_{\mathcal{L}_0} u(x_1) \\
&= R^\sigma \inf \left( (2-\sigma) \mathrm{p.v.} \int_M \big( (u(z) - u(x_1))_+ - (u(z) - u(x_1))_- \big) \nu_{x_1}(z) \, \d V(z) \right) \\
&\geq R^\sigma \lambda (2-\sigma) \mathrm{p.v.} \int_M \frac{(u(z) - u(x_1))_+}{\mu_g(B(x_1, d_{x_1}(z))) d_{x_1}(z)^\sigma} \, \d V(z) - C.
\end{align*}
It follows from $u(x_1) \leq \beta \leq 1/(1-\delta_1^2) =: c$ that
\begin{align} \label{eq:u-c}
\begin{split}
&(2-\sigma) \mathrm{p.v.} \int_M \frac{(u(z) - c)_+}{\mu_g(B(x_1, d_{x_1}(z))) d_{x_1}(z)^\sigma} \, \d V(z) \\
&\leq (2-\sigma) \mathrm{p.v.} \int_M \frac{(u(z) - u(x_1))_+}{\mu_g(B(x_1, d_{x_1}(z))) d_{x_1}(z)^\sigma} \, \d V(z) \leq CR^{-\sigma}.
\end{split}
\end{align}

Let us now estimate the integral in \eqref{eq:Mw} by using \eqref{eq:u-c}. If $u(x_0) \leq c$, then we have $t = u(x_0) (\delta_1 R / d)^{-\gamma} \leq c \delta_1^{-\gamma}$ and we are done. Otherwise, we obtain that
\begin{align*}
&\mathcal{M}^-_{\mathcal{L}_0} w(x) \\
&\leq R^{-\sigma} \varepsilon_0 + \Lambda (2-\sigma) \int_{M \setminus B(x_0, \theta r)} \frac{(u(z) - c)_+}{\mu_g(B(x, d_x(z))) d_x(z)^\sigma} \, \d V(z) \\
&= R^{-\sigma} \varepsilon_0 + \Lambda (2-\sigma) \int_{M \setminus B(x_0, \theta r)} \frac{(u(z) - c)_+}{\mu_g(B(x_1, d_{x_1}(z))) d_{x_1}(z)^\sigma} \frac{\mu_g(B(x_1, d_{x_1}(z))) d_{x_1}(z)^\sigma}{\mu_g(B(x, d_x(z))) d_x(z)^\sigma} \, \d V(z).
\end{align*}
For $x \in B(x_0, \theta r/2)$, $x_1 \in B(z_0, R)$, and $z \in M \setminus B(x_0, \theta r)$, we have
\begin{equation*}
\frac{d_{x_1}(z)}{d_x(z)} \leq 1 + \frac{d(x, x_1)}{d(x, z)} \leq 1 + \frac{d(x, x_0) + d(x_0, z_0) + d(z_0, x_1)}{d(x_0, z) - d(x_0, x)} \leq 1 + \frac{\theta r/2 + \delta_1 R + R}{\theta r/2} \leq C \frac{R}{\theta r},
\end{equation*}
and hence, by \eqref{eq:VD},
\begin{equation*}
\frac{\mu_g(B(x_1, d_{x_1}(z))) d_{x_1}(z)^\sigma}{\mu_g(B(x, d_x(z))) d_x(z)^\sigma} \leq C \left( \frac{R}{\theta r} \right)^{n+\sigma}.
\end{equation*}
Therefore, we have shown that
\begin{equation*}
\left( \frac{\theta r}{14} \right)^\sigma \mathcal{M}^-_{\mathcal{L}_0} w \leq C \left( \frac{R}{\theta r} \right)^n
\end{equation*}
in $B(x_0, 7(\theta r/14))$.

Let $Q_2 \subset Q_1$ be the dyadic cube of generation $k_{\theta r /14}$ containing the point $x_0$. Then by \Cref{lem:L-eps}, we have
\begin{align*}
\mu_g(\lbrace u < u(x_0)/2 \rbrace \cap Q_2)
&= \mu_g(\lbrace w > \left( \left( 1-\theta/2 \right)^{-\gamma} - 1/2 \right) u(x_0) \rbrace \cap Q_2) \\
&\leq \frac{C \mu_g(Q_2)}{\left( \left( 1-\theta/2 \right)^{-\gamma} - 1/2 \right)^\varepsilon u(x_0)^\varepsilon} \left( \inf_{B(x_0, \delta_1 \theta r/14)} w + C \left( \frac{R}{\theta r} \right)^n \right)^\varepsilon.
\end{align*}
We can make the quantity $(1-\theta/2)^{-\gamma} - 1/2$ bounded away from 0 by taking $\theta > 0$ sufficiently small. Recalling that $u(x_0) = t (\delta_1 R/2r)^\gamma$, $w(x_0) = ((1-\theta/2)^{-\gamma}-1) u(x_0)$, and $\gamma = n/\varepsilon$, we obtain
\begin{equation*}
\mu_g(\lbrace u < u(x_0)/2 \rbrace \cap Q_2) \leq C \mu_g(Q_2) \left( ((1-\theta/2)^{-\gamma}-1)^\varepsilon + t^{-\varepsilon} \theta^{-n\varepsilon} \right).
\end{equation*}
We choose a constant $\theta > 0$ sufficiently small so that
\begin{equation*}
C \left((1 - \theta/2)^{-\gamma} - 1 \right)^\varepsilon \leq \frac{1}{4}.
\end{equation*}
If $t > 0$ is sufficiently large so that $C t^{-\varepsilon} \theta^{-n\varepsilon} \leq 1/4$, then we arrive at \eqref{eq:ball_theta}. Therefore, $t$ is uniformly bounded and the desired result follows.
\end{proof}

\section{H\"older estimates} \label{sec:Holder}

This section is devoted to the proof of \Cref{thm:Holder}, which will follow from \Cref{lem:Holder_0}.

\begin{lemma} \label{lem:Holder_0}
Let $\sigma_0 \in (0,2)$ and assume $\sigma \in [\sigma_0, 2)$. For $z_0 \in M$, let $K$ be the supremum of the sectional curvatures in $B_{\mathrm{inj}(z_0)}(z_0)$ let $R > 0$ be such that $15R < \mathrm{inj} (z_0) \land \frac{\pi}{\sqrt{K}}$. If $u \in C^2(B(z_0, 7R))$ is a function such that $\ve u \ve \leq \frac{1}{2}$ in $B(z_0,7R)$ and
\begin{equation*}
R^\sigma \mathcal{M}^+_{\mathcal{L}_0} u \geq - \varepsilon_0 \quad \text{and} \quad R^\sigma \mathcal{M}^-_{\mathcal{L}_0} u \leq \varepsilon_0 \quad \text{in} ~ B(z_0, 7R),
\end{equation*}
then $u \in C^\alpha$ at $z_0$ with an estimate
\begin{equation*}
\ve u(x) - u(z_0) \ve \leq CR^{-\alpha} d(x, z_0)^\alpha,
\end{equation*}
where $\alpha \in (0, 1)$ and $C > 0$ are constants depending only on $n$, $\lambda$, $\Lambda$, $a_1$, $a_2$, and $\sigma_0$.
\end{lemma}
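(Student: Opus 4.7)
The plan is to prove Hölder continuity at $z_0$ by the classical oscillation-decay iteration, driven by the weak Harnack inequality \Cref{thm:WHI}. I will construct universal constants $\alpha \in (0,1)$ and $\tau \in (0, 1/8]$ together with sequences $m_k \leq M_k$ for $k \geq 0$ such that
\begin{equation*}
m_k \leq u \leq M_k \ \text{in} \ B(z_0, \tau^k R), \qquad M_k - m_k = \tau^{\alpha k},
\end{equation*}
with the base case $m_0 = -1/2$, $M_0 = 1/2$ furnished by $\ve u \ve \leq 1/2$ on $B(z_0, 7R)$. Once such sequences are produced, for any $x \in B(z_0, R)$ choosing $k$ with $\tau^{k+1} R \leq d(x, z_0) < \tau^k R$ yields $\ve u(x) - u(z_0) \ve \leq M_k - m_k = \tau^{\alpha k} \leq C R^{-\alpha} d(x, z_0)^\alpha$, which is exactly the desired estimate.

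For the inductive step from $k$ to $k+1$, write $R_k = \tau^k R$ and consider the normalized function
\begin{equation*}
v(x) := \frac{2}{M_k - m_k}\left( u(x) - \frac{M_k + m_k}{2} \right),
\end{equation*}
so that $\ve v \ve \leq 1$ on $B(z_0, R_k)$, with the weaker bound $\ve v \ve \leq 2\tau^{-\alpha j} - 1$ on each larger $B(z_0, R_{k-j})$ for $1 \leq j \leq k$, and $\ve v \ve \leq \tau^{-\alpha k}$ on $B(z_0, 7R)$. In $B(z_0, R_{k+1})$ either $\{v \leq 0\}$ or $\{v \geq 0\}$ has at least half the measure; by symmetry suppose the former. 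Let $w := (1 - v)_+$, which is nonnegative on $M$, equals $1 - v$ on $B(z_0, R_k)$, and satisfies $\{w \geq 1\} \cap B(z_0, R_{k+1}) \supset \{v \leq 0\} \cap B(z_0, R_{k+1})$, hence has density at least $1/2$ there. Provided I can show
\begin{equation*}
R_{k+1}^\sigma \mathcal{M}^-_{\mathcal{L}_0} w \leq \varepsilon^\ast \quad \text{in}\ B(z_0, 2R_{k+1})
\end{equation*}
for a sufficiently small universal $\varepsilon^\ast$, applying \Cref{thm:WHI} to $w$ with radius $R_{k+1}$ gives $(1/2)^{1/p} \leq C(\inf_{B(z_0, R_{k+1})} w + \varepsilon^\ast)$, hence $\inf_{B(z_0, R_{k+1})} w \geq \kappa$ for some universal $\kappa > 0$, i.e., $\sup_{B(z_0, R_{k+1})} v \leq 1 - \kappa$. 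Setting $M_{k+1} := M_k - \kappa(M_k - m_k)/2$, $m_{k+1} := M_{k+1} - \tau^{\alpha(k+1)}$, and choosing $\alpha$ small enough that $1 - \kappa/2 \leq \tau^\alpha$, closes the induction; the lower bound at level $k+1$ follows automatically from that of level $k$.

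The crux is the uniform bound on $R_{k+1}^\sigma \mathcal{M}^-_{\mathcal{L}_0} w$. Decomposing $w = (1-v) + \bigl( (1-v)_+ - (1-v) \bigr)$, the first piece contributes $\mathcal{M}^-_{\mathcal{L}_0}(1-v) = -\mathcal{M}^+_{\mathcal{L}_0} v$, which by the rescaled extremal hypothesis on $u$ is at most $2\varepsilon_0(M_k - m_k)^{-1} R^{-\sigma}$, producing $2\varepsilon_0 \tau^{(\sigma-\alpha)k}$ after multiplication by $R_{k+1}^\sigma$ and hence harmless for $\alpha \leq \sigma_0$. The second (truncation) piece is nonnegative, supported in $M \setminus B(z_0, R_k)$, and contributes to $\mathcal{M}^-_{\mathcal{L}_0} w$ an error of the dyadic form
\begin{equation*}
\Lambda (2-\sigma) R_{k+1}^\sigma \sum_{j \geq 0} \tau^{-\alpha j} \int_{B(z_0, R_{k-j}) \setminus B(z_0, R_{k-j-1})} \frac{\d V(z)}{\mu_g(B(x, d_x(z))) d_x(z)^\sigma}
\end{equation*}
for $x \in B(z_0, 2R_{k+1})$. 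Using \eqref{eq:comparability} to move the reference ball from $x$ to $z_0$ and then applying \eqref{eq:VD} together with the dyadic estimate in \Cref{lem:integrability}, each annulus contributes at most $C \tau^{\sigma j} R_{k-j}^{-\sigma}$, so the series is bounded by $C \sum_{j \geq 0} \tau^{(\sigma - \alpha) j}$, which converges and can be made arbitrarily small by choosing $\alpha$ small relative to $\sigma_0$.

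The main obstacle will be this tail bookkeeping and the simultaneous balancing of $\alpha$: it has to be small enough that the geometric series $\sum \tau^{(\sigma - \alpha)j}$ is below the threshold $\varepsilon^\ast$ needed for the weak Harnack step, yet large enough that the iteration still produces a positive exponent, compatibly with the constraint $1 - \kappa/2 \leq \tau^\alpha$. All three constraints depend only on $n, \lambda, \Lambda, a_1, a_2, \sigma_0$, and the $(2-\sigma)$ normalization built into $\mathcal{L}_0$ combined with \Cref{lem:integrability} is exactly what keeps $\alpha$ and $C$ uniform over $\sigma \in [\sigma_0, 2)$, in line with the robustness stated in \Cref{thm:Holder}.
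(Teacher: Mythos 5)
Your overall strategy — an oscillation-decay iteration on shrinking balls $B(z_0,\tau^k R)$, normalizing $u$ at each scale, truncating, and driving the decay with a weak-Harnack-type density estimate — is the same as the paper's. The only organizational difference is that you invoke \Cref{thm:WHI} where the paper applies \Cref{lem:L-eps} directly to the dyadic cube where $\{w \geq 1\}$ has density $1/2$; since \Cref{thm:WHI} is itself a consequence of \Cref{lem:L-eps} by chaining, this is a cosmetic rather than a substantive difference. Your handling of the first (``interior'') piece $\mathcal{M}^-_{\mathcal{L}_0}(1-v) = -\mathcal{M}^+_{\mathcal{L}_0} v$ and the recognition that the truncation $(1-v)_- = (v-1)_+$ lives in $M \setminus B(z_0, R_k)$ are both correct, and the constraints you list for choosing $\alpha$ are the right ones.

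However, there is a genuine gap in the tail estimate. You correctly record the induction bound $\ve v \ve \leq 2\tau^{-\alpha j} - 1$ on the $j$-th annulus, but in the dyadic sum you replace the coefficient $(v-1)_+ \leq 2(\tau^{-\alpha j} - 1)$ by $\tau^{-\alpha j}$, and then claim the resulting series $\sum_{j \geq 0} \tau^{(\sigma - \alpha)j}$ ``can be made arbitrarily small by choosing $\alpha$ small.'' That is false: $\sum_{j \geq 0} \tau^{(\sigma - \alpha)j} = (1 - \tau^{\sigma - \alpha})^{-1} \to (1 - \tau^\sigma)^{-1} > 1$ as $\alpha \to 0$, a fixed positive constant. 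What actually vanishes is the corrected sum
\begin{equation*}
\sum_{j \geq 1} \left(\tau^{-\alpha j} - 1\right)\tau^{\sigma j} = \frac{\tau^{\sigma-\alpha}}{1-\tau^{\sigma-\alpha}} - \frac{\tau^{\sigma}}{1-\tau^{\sigma}} \longrightarrow 0 \quad \text{as } \alpha \to 0,
\end{equation*}
uniformly over $\sigma \in [\sigma_0, 2)$, because of the ``$-1$'' you dropped. This mirrors exactly the paper's computation, where the sum evaluates to $\frac{2^\alpha - 1}{(1-2^{\alpha - \sigma})(1-2^{-\sigma})}$ and the crucial smallness comes from the numerator $2^\alpha - 1$. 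Without the ``$-1$'' the induction step cannot be closed, so you should restore it (you already have the needed pointwise bound on $v$; it is purely a bookkeeping slip in the final summation). You should also note a minor indexing typo: the annuli should be $B(z_0, R_{k-j-1}) \setminus B(z_0, R_{k-j})$, as $R_{k-j-1} > R_{k-j}$ with your convention $R_m = \tau^m R$.
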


\begin{proof}
Let $R_k := 7 \cdot 4^{-k} R$ and $B_k := B(z_0, R_k)$. It is enough to find an increasing sequence $\lbrace m_k \rbrace_{k \geq 0}$ and a decreasing sequence $\lbrace M_k \rbrace_{k \geq 0}$ satisfying $m_k \leq u \leq M_k$ in $B_k$ and $M_k - m_k = 4^{-\alpha k}$. We initially choose $m_0 = - 1/2$ and $M_0 = 1/2$ for the case $k = 0$. Let us assume that we have sequences up to $m_k$ and $M_k$. We want to show that we can continue the sequences by finding $m_{k+1}$ and $M_{k+1}$.

Let $Q_1$ be a dyadic cube of generation $k_{R_{k+1}/7}$ that contains the point $x$. In $Q_1$, either $u > (M_k + m_k)/2$ or $u \leq (M_k + m_k)/2$ in at least half of the points in measure. We suppose that
\begin{equation} \label{eq:Q_1_half}
\mu_g \left( \lbrace u > (M_k + m_k)/2 \rbrace \cap Q_1 \right) \geq \mu_g(Q_1)/2.
\end{equation}
Let us define a function
\begin{equation*}
v(x) := \frac{u(x) - m_k}{(M_k - m_k)/2},
\end{equation*}
then $v \geq 0$ in $B_k$ by the induction hypothesis. For $w := v_+$,
\eqref{eq:Q_1_half} is read as
\begin{equation} \label{eq:Q_1_half_w}
\mu_g( \lbrace w > 1 \rbrace \cap Q_1 ) \geq \mu_g(Q_1) /2.
\end{equation}
To apply \Cref{lem:L-eps} to $w$, we need to estimate $\mathcal{M}^-_{\mathcal{L}_0} w \leq \mathcal{M}^-_{\mathcal{L}_0} v + \mathcal{M}^+_{\mathcal{L}_0} v_-$. We know that $R^\sigma \mathcal{M}^-_{\mathcal{L}_0} v \leq 2\varepsilon_0/(M_k - m_k)$ in $B_{7R}$. For $\mathcal{M}^+_{\mathcal{L}_0} v^-$, we use the bound 
\begin{equation*}
v(z) \geq -2((d_{z_0}(z) / R_k)^\alpha - 1) \quad \text{for} ~ z \in M \setminus B_k,
\end{equation*}
which follows from the definition of $v$ and the properties of sequences $M_k$ and $m_k$. Then we have, for $x \in B(z_0, 3R_{k+1})$,
\begin{align*}
\mathcal{M}^-_{\mathcal{L}_0} w(x) 
&\leq \frac{2\varepsilon_0}{M_k - m_k} R^{-\sigma} + \Lambda(2-\sigma) \int_{M \setminus B_k} \frac{v_-(z)}{\mu_g(B(x, d_x(z))) d_x(z)^\sigma} \, \d V(z) \\
&\leq \frac{2\varepsilon_0}{M_k - m_k} R^{-\sigma} + 2\Lambda (2-\sigma) \int_{M \setminus B(x, R_{k+1})} \frac{(d_{z_0}(z) / R_k)^\alpha - 1}{\mu_g(B(x, d_x(z))) d_x(z)^\sigma} \, \d V(z).
\end{align*}
Note that $d_{z_0}(z) \leq d_x(z) + d_x(z_0) \leq d_x(z) + 3R_{k+1} \leq 4d_x(z)$. Thus, by using \eqref{eq:VD} and assuming $\alpha < \sigma_0$, we obtain that
\begin{align*}
&\int_{M \setminus B(x, R_{k+1})} \frac{(d_{z_0}(z) / R_k)^\alpha - 1}{\mu_g(B(x, d_x(z))) d_x(z)^\sigma} \, \d V(z) \\
&\leq \int_{M \setminus B(x, R_{k+1})} \frac{(d_x(z) / R_{k+1})^\alpha - 1}{\mu_g(B(x, d_x(z))) d_x(z)^\sigma} \, \d V(z) \\
&\leq \sum_{j=0}^\infty \int_{B(x, 2^{j+1} R_{k+1}) \setminus B(x, 2^j R_{k+1})} \frac{2^{(j+1)\alpha} - 1}{\mu_g(B(x, 2^j R_{k+1})) (2^j R_{k+1})^\sigma} \, \d V(z) \\
&\leq \sum_{j=0}^\infty \frac{2^{(j+1)\alpha} - 1}{(2^j R_{k+1})^\sigma} \frac{\mu_g(B(x, 2^{j+1} R_{k+1}))}{\mu_g(B(x, 2^j R_{k+1}))} \\
&\leq \frac{2^n}{R_{k+1}^\sigma} \sum_{j=0}^\infty \left( 2^\alpha 2^{j(\alpha-\sigma)} - 2^{-j\alpha} \right) \\
&= \frac{2^n}{R_{k+1}^\sigma} \frac{2^\alpha - 1}{(1-2^{\alpha-\sigma})(1-2^{-\sigma})} \\
&\leq \frac{2^n}{R_{k+1}^\sigma} \frac{2^\alpha - 1}{(1-2^{\alpha-\sigma_0})(1-2^{-\sigma_0})} =: \frac{c_1(n, \alpha, \sigma_0)}{R_{k+1}^\sigma}.
\end{align*}
The constant $c_1(n, \alpha, \sigma_0)$ can be made arbitrarily small by taking small $\alpha$. We have estimated
\begin{equation*}
\left( \frac{R_{k+1}}{7} \right)^\sigma \mathcal{M}^-_{\mathcal{L}_0} w \leq C \left( \varepsilon_0 + c_1 \right)
\end{equation*}
in $B(x, 7(R_{k+1}/7))$ for $x \in B(z_0, 2R_{k+1})$. Therefore, by \Cref{lem:L-eps} and \eqref{eq:Q_1_half_w}, we have
\begin{align*}
\frac{1}{2} \mu_g(Q_1) \leq \mu_g( \lbrace w > 1 \rbrace \cap Q_1) \leq C \mu_g(Q_1) \left( w(x) + C(\varepsilon_0 + c_1) \right)^\varepsilon,
\end{align*}
or equivalently, 
\begin{equation*}
\theta \leq w(x) + C(\varepsilon_0 + c_1)
\end{equation*}
for some universal constant $\theta > 0$. We take $\varepsilon_0$ and $\alpha$ sufficiently small so that $C\varepsilon_0 < \theta/4$ and $Cc_1 < \theta /4$, then we have $w \geq \theta /2$ in $B(z_0, 2R_{k+1})$. Thus, if we set $M_{k+1} = M_k$ and $m_{k+1} = M_k - 4^{-\alpha(k+1)}$, then 
\begin{equation*}
M_{k+1} \geq u \geq m_k + \frac{M_k - m_k}{4} \theta = M_k - \left(1 - \frac{\theta}{4} \right) 4^{-\alpha k} \geq m_{k+1}
\end{equation*}
in $B_{k+1}$.

On the other hand, if $\mu_g( \lbrace u \leq (M_k + m_k)/2 \rbrace \cap Q_1) \geq \mu_g(Q_1)/2$, we define
\begin{equation*}
v(x) = \frac{M_k - u(x)}{(M_k - m_k)/2}
\end{equation*}
and continue in the same way using that $R^\sigma \mathcal{M}^+_{\mathcal{L}_0} u \geq - \varepsilon_0$.
\end{proof}

\section*{Acknowledgement}

The research of Jongmyeong Kim is supported by the National Research Foundation of Korea(NRF) grant funded by the Korea government(MSIT)(2016R1E1A1A01941893). The research of Ki-Ahm Lee is supported by the National Research Foundation of Korea(NRF) grant funded by the Korea government(MSIP): NRF-2020R1A2C1A01006256.


\begin{thebibliography}{10}

\bibitem{AOCM18}
D.~Alonso-Or\'{a}n, A.~C\'{o}rdoba, and A.~D. Mart\'{\i}nez.
\newblock Integral representation for fractional {L}aplace--{B}eltrami operators.
\newblock {\em Adv. Math.}, 328:436--445, 2018.

\bibitem{AE00}
D.~Applebaum and A.~Estrade.
\newblock Isotropic {L}\'{e}vy processes on {R}iemannian manifolds.
\newblock {\em Ann. Probab.}, 28(1):166--184, 2000.

\bibitem{BGS15}
V.~Banica, M.~d.~M. Gonz\'{a}lez, and M.~S\'{a}ez.
\newblock Some constructions for the fractional {L}aplacian on noncompact
  manifolds.
\newblock {\em Rev. Mat. Iberoam.}, 31(2):681--712, 2015.

\bibitem{BM20}
A.~Biswas and M.~Modasiya.
\newblock Regularity results of nonlinear perturbed stable-like operators.
\newblock {\em arXiv preprint arXiv:2004.06996}, 2020.

\bibitem{Cab97}
X.~Cabr\'{e}.
\newblock Nondivergent elliptic equations on manifolds with nonnegative curvature.
\newblock {\em Comm. Pure Appl. Math.}, 50(7):623--665, 1997.

\bibitem{CS07}
L.~Caffarelli and L.~Silvestre.
\newblock An extension problem related to the fractional {L}aplacian.
\newblock {\em Comm. Partial Differential Equations}, 32(7-9):1245--1260, 2007.

\bibitem{CS09}
L.~Caffarelli and L.~Silvestre.
\newblock Regularity theory for fully nonlinear integro-differential equations.
\newblock {\em Comm. Pure Appl. Math.}, 62(5):597--638, 2009.

\bibitem{CKW19}
Z.-Q. Chen, T.~Kumagai, and J.~Wang.
\newblock Elliptic {H}arnack inequalities for symmetric non-local {D}irichlet forms.
\newblock {\em J. Math. Pures Appl. (9)}, 125:1--42, 2019.

\bibitem{CY75}
S.~Y. Cheng and S.~T. Yau.
\newblock Differential equations on {R}iemannian manifolds and their geometric applications.
\newblock {\em Comm. Pure Appl. Math.}, 28(3):333--354, 1975.

\bibitem{CLN06}
B.~Chow, P.~Lu, and L.~Ni.
\newblock {\em Hamilton's {R}icci flow}, volume~77 of {\em Graduate Studies in Mathematics}.
\newblock American Mathematical Society, Providence, RI; Science Press Beijing, New York, 2006.

\bibitem{Chr90}
M.~Christ.
\newblock A {$T(b)$} theorem with remarks on analytic capacity and the {C}auchy integral.
\newblock {\em Colloq. Math.}, 60/61(2):601--628, 1990.

\bibitem{Gig12}
N.~Gigli.
\newblock Second order analysis on {$(\mathscr{P}_2(M),W_2)$}.
\newblock {\em Mem. Amer. Math. Soc.}, 216(1018):xii+154, 2012.

\bibitem{GS12}
N.~Guillen and R.~W. Schwab.
\newblock Aleksandrov--{B}akelman--{P}ucci type estimates for integro-differential equations.
\newblock {\em Arch. Ration. Mech. Anal.}, 206(1):111--157, 2012.

\bibitem{Jos17}
J.~Jost.
\newblock {\em Riemannian geometry and geometric analysis}.
\newblock Universitext. Springer, Cham, seventh edition, 2017.

\bibitem{KL20}
M.~Kim and K.-A. Lee.
\newblock Regularity for fully nonlinear integro-differential operators with kernels of variable orders.
\newblock {\em Nonlinear Anal.}, 193:111312, 2020.

\bibitem{Kim04}
S.~Kim.
\newblock Harnack inequality for nondivergent elliptic operators on {R}iemannian manifolds.
\newblock {\em Pacific J. Math.}, 213(2):281--293, 2004.

\bibitem{KKL14}
S.~Kim, S.~Kim, and K.-A. Lee.
\newblock Harnack inequality for nondivergent parabolic operators on {R}iemannian manifolds.
\newblock {\em Calc. Var. Partial Differential Equations}, 49(1-2):669--706, 2014.

\bibitem{KKL16}
S.~Kim, Y.-C. Kim, and K.-A. Lee.
\newblock Regularity for fully nonlinear integro-differential operators with regularly varying kernels.
\newblock {\em Potential Anal.}, 44(4):673--705, 2016.

\bibitem{KL14}
S.~Kim and K.-A. Lee.
\newblock Parabolic {H}arnack inequality of viscosity solutions on {R}iemannian manifolds.
\newblock {\em J. Funct. Anal.}, 267(7):2152--2198, 2014.

\bibitem{Pet16}
P.~Petersen.
\newblock {\em Riemannian geometry}, volume 171 of {\em Graduate Texts in Mathematics}.
\newblock Springer, Cham, third edition, 2016.

\bibitem{SC92}
L.~Saloff-Coste.
\newblock Uniformly elliptic operators on {R}iemannian manifolds.
\newblock {\em J. Differential Geom.}, 36(2):417--450, 1992.

\bibitem{Vil09}
C.~Villani.
\newblock {\em Optimal transport}, volume 338 of {\em Grundlehren der Mathematischen Wissenschaften [Fundamental Principles of Mathematical Sciences]}.
\newblock Springer-Verlag, Berlin, 2009.
\newblock Old and new.

\bibitem{WZ13}
Y.~Wang and X.~Zhang.
\newblock An {A}lexandroff--{B}akelman--{P}ucci estimate on {R}iemannian manifolds.
\newblock {\em Adv. Math.}, 232:499--512, 2013.

\bibitem{Xu18}
S.~Xu.
\newblock Local estimate on convexity radius and decay of injectivity radius in a {R}iemannian manifold.
\newblock {\em Commun. Contemp. Math.}, 20(6):1750060, 19, 2018.

\bibitem{Yau75}
S.~T. Yau.
\newblock Harmonic functions on complete {R}iemannian manifolds.
\newblock {\em Comm. Pure Appl. Math.}, 28:201--228, 1975.

\end{thebibliography}
\end{document}